%-----------------------------------------------------------------------
% Beginning of article.tex
%-----------------------------------------------------------------------
%
% AMS-LaTeX 1.2 sample file for book proceedings, based on amsproc.cls.
%
% Replace amsproc by the documentclass for the target series, e.g. pspum-l.
%
\documentclass{amsproc}
\usepackage{tikz}
\usepackage[colorlinks, citecolor=blue]{hyperref}

\newtheorem{theorem}{Theorem}[section]
\newtheorem{lemma}[theorem]{Lemma}

\theoremstyle{definition}
\newtheorem{definition}[theorem]{Definition}
\newtheorem{example}[theorem]{Example}

\newtheorem{claim}[theorem]{Claim}
\newtheorem{question}[theorem]{Question}
\newtheorem{corollary}[theorem]{Corollary}
\newtheorem{assumption}[theorem]{Assumption}
\newtheorem{proposition}[theorem]{Proposition}

\theoremstyle{remark}
\newtheorem{remark}[theorem]{Remark}

\numberwithin{equation}{section}

%    Absolute value notation

%    Blank box placeholder for figures (to avoid requiring any
%    particular graphics capabilities for printing this document).

%%%%%%%%%%%our macros

\newcommand{\Ev}{\mathbb{E}}
\newcommand{\E}{\mathbf{E}}

\newcommand{\h}{\mathbf{H}}

\newcommand{\N}{\mathbf{N}}
\newcommand{\Z}{\mathbf{Z}}

\newcommand{\Pv}{\mathbb{P}}

\newcommand{\R}{\mathbf{R}}

\newcommand{\CF}{\mathcal {F}}
\newcommand{\CG}{G}

\newcommand{\CS}{\mathcal {S}}
\newcommand{\CT}{\mathcal {T}}

\newcommand{\CX}{\mathcal {X}}

\newcommand{\diam}{{\rm diam}}

\newcommand{\one}{\mathbf{1}}

\newcommand{\Hh}{H}

\newcommand*{\pid}{\pi^{\diamond}}

% Juli's macros

\newcommand*{\la}{\lambda}

\newcommand*{\ve}{\varepsilon}

\newcommand*{\un}[1]{\underline{#1}}

\newcommand*{\be}{\begin{equation}}
\newcommand*{\ee}{\end{equation}}
\newcommand*{\ba}{\begin{aligned}}
\newcommand*{\ea}{\end{aligned}}
\newcommand*{\barr}{\begin{array}{c}}
\newcommand*{\earr}{\end{array}}

\newcommand*{\Cov}{{\text{\bf Cov}}}
\newcommand*{\fl}[1]{\lfloor{#1}\rfloor}

\newcommand*{\trel}{t_{{\rm rel}}}

\newcommand*{\tcov}{t_{{\rm cov}}}
\newcommand*{\thit}{t_{{\rm hit}}}
\newcommand*{\tmix}{t_{{\rm mix}}}

\newcommand*{\TV}{{\rm TV}}

%%%%%%%%%%%%%%%%%%%
%%%%%%%%%%%%%%%%%%%

\renewcommand{\diam}{{\rm diam}}

\renewcommand{\and}{\hbox{ {\rm and} }}
\newcommand{\ldist}{\rho}

\newcommand{\diag}{\operatorname{diag}}
\newcommand{\tr}{\operatorname{tr}}
\newcommand*{\sss}{\scriptscriptstyle}

\begin{document}

\title{Topics in Markov chains: mixing and escape rate}

%    Information for first author
\author{J{\'u}lia Komj{\'a}thy}
%    Address of record for the research reported here
\address{Department of Stochastics, Budapest University of Technology and Economics, Budapest, Hungary}
%    Current address
\curraddr{Eindhoven University of Technology, Eindhoven, The Netherlands}

\email{j.komjathy@tue.nl}
%    \thanks will become a 1st page footnote.
\thanks{J. Komj\'athy was supported by the grant
 KTIA-OTKA  $\#$ CNK 77778, funded by the Hungarian National Development Agency (NF\"U)  from a
source provided by KTIA}

%    Information for second author
\author{Yuval Peres}
\address{Microsoft Research}
\email{peres@microsoft.com}

%    General info
\subjclass{Primary 60J10, 160D05, 37A25}
\date{September 25, 2013 and, in revised form, January 16, 2015.}

\keywords{Random walk, generalized lamplighter walk, wreath product, mixing time, relaxation time, Varopolous-Carne long range estimates.}

\begin{abstract}
These are the notes for the minicourse on Markov chains delivered at the Saint Petersburg Summer School, June 2012. The main emphasis is on methods for estimating mixing times (for finite chains) and escape rates (for infinite chains).
Lamplighter groups are key examples in both topics and the Varopolous-Carne long range estimate is useful in both settings.
\end{abstract}

\maketitle

\section{Preliminaries}
We start with preliminary notions necessary for the analysis of mixing and relaxation time of Markov chains. For much more on this topic see the books \cite{AF02, LPW08}.
\subsection{Total variation distance and coupling}
We start with the definition of total variation distance and coupling of two probability measures:
\begin{definition}\label{def::tv}
Let $\CS$ be a state space, and $\mu$ and $\nu$ be two probability measures defined on $\CS$.
Then the {\bf total variation distance} between  $\mu$ and $\nu$ is defined as
\[ \|\mu - \nu\|_{\TV} = \max_{A\subset \CS} |\mu(A) - \nu(A)|. \]
\end{definition}

\begin{definition}
A {\bf coupling} of two probability measures $\mu$ and $\nu$ on $\CS$ is a pair of random variables $(X,Y)$ having joint distribution $q$ on $\CS \times \CS$ such that the marginal distributions are $\Pv[X=x]=\sum_{y\in \CS} q(x,y)=\mu(x)$ and $\Pv[Y=y]=\sum_{x\in \CS}q(x,y)=\nu(y)$ for every $x,y\in \CS.$
\end{definition}
Then, the followings give equivalent characterizations of the total variation distance $\|\mu - \nu\|_{\TV}$:
\begin{align}  &  \max_{A\subset \CS} |\mu(A) - \nu(A)|\label{eq::tv1}\\
& \frac12 \|\mu-\nu\|_1= \frac12 \sum_{x\in \CS} |\mu(x)-\nu(x)| \label{eq::tv2}\\
& \sum_{x\in \CS: \mu(x)>\nu(x)} \big(\mu(x)-\nu(x)\big)\label{eq::tv3} \\
& \inf\left\{\Pv[X \neq Y] : (X, Y) \text{ is a coupling of }\mu \text{ and }\nu \right\} \label{eq::tv4}
\end{align}

\begin{proof}
It is intuitively clear that the set $B:= \{x: \mu(x)\ge \nu(x) \}$ or its complement maximizes the right hand side in Definition \ref{def::tv}. To give a formal proof, take $A\subset \CS$.
 From the definition of $B$ it follows that
\be\label{eq::tvproof1} \mu(A)-\nu(A)\le \mu(A \cap B) - \nu(A \cap B) \le \mu(B)-\nu(B). \ee
This proves that $\eqref{eq::tv1}\le \eqref{eq::tv3}$. But, if we take $A=B$, then the maximum is taken, i.e. $\eqref{eq::tv1}= \eqref{eq::tv3}.$
By the same reasoning, with $B^c:=\CS \setminus B$ we also have
\be\label{eq::tvproof2} \nu(A)-\mu(A)\le \nu(A \cap B^c) - \mu(A \cap B^c) \le \nu(B^c)-\mu(B^c). \ee
Note that since $\mu(B^c)=1-\mu(B),\  \nu(B^c)=1-\nu(B)$ the right hand side of \eqref{eq::tvproof1} and $\eqref{eq::tvproof2}$ coincide, thus yielding
\[  \max_{A\subset \CS} |\mu(A) - \nu(A)| = \frac12 \left(\mu(B)-\nu(B) + \nu(B^c)-\mu(B^c) \right) = \frac12 \sum_{x\in \CS} |\mu(x)-\nu(x)|, \]
proving \eqref{eq::tv1}=\eqref{eq::tv2}.

To see that \eqref{eq::tv1} $\le$ \eqref{eq::tv4}, we write
\[ \ba \mu(A)-\nu(A) &= \Pv[X\in A] - \Pv[Y\in A] \\
&\le \Pv[X\in A, Y \notin A] \\
&\le \Pv[X \neq Y]. \ea \]
For the other direction we construct a coupling for which the infimum is attained. Intuitively, what we do is pack as much mass into the diagonal $q(x,x)$ as we can, such that we still maintain the correct marginal measures.
More formally, let us define
\[ \ba q(x,x)&:= \min\{\mu(x), \nu(x)\}\\
q(x,y)&:= 0 \mbox{ if } q(x,x)=\mu(x) \mbox{ or } q(y,y)=\nu(y)\\
q(x,y)&=\frac{(\mu(x)-\nu(x))(\nu(y)-\mu(y))}{1-\sum_z q(z,z)} \mbox{ if } q(x,x)=\nu(x) \mbox{ and } q(y,y)=\mu(y).
 \ea\]
 Intuitively, we put the maximal possible weight in the diagonal of $q$, (which is $\min\{ \mu(x), \nu(x)\}$ and then we put zeros in the corresponding column or row, depending on the minimum being $\mu(x)$ or $\nu(x)$. Finally, we fill the rest out with conditionally independent choice, i.e. on $B\times B^c$ we distribute $(\mu(x)-\nu(x))\cdot (\nu(y)-\mu(y))>0$ with the normalizing factor  $1-\sum_z q(z,z)$. Mind that this is not the only way of doing the coupling. To check that the marginals are correct is left to the reader.
With this particular coupling, \eqref{eq::tv4} becomes
\[ \ba \eqref{eq::tv4}\le \Pv(X\neq Y) &= 1-\sum_x q(x,x) = 1- \sum_x\min\{ \mu(x),\nu(x) \} \\
&=\sum_x \mu(x) -\left(\sum_{x: \mu(x)>\nu(x)} \nu(x) + \sum_{x: \mu(x) \leq \nu(x)} \mu(x)\right)\\
&=\sum_{x: \mu(x)>\nu(x)} \left[\mu(x)-\nu(x)\right]=\eqref{eq::tv3}.
\ea \]
With this we have \eqref{eq::tv4}$\le$ \eqref{eq::tv3}=\eqref{eq::tv1}, finishing the proof.
\end{proof}

\subsection{Mixing in total variation distance}
Let $X_t$ be a Markov chain on state space $\CS$ with transition matrix $P$, and stationary measure $\pi$ on $\CS$. That is, $\pi P = \pi$.
If $P$ is irreducible and aperiodic, then the measure $\mu_t(y)=P^t(x,y)$ is converging to the stationary measure exponentially fast, i.e. there exists an $\alpha\in (0,1)$ such that
\[ \|P^t(x,.)-\pi(.) \|_{TV} \le C \alpha^t. \]
These asymptotics hold for a single chain as the time $t$ tends to infinity. However, we are rather interested in the finite time behavior of a sequence of Markov chains, i.e. how long one has to run the Markov chain as a function of $|\CS|$, to get $\ve$-close to stationary measure, for fixed $\ve$.

Thus, let us define
\be\label{def::dxt}  d_x(t):= \| P^t(x,\cdot)-\pi(\cdot)\|_{\text{TV}}; \quad d(t):= \max_{x\in \CS} d_x(t). \ee
Then, the $\ve$-mixing time of a Markov Chain on a graph $G$ is defined as
  \begin{equation}
\label{eqn::tmix_definition}
 \tmix(\CG,\ve) := \min\left\{ t \geq 0: d(t) \leq \ve \right\}.
\end{equation}
Throughout, we set $\tmix(\CG) := \tmix(\CG, \tfrac{1}{4})$.
The characterisation \eqref{eq::tv4} suggests that sometimes it is more convenient to work with chains started from two different initial states, so let us define
\[ \bar d(t) := \max_{x,y\in \CS} \| P^t(x,\cdot)-P^t(y,\cdot)\|_{\text{TV}}.\]
Then, we have the following comparison:
\begin{lemma}\label{lem::dtdt} With the above definitions,
\be\label{eq::dtdbart} d(t)\le  \bar d(t)\le 2 d(t)\ee
Further, the function $\bar d(t)$ is submultiplicative, i.e.
\be \label{eq::dbarsubmulti}\bar d(t+s) \le \bar d(t) \bar d(s), \ee
and combining yields
\be\label{eq::dsubmulti} d(k t) \le 2^k d(t)^k\ee
\end{lemma}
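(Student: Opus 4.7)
My plan is to handle the three inequalities in order, since the third follows directly from the first two.

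For \eqref{eq::dtdbart}, the upper bound on $\bar d(t)$ is a clean triangle-inequality argument: for any $x,y$, insert $\pi$ in the middle and bound $\|P^t(x,\cdot)-P^t(y,\cdot)\|_{\TV}\le\|P^t(x,\cdot)-\pi\|_{\TV}+\|\pi-P^t(y,\cdot)\|_{\TV}\le 2d(t)$; take the max over $x,y$. For the lower bound $d(t)\le\bar d(t)$, I would exploit stationarity: since $\pi=\sum_y\pi(y)P^t(y,\cdot)$, the measure $P^t(x,\cdot)-\pi$ is a convex combination $\sum_y\pi(y)\bigl(P^t(x,\cdot)-P^t(y,\cdot)\bigr)$, so by the triangle inequality (equivalently, convexity of $\|\cdot\|_{\TV}$), $\|P^t(x,\cdot)-\pi\|_{\TV}\le\sum_y\pi(y)\|P^t(x,\cdot)-P^t(y,\cdot)\|_{\TV}\le\bar d(t)$, and maximizing over $x$ gives $d(t)\le\bar d(t)$.

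For submultiplicativity \eqref{eq::dbarsubmulti}, the key is a two-stage coupling; this is where the genuine content of the lemma lies. Fix $x,y$ and first pick an optimal coupling $(X_s,Y_s)$ of $P^s(x,\cdot)$ and $P^s(y,\cdot)$, so that $\Pv[X_s\ne Y_s]=\|P^s(x,\cdot)-P^s(y,\cdot)\|_{\TV}\le\bar d(s)$, using the equivalence \eqref{eq::tv4}. Then, conditional on $(X_s,Y_s)=(z,w)$, run a second coupling for an additional $t$ steps: if $z=w$, just let the two chains evolve together (so they agree at time $s+t$); if $z\ne w$, use an optimal coupling of $P^t(z,\cdot)$ and $P^t(w,\cdot)$, giving disagreement probability at most $\bar d(t)$. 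Conditioning on $\{X_s\ne Y_s\}$ and combining,
\[
\Pv[X_{s+t}\ne Y_{s+t}]\le\Pv[X_s\ne Y_s]\cdot\bar d(t)\le\bar d(s)\bar d(t),
\]
and using \eqref{eq::tv4} on the left side yields $\|P^{s+t}(x,\cdot)-P^{s+t}(y,\cdot)\|_{\TV}\le\bar d(s)\bar d(t)$; taking the max over $x,y$ completes this step. The only delicate point, which I flag as the main thing to get right, is the "stickiness" of the coupling once the two chains meet — which depends on choosing the joint process, not just coupling the time-$t$ marginals in isolation.

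Finally, \eqref{eq::dsubmulti} is immediate from the other two parts: by \eqref{eq::dtdbart}, $d(kt)\le\bar d(kt)$; iterating \eqref{eq::dbarsubmulti} gives $\bar d(kt)\le\bar d(t)^k$; and the other half of \eqref{eq::dtdbart} gives $\bar d(t)\le 2d(t)$, so $d(kt)\le(2d(t))^k=2^kd(t)^k$.
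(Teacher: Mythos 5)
Your proof is correct and follows essentially the same route as the paper: the same triangle-inequality and stationarity-convexity argument for $d(t)\le\bar d(t)\le 2d(t)$, and the standard two-stage coupling for submultiplicativity (which the paper itself only cites from \cite{LPW08} Lemma 4.12 rather than reproducing). You are right to flag the "stickiness" of the coupling after the chains meet as the key point; that is exactly what makes the second-stage disagreement probability vanish on the event $\{X_s=Y_s\}$.
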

\begin{proof}
We only prove \eqref{eq::dtdbart} here. The proof of \eqref{eq::dbarsubmulti} is the proof of  Lemma 4.12 in \cite{LPW08}, and \eqref{eq::dsubmulti} is an easy combination of the first two statements of the lemma.  To prove the second inequality in \eqref{eq::dtdbart}, we use the triangle inequality
\[ \ba \bar d(t) = \| P^t(x,\cdot)-P^t(y,\cdot)\|_{\text{TV}} &\le  \| P^t(x,\cdot)-\pi(\cdot)\|_{\text{TV}}+ \| \pi(\cdot)-P^t(y,\cdot)\|_{\text{TV}}\\
&\le 2 d(t),\ea \]
and for the first inequality we can use that $\pi P^t =\pi$ to get
\[ d_x(t)= \max_{A\in \CS} |P^t(x,A)-\pi(A)| \\
= \max_{A\in \CS} \Big|\sum_{y\in \CS} \pi(y)P^t(x,A)-P^t(y,A)\Big|. \]
 Now, by the triangle inequality the right hand side is at most
\[ \ba \max_{A\in \CS}\sum_{y\in \CS} \pi(y)\left|P^t(x,A)-P^t(y,A)\right| &\le \sum_{y\in \CS} \pi(y)\max_{A\in \CS}\left|P^t(x,A)-P^t(y,A)\right|\\
&= \bar d(t). \ea \]
\end{proof}
The definition $\bar d(t)$ is extremely useful, since it allows us to relate the mixing time of the chain to the tail behavior of the so called {\bf coupling time}:
Given a coupling $(X_t,Y_t)$ of $P^t(x,\cdot)$ and $P^t(y,\cdot)$, let us define
\[ \tau_{\mathrm{couple}}:= \min\{t: X_t = Y_t\}. \]
Then we have
\be\label{eq::couple} d(t)\le \bar d(t)\le \max_{x,y} \Pv[X_t\neq Y_t] = \max_{x,y} \Pv[ \tau_{\mathrm{couple}}>t]. \ee

With all these prerequisites in our hands, we can state and prove our first theorem:
\begin{theorem}\label{thm::cyclemix}
The mixing time of $C_n$, the cycle on $n$ vertices is bounded from above by
\[ \tmix(C_n)\le n^2. \]
\end{theorem}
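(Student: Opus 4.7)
The plan is to combine a natural coupling of two walks on $C_n$ with the coupling bound \eqref{eq::couple}. Fix $x, y \in C_n$ and construct $(X_t, Y_t)$ as follows: at each step, flip a fair coin; on heads move $X$ to a uniformly chosen neighbor and keep $Y$ fixed; on tails do the reverse. After the first time the two chains meet, let them evolve together by taking identical lazy steps thereafter. Each marginal is then the lazy simple random walk on $C_n$ (stay with probability $1/2$, otherwise jump to each neighbor with probability $1/4$), so $(X_t, Y_t)$ is a legitimate coupling of the chain started from $x$ and from $y$.

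Next I would estimate $\tau_{\mathrm{couple}} = \min\{t : X_t = Y_t\}$. Let $D_t$ denote the cyclic distance between $X_t$ and $Y_t$, viewed as an element of $\{0, 1, \dots, n\}$ with $0$ and $n$ identified. Before coalescence exactly one of $X$ or $Y$ moves, and it does so by $\pm 1$, so $D_t$ evolves as a simple random walk on $\{0, 1, \dots, n\}$ absorbed at the endpoints. The function $h(k) = k(n-k)$ satisfies $\tfrac12(h(k-1)+h(k+1)) - h(k) = -1$, so optional stopping (equivalently, classical gambler's ruin) yields $\Ev_k[\tau_{\mathrm{couple}}] = k(n-k) \le n^2/4$, uniformly in the starting pair $(x,y)$.

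Markov's inequality together with \eqref{eq::couple} then gives
\[ d(n^2) \le \bar d(n^2) \le \max_{x,y} \Pv[\tau_{\mathrm{couple}} > n^2] \le \frac{n^2/4}{n^2} = \frac14, \]
and the convention $\tmix(C_n) := \tmix(C_n, \tfrac14)$ closes the argument. The only substantive step is the gambler's ruin bound on $\Ev_k[\tau_{\mathrm{couple}}]$ — this is where the quadratic scaling in $n$ really comes from — and I do not anticipate any serious obstacle beyond it; verifying that each marginal of the coupling is the intended lazy chain, and the final Markov step, are just bookkeeping.
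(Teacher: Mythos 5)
Your proof is correct and follows essentially the same route as the paper's: the same one-walk-moves-at-a-time coupling, the same reduction to gambler's ruin on $\{0,\dots,n\}$ via the directed distance $D_t$, the bound $\Ev_k[\tau_{\mathrm{couple}}]=k(n-k)\le n^2/4$, and then Markov's inequality. The only cosmetic difference is that you verify $h(k)=k(n-k)$ solves the discrete Poisson equation, whereas the paper reaches the same expectation via the two martingales $D_t$ and $D_t^2-t$; also note that ``cyclic distance'' should more precisely be the directed (clockwise) distance, as you implicitly use when unfolding $D_t$ onto $\{0,\dots,n\}$ with absorption at both ends.
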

\begin{proof}
We will construct a coupling of the measures $P^t(x,\cdot)$ and $P^t(y,\cdot)$ and use \eqref{eq::couple} to estimate $d(t)$.
Note that $P^t(x,\cdot)$ and $P^t(y,\cdot)$ are the transition measures of two lazy random walks, say $X_t$ and $Y_t$, with $X_0=x$ and $Y_0=y$. Thus, we construct a coupling of $(X_t,Y_t)$ as follows: we couple the increments of the walks, as long as $X_t\neq Y_t$ holds:
\[ \ba \Pv( X_t\!-\!X_{t-1}\!=\!0, Y_t\!-\!Y_{t-1}\!=\!+1)\! =\!\tfrac{1}{4}; &\  \ \Pv( X_t\!-\!X_{t-1}\!=\!0, Y_t\!-\!Y_{t-1}\!=\!-1)\! =\!\tfrac14;\\
\Pv( X_t\!-\!X_{t-1}\!=\!+1, Y_t\!-\!Y_{t-1}\!=\!0) \!=\! \tfrac14; &\ \  \Pv( X_t\!-\!X_{t-1}\!=\!-1, Y_t\!-\!Y_{t-1}\!=\!0)\! =\!\tfrac14.
\ea \]
If the two walks meet than they stay together from that point on.
It is easy to check that the marginals of the two walks are correct. The advantage of this coupling is that before collision the two walks never move at the same time. I.e., the clockwise distance $D_t=X_t-Y_t$ changes at each step by $+1$ or $-1$. This means that $D_t$ is doing a simple (non-lazy) symmetric random walk on $\{0,1,\dots n\}$ with $D_0:=k\in \{1,\dots n-1 \}$, and we are waiting until it hits $0$ or $n$. This is exactly the well known \emph{Gambler's ruin} problem. The coupling time is then $\tau_{0,n}$, the hitting time of the set $\{0,n\}$. We can use the martingale $D_t$ and use optional stopping to calculate its expected value:
\[ k=\Ev_k[D_0]=\Ev_k[D_{\tau_{0,n}}]=\Pv_k[D_{\tau_{0,n}}=n] n,\]
from which $\Pv_k[D_{\tau_{0,n}}=n]=k/n$. Then, $D_t^2-t$ is also a martingale, (to check is left for the reader as an exercise)
and using the previous calculation and optional stopping gives \[ \Ev_k[\tau_{0,n}]= k (n-k)\le n^2 /4. \]

Combining Lemma \ref{lem::dtdt}, the characterisation \eqref{eq::tv4} of the total variation distance and the previous calculations with a Markov's inequality, we arrive at the following sequence of inequalities:
\[ d(t)\le \bar d(t)\le \max_{x,y} \Pv[X_t\neq Y_t]= \max_{k}\Pv_k[D_t > t] \le \frac{\Ev_k[D_t]}{t} \le \frac{n^2}{4t}. \]
Now let us set $t=n^2$, then we get $d(n^2)\le1/4$, implying $\tmix(C_n)\le n^2$.
\end{proof}
A similar coupling can be used to give an upper bound on the mixing time on the $d$-dimensional tori:
\begin{theorem}\label{thm::torusmix}
The total variation mixing time on $\Z_n^d$, the $d$-dimensional torus is bounded from above by
\be \tmix(\Z_n^d) \le 3 d \log d\  n^2. \ee
\end{theorem}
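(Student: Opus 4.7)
The plan is to iterate the cycle coupling from Theorem~\ref{thm::cyclemix} one coordinate at a time. At each step I would pick a coordinate $i \in \{1,\dots,d\}$ uniformly at random, using the same $i$ for both walks, and then apply the lazy $\pm 1$ coupling from the proof of Theorem~\ref{thm::cyclemix} to that coordinate alone, leaving the other $d-1$ coordinates unchanged in both walks. A quick check of marginals (pick a coordinate uniformly, then do a lazy cycle step) confirms this is a valid coupling of two standard lazy random walks on $\Z_n^d$.

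Under this coupling the cyclic discrepancy $D_t^{(i)} \in \{0,1,\dots,n\}$ in coordinate $i$ moves only on the random steps when $i$ is chosen, which happens with probability $1/d$; before the two $i$-coordinates agree, each such update is a symmetric $\pm 1$ step, and after they agree $D_t^{(i)}$ is absorbed at $\{0,n\}$. The gambler's-ruin computation from Theorem~\ref{thm::cyclemix}, applied to the martingales $D_t^{(i)}$ and $(D_t^{(i)})^2 - t/d$ (the factor $1/d$ accounting for the coordinate-selection probability), then gives $\Ev_k[T_i] = d\, k(n-k) \le \tfrac14 d n^2$, where $T_i$ is the coupling time in coordinate~$i$.

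To promote this into an exponential tail, I would invoke Markov's inequality to obtain $\Pv(T_i > d n^2) \le 1/4$ uniformly in the starting position $k \in \{1,\dots,n-1\}$, and then iterate using the strong Markov property to get $\Pv(T_i > m\, d n^2) \le 4^{-m}$ for every integer $m\ge 1$. Choosing $m = \lceil \log_4(4d)\rceil$ gives $\Pv(T_i > m d n^2) \le 1/(4d)$, and a union bound over the $d$ coordinates together with \eqref{eq::couple} yields $d(t) \le 1/4$ at $t = m d n^2$. Absorbing constants into the logarithm gives $m d n^2 \le 3 d n^2 \log d$ for $d \ge 2$, which is the claimed bound.

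The only step requiring genuine care is the strong-Markov iteration: it is essential that the conditional estimate $\Ev[T_i - t \mid \CF_t] \le \tfrac14 d n^2$ on the event $\{T_i > t\}$ holds \emph{uniformly} in the current position of $D_t^{(i)}$, since that is precisely what makes the geometric decay close. Everything else is routine: the per-coordinate dynamics are independent by construction of the coupling, so the final union bound over coordinates has no hidden correlations to track, and the $\log d$ factor appears exactly as the union-bound overhead.
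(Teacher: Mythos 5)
Your proof is correct and follows essentially the same route as the paper: the coordinate-wise coupling that selects a uniform coordinate and applies the cycle coupling there, the gambler's-ruin martingale computation giving $\Ev_k[T_i]=d\,k(n-k)\le dn^2/4$ uniformly in $k$, a Markov-plus-strong-Markov block iteration to upgrade this to a geometric tail, and a union bound over the $d$ coordinates. The only cosmetic difference is the block length: the paper runs the chain in blocks of $dn^2/2$ (giving a ratio-$1/2$ geometric tail and the bound $\Pv[\tau_i>t]\le 2(1/2)^{2t/(dn^2)}$), while you use blocks of $dn^2$ with ratio $1/4$; both choices absorb into the constant $3$ in $3d\log d\,n^2$ for $d\ge 2$. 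Your extra emphasis that the conditional expectation bound must hold uniformly in the current discrepancy (so the geometric iteration closes) is exactly the point the paper uses implicitly, and is worth making explicit.
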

\begin{proof}
We couple the two walks $\un{X}_t=(X_t^1, X_t^2, \dots X_t^d)$ and $\un{Y}_t=(Y_t^1, Y_t^2, \dots Y_t^d)$ coordinate-wise with the same coupling as in the proof of Theorem \ref{thm::cyclemix}. More precisely, at each step we first pick a uniform number $U_t\in \{1,2, \dots d\}$ independently of everything else, and then, we check if the corresponding coordinates $X_t^{U_t}, Y_t^{U_t}$ coincide or not. If so, we move both walks with the same increment: $0$, $+1$ or $-1$ with probabilities $1/2,1/4,1/4$ each. If $X_t^{U_t}\neq Y_t^{U_t}$, then we apply the coupling described in the proof of Theorem \ref{thm::cyclemix} for the $U_t$th coordinate.
Let $D_t^i$ denote the clockwise difference between $X_t^i$ and $Y_t^i$, and $\tau_i$ denote the first time when $D_t^i$ hits $\{0,n\}$.
Since each coordinate $i$ has a Geometric($1/d$) waiting time for its next move, the marginal distribution of each $\tau_i$ can be written as
\[ \tau_i = \sum_{j=1}^{\tau_{0,n}^{(i)}} Z_j \]
 with $Z_j\sim \mathrm{Geo}(1/d)$, and $\tau_{0,n}^{(i)}\sim \tau_{0,n}$ as in the proof of Theorem \ref{thm::cyclemix}.
This gives that  $\Ev[\tau_i] \le \frac{d n^2}{4} $. Note that this bound holds for every starting point $x_i, y_i$. So we can run the chain in blocks of $d n^2/2$ and then in each block we hit the set $\{ 0,n\}$ with probability at least $1/2$ by Markov's inequality. Hence the hitting of the set $\{0,n\}$ is stochastically dominated by a random variable of the form $\frac12 d n^2 \mathrm{Geo}(1/2)$. This yields the bound
\[ \Pv[\tau_i>t ] \le 2 \left(\frac{1}{2}\right)^{\frac{2 t}{d n^2}},\]
where the factor $2$ comes from ignoring the integer part of $\frac{2 t}{d n^2}$. Set $t= 3 d \log d \cdot n^2$,
then, for all $d\ge 2$:
\[ \Pv[\un X_t \neq \un Y_y]= \Pv[\exists i: \tau_i>t]\le d \cdot \Pv[\tau_i > t]\le 2d \left(\frac{1}{2}\right)^{\frac{2t}{d n^2}}= 2 d^{1-6 \log 2}\le \frac14. \]
Hence we have $\tmix(\Z_n^d)\leq 3 d \log d n^2$, finishing the proof.  \end{proof}

\subsection{Strong stationary times}
In many cases the following random times give a useful bound on mixing times:
%The time $\tau_{\text{refresh}}$ in the previous proof has a peculiar property which makes it very useful:
\begin{definition}\label{def::sst}
A randomized stopping time $\tau$ is called a {\emph strong stationary time} for the Markov chain $X_t$ on $G$ if
\be\label{eq::defsst} \Pv_{x}\left[ X_{\tau}=y, \tau = t \right] = \pi(y) \Pv_x[\tau=t], \ee
that is, the position of the walk when it stops at $\tau$ is distributed according to $\pi$ and independent of the value of $\tau$.
\end{definition}
\noindent The adjective randomized means that the stopping time can depend on some extra randomness, not just purely the trajectories of the Markov chain, for a precise definition see \cite[Section 6.2.2]{LPW08}.
\begin{definition}\label{def::halting_state} A state $h(x)\in V(G)$ is called a  \emph{halting state} for a stopping time $\tau$ and initial state $x$ if $\{X_t=h(x)\}$ implies $\{\tau\le t\}$.
\end{definition}

Strong stationary times are useful since they are closely related to an other notion of distance from the stationary measure. We define
\begin{definition}
The \emph{separation distance} $s(t)$ is defined as
\be\label{eq::sep} s(t):=\max_{x\in \CS} s_x(t) \mbox{ with } s_x(t):= \max_{y\in \CS} \left( 1- \frac{P^t(x,y)}{\pi(y)}\right).\ee
\end{definition}
We mention that the separation distance is not a metric.

 The relation between the separation distance and any strong stationary  time $\tau$ is the following inequality from \cite{AF02} or \cite[Lemma 6.11]{LPW08}:
\be\label{eq::sep_ineq}\forall x \in \CS:  s_x(t)\le \Pv_x(\tau>t). \ee
The proof is just two lines, so we include it here for the reader's convenience:
for any $y$ we have
\be \label{eq::proofofsst} 1-\frac{P^t(x,y)}{\pi(y)} \le 1-\frac{\Pv_x[X_t=y, \tau\le t]}{\pi(y)} \ee
Now \eqref{eq::defsst} implies that the last expression equals
\[  1-\frac{\pi(y)\Pv_x[ \tau\le t]}{\pi(y)}=\Pv_x[\tau>t].\]

Later we will need a slightly stronger result than \eqref{eq::sep_ineq}, namely from \eqref{eq::proofofsst} it follows that \emph{if $\tau$ has a halting state $h(x)$ for $x$}, then putting $y=h(x)$ yields that equality holds in \eqref{eq::sep_ineq}.
Unfortunately, the statement can not be reversed: the state $h(x,t)$ maximizing the separation distance at time $t$ can also depend on $t$ and thus the existence of a halting state is not necessarily needed to get equality in \eqref{eq::sep_ineq}.

On the other hand, one can always construct $\tau$ such that \eqref{eq::sep_ineq} holds with equality for every $x\in \CS$. This $\tau$ does not necessarily obeys halting states.  This is one of the main ingredients to our proofs in Section \ref{s:lamp}, so we cite it as a Theorem (with adjusted notation).
\begin{theorem}\label{thm::AD}[Aldous, Diaconis]\emph{\cite[Proposition 3.2]{AD86}}
Let $(X_t, t\ge 0)$ be an irreducible aperiodic Markov chain on a finite state space $\CS$ with initial state $x$ and stationary distribution $\pi$, and let $s_x(t)$ be the separation distance defined as in \eqref{eq::sep}.
Then
\begin{enumerate}
\item if $\tau$ is a strong stationary time for $X_t$, then $s_x(t) \le \Pv_x(\tau>t)$ for all $t\ge 0.$
\item Conversely, there exists a strong stationary time $\tau$ such that $s_x(t)= \Pv_x(\tau>t)$ holds with equality.
    \end{enumerate}
\end{theorem}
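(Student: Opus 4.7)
Part (1) is already established above in the lines leading to \eqref{eq::sep_ineq}, so the content to address is part (2): exhibit a strong stationary time $\tau$ that achieves the equality $\Pv_x(\tau>t) = s_x(t)$ simultaneously for all $t$. My plan is greedy: at each time $t$, push the cumulative stopping probability as high as the separation constraint permits, namely to $a_t := 1 - s_x(t) = \min_y P^t(x,y)/\pi(y)$. This is forced from above because any strong stationary construction must satisfy $\Pv_x[\tau\le t, X_t = y] = \pi(y)\,\Pv_x[\tau\le t] \le P^t(x,y)$, i.e.\ $\Pv_x[\tau\le t] \le a_t$.

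The construction is inductive in $t$. On the event $\{\tau > t-1\}$, given the trajectory $X_0,\dots,X_t$, set $\tau = t$ with independent probability $r_t(X_t)$, where
\[ r_t(y) := \frac{(a_t - a_{t-1})\,\pi(y)}{P^t(x,y) - a_{t-1}\,\pi(y)}, \]
and $a_{-1}:=0$. The identity to verify inductively is $\Pv_x[\tau \le t,\, X_t = y] = a_t\,\pi(y)$. Given it at $t-1$, the Markov property together with $\pi P = \pi$ gives $\Pv_x[\tau \le t-1,\, X_t = y] = a_{t-1}\,\pi(y)$, hence $\Pv_x[\tau > t-1,\, X_t = y] = P^t(x,y) - a_{t-1}\,\pi(y)$; multiplying by $r_t(y)$ produces exactly the increment $(a_t - a_{t-1})\,\pi(y)$. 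Summing in $y$ yields $\Pv_x(\tau \le t) = a_t$ and $\Pv_x[\tau = t,\, X_t = y] = \pi(y)\,\Pv_x[\tau=t]$, so \eqref{eq::defsst} holds and $\Pv_x(\tau>t) = 1 - a_t = s_x(t)$ as desired.

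Two consistency checks remain, both reducing to the monotonicity $a_{t-1} \le a_t$ (which gives $r_t(y)\ge 0$ and, via the defining inequality $a_t \le P^t(x,y)/\pi(y)$, also $r_t(y)\le 1$). I would establish monotonicity by introducing the reverse kernel $\wih P(y,z) := \pi(z)P(z,y)/\pi(y)$, which is stochastic in $z$ because $\sum_z\pi(z)P(z,y) = \pi(y)$. Then
\[ \frac{P^{t+1}(x,y)}{\pi(y)} \;=\; \sum_z \wih P(y,z)\, \frac{P^t(x,z)}{\pi(z)} \;\ge\; \min_z \frac{P^t(x,z)}{\pi(z)} \;=\; a_t, \]
and minimizing the left side over $y$ gives $a_{t+1}\ge a_t$.

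The main conceptual hurdle is recognizing that the greedy choice $a_t = 1 - s_x(t)$ is both forced from above (by the displayed constraint in paragraph one) and feasible from below (by the reverse-kernel monotonicity in paragraph three); once this is in hand, the rest is bookkeeping, and $\tau$ is manifestly a randomized stopping time since it is produced by an independent coin flip at each step whose bias depends only on the observed trajectory.
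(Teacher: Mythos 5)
The paper states this result without proof, citing \cite{AD86} for part (2) and proving only part (1) in the two lines preceding the theorem; your proposal therefore supplies an argument the paper omits, and it is correct. The greedy ``filling'' construction --- on $\{\tau > t-1,\ X_t = y\}$ flip an independent coin with bias $r_t(y)$ tuned so that $\Pv_x[\tau\le t,\ X_t = y] = a_t\pi(y)$ --- is indeed the route taken in Aldous--Diaconis, and the two feasibility issues ($r_t(y)\ge 0$ and $r_t(y)\le 1$) reduce, exactly as you say, to $a_{t-1}\le a_t$ together with the defining bound $a_t\pi(y) \le P^t(x,y)$; your reverse-kernel computation settles the monotonicity cleanly and is the one conceptual step one cannot skip. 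Two small points worth making explicit in a clean write-up: when the denominator $P^t(x,y)-a_{t-1}\pi(y)$ vanishes, the conditioning event $\{\tau>t-1,\ X_t=y\}$ is null (and your monotonicity forces $a_t = a_{t-1}$ there, so the numerator vanishes too), hence $r_t(y)$ may be set arbitrarily; and irreducibility plus aperiodicity of the finite chain give $P^t(x,y)\to\pi(y)$, so $a_t\to 1$ and $\Pv_x[\tau=\infty]=0$, confirming that $\tau$ is an honest randomized stopping time. With those remarks added, the argument is complete.
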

Combining these, we will call a strong stationary time $\tau$ \emph{separation optimal} if it achieves equality in \eqref{eq::sep_ineq}. Mind that every stopping time possessing halting states is separation optimal, but not the other way round.

The next lemma relates the total and the separation distance:
\begin{lemma}\label{lem::sep_tv}
For any reversible Markov chain and any state $x \in \CS$, the separation distance from initial vertex $x$ satisfies:
\begin{align}
d_x(t) &\le s_x(t) \label{eq::tvlesep}\\
s_x(2t) & \le 4 d(t) \label{eq::sepletv}
\end{align}
\end{lemma}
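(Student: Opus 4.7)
The plan is to handle the two inequalities separately, since only \eqref{eq::sepletv} requires reversibility.

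For \eqref{eq::tvlesep}, I would start from the $L^1$ characterization \eqref{eq::tv3} and factor out $\pi(y)$:
\[
d_x(t) = \sum_{y:\,P^t(x,y)<\pi(y)} \bigl(\pi(y)-P^t(x,y)\bigr) = \sum_{y:\,P^t(x,y)<\pi(y)} \pi(y)\Bigl(1-\tfrac{P^t(x,y)}{\pi(y)}\Bigr).
\]
Each factor $1-P^t(x,y)/\pi(y)$ is at most $s_x(t)$ by the definition \eqref{eq::sep}, so pulling this bound out of the sum and using $\sum_y \pi(y)\le 1$ immediately gives $d_x(t)\le s_x(t)$.

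For \eqref{eq::sepletv}, the key step is a symmetrization of the two-step kernel via detailed balance. Applying $\pi(z)P^t(z,y)=\pi(y)P^t(y,z)$ inside $P^{2t}(x,y)=\sum_z P^t(x,z)P^t(z,y)$ yields
\[
\frac{P^{2t}(x,y)}{\pi(y)} = \sum_z \frac{P^t(x,z)\,P^t(y,z)}{\pi(z)}.
\]
Setting $e_x(z) := P^t(x,z)-\pi(z)$ and $e_y(z) := P^t(y,z)-\pi(z)$, and noting that $\sum_z e_x(z)=\sum_z e_y(z)=0$, the cross terms collapse and the identity simplifies to
\[
\frac{P^{2t}(x,y)}{\pi(y)} = 1 + \sum_z \frac{e_x(z)\,e_y(z)}{\pi(z)}.
\]
So it suffices to lower bound the remainder $\sum_z e_x(z)e_y(z)/\pi(z)$ by $-4 d(t)$.

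Only indices at which $e_x$ and $e_y$ have opposite signs contribute negatively. The crucial (but simple) observation is that if $e_x(z)<0$ then $|e_x(z)| = \pi(z)-P^t(x,z) \le \pi(z)$, so $|e_x(z)|/\pi(z)\le 1$. Consequently, on the set where $e_x(z)<0$ and $e_y(z)>0$, we have $|e_x(z)e_y(z)|/\pi(z)\le e_y(z)$, and the sum over such $z$ is at most $\sum_{z:\,e_y(z)>0} e_y(z) = d_y(t)$. By the symmetric argument the opposite-sign set contributes at most $d_x(t)$, so the total negative part is bounded by $d_x(t)+d_y(t) \le 2 d(t)$. This gives $1 - P^{2t}(x,y)/\pi(y) \le 2 d(t) \le 4 d(t)$ for every $y$, and taking the maximum over $y$ yields \eqref{eq::sepletv}. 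The only real obstacle is spotting the reversibility symmetrization; after that, the analysis is routine sign bookkeeping, and it in fact produces the sharper bound $s_x(2t) \le 2 d(t)$.
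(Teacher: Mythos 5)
Your proof of \eqref{eq::tvlesep} is the same as the paper's: factor out $\pi(y)$ from the $\ell^1$ form of the total variation distance and bound by the worst ratio. For \eqref{eq::sepletv}, you and the paper share the opening move --- reversibility gives $P^{2t}(x,y)/\pi(y) = \sum_z P^t(x,z)P^t(y,z)/\pi(z)$ --- but diverge afterwards. The paper multiplies by $\sum_z\pi(z)=1$, applies Cauchy--Schwarz, drops to $\sum_z P^t(x,z)\wedge P^t(y,z) = 1 - \|P^t(x,\cdot)-P^t(y,\cdot)\|_{\TV}$, and finishes with $\|P^t(x,\cdot)-P^t(y,\cdot)\|_{\TV}\le 2d(t)$, yielding $1-(1-2d(t))^2\le 4d(t)$. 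You instead write $P^t(\cdot,z)=\pi(z)+e_\cdot(z)$ and use $\sum_z e_x = \sum_z e_y = 0$ to get the exact identity $P^{2t}(x,y)/\pi(y) = 1 + \sum_z e_x(z)e_y(z)/\pi(z)$, then bound the negative cross-terms via the observation $-e_x(z)\le\pi(z)$ whenever $e_x(z)<0$, giving $1 - P^{2t}(x,y)/\pi(y) \le d_x(t)+d_y(t)$. Both arguments are correct and elementary; yours avoids Cauchy--Schwarz entirely, replaces it with transparent sign bookkeeping, and as you note delivers the sharper constant $s_x(2t)\le 2d(t)$ (the paper's Cauchy--Schwarz route naturally bounds in terms of $\bar d(t)$, which can be twice $d(t)$, hence its factor $4$). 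Both establish the lemma as stated.
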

\begin{proof}
For a short proof of \eqref{eq::tvlesep} see \cite{AF02} or \cite[Lemma 6.13]{LPW08}, and combine \cite[Lemma 19.3]{LPW08} with a triangle inequality to conclude \eqref{eq::sepletv}. Here we write the proofs for the reader's convenience. We have
\[ \ba d_x(t)= \sum_{\begin{subarray}{c}y\in\CS\\  P^t(x,y)< \pi(y)\end{subarray}}  \left[ \pi(y)- P^t(x,y)\right] &= \sum_ {\begin{subarray}{c}y\in\CS\\  P^t(x,y)< \pi(y)\end{subarray}} \pi(y)\left[1-\frac{P^{t}(x,y)}{\pi(y)}\right]\\
&\le \max_{y} \left[1-\frac{P^{t}(x,y)}{\pi(y)}\right] = s_x(t).
\ea \]
To see \eqref{eq::sepletv}, we mind that reversibility means that $P^t(z,y)/\pi(y)=P^t(y,z)/\pi(z)$. Hence we have
\[ \frac{P^{2t}(x,y)}{\pi(y)} = \sum_{z\in \CS} \frac{P^t(x,z)P^t(z,y)}{\pi(y)}= \sum_{z\in \CS} \frac{P^t(x,z)P^t(y,z)}{\pi(z)}\cdot \sum_{z\in \CS}\pi(z)  \]
Applying Cauchy-Schwarz to the right hand side implies
\[ \frac{P^{2t}(x,y)}{\pi(y)} \ge \left( \sum_{z\in \CS} \sqrt{P^t(x,z)P^t(y,z)}\right)^2 \ge \left(  \sum_{z\in \CS} P^t(x,z) \wedge P^t(y,z)\right)^2\!\!. \]
Recall \eqref{eq::tv4}, i.e.
\[ \sum_{z} \mu(z) \wedge \nu(z) = 1-\| \mu- \nu \|_{\text{TV}}. \]
Combining this with the previous calculation results in
\[ 1- \frac{P^{2t}(x,y)}{\pi(y)} \le 1- \left(1-\| P^t(x,.), P^t(y,.) \|_{\text{TV}}\right)^2. \]
Using the triangle inequality $\| P^t(x,.)- P^t(y,.) \|_{\text{TV}} \le 2 d(t)$ and expanding the terms yields \eqref{eq::sepletv}.
\end{proof}
We demonstrate the use of strong stationary times by analysing the \emph{separation time} of the $d$-dimensional hypercube: the separation time is defined similarly as the mixing time in \eqref{eqn::tmix_definition} by replacing $d(t)$ by $s(t)$.
\begin{theorem}
For the lazy random walk on the hypercube $H_d=\{0,1\}^d$,
\[ t_{\mathrm{sep}}(H_d,\ve) \le d \log d + \log (1/\ve) d. \]
\end{theorem}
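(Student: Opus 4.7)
The plan is to produce a strong stationary time with an explicit halting state and then bound its tail with the classical coupon collector estimate. First I would rewrite one step of the lazy walk in the following equivalent form: choose a coordinate $i\in\{1,\dots,d\}$ uniformly at random, and then overwrite the $i$-th bit of the current state with a fresh independent uniform bit in $\{0,1\}$. A direct check of transition probabilities shows this produces the same chain as the lazy random walk on $H_d$: the probability of staying put equals $\sum_i (1/d)(1/2)=1/2$, and the probability of flipping the $i$-th coordinate is $1/(2d)$.

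Let $\tau$ be the first time at which every coordinate has been selected at least once, i.e.\ the coupon collector time for $d$ coupons. Because the value of each coordinate at time $\tau$ equals the \emph{last} independent uniform bit written to it, an inductive argument shows that $X_\tau$ is uniformly distributed on $\{0,1\}^d$ and independent of $\tau$, so $\tau$ is a strong stationary time in the sense of Definition \ref{def::sst}. Moreover, for any initial state $x$, the complement $h(x):=\ol x$ is a halting state: if $X_t=\ol x$, then every coordinate must have been flipped at least once, hence selected at least once, which forces $\tau\le t$. By the discussion immediately following \eqref{eq::proofofsst}, equality then holds in \eqref{eq::sep_ineq}, so
\[ s_x(t) \;=\; \Pv_x(\tau>t) \qquad \text{for every } x\in H_d. \]

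Finally I would estimate the coupon collector tail by a union bound over the $d$ coordinates:
\[ \Pv_x(\tau>t) \;\le\; d\bigl(1-1/d\bigr)^t \;\le\; d\,e^{-t/d}. \]
Choosing $t=d\log d + d\log(1/\ve)$ makes the right-hand side exactly $\ve$, and maximizing over $x$ yields $s(t)\le\ve$, which is the desired bound. The one step requiring real care is verifying that $\tau$ is genuinely a strong stationary time, i.e.\ that the position at $\tau$ is independent of $\tau$ and not merely marginally uniform; this uses the fact that, under the refresh description, the bit written at the \emph{last} visit of each coordinate is independent of the schedule of selections. Everything else reduces to a routine union bound.
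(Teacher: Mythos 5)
Your proof is correct and follows essentially the same route as the paper: rewrite the lazy hypercube walk via the coordinate-refresh dynamics, take the coupon-collector time as the strong stationary time, observe that the complement $\ol{x}$ is a halting state (so the separation distance equals the tail probability), and bound that tail by a union bound with the choice $t = d\log d + d\log(1/\ve)$. Your explicit verification of the refresh-dynamics transition probabilities and the emphasis on why $X_\tau$ is independent of $\tau$ (the last write to each coordinate) are the same ideas the paper states more briefly.
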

\begin{proof}
We construct the following strong stationary time for the lazy random walk on the hypercube: independently in each step, we pick a uniform coordinate $U_t\in \{0,1,\dots d\}$, and then independently of the current values and everything else, we set $X_t^{U_t}=1$ with probability $1/2$ and $X_t^{U_t}=0$ with probability $1/2$. By doing so, the probability that the chain stays put is exactly $1/2$, and with probability $1/2$ it moves to a position chosen uniformly among all neighboring vertices, i.e., we get exactly the transition probabilities for a lazy random walk on the hypercube.

Define $\tau_{\text{refresh}}$ as the first time that all coordinates have been chosen.
 Then, at $\tau_{\text{refresh}}$, each coordinate $i \in \{1, \dots d\}$ has been selected already at least once, thus, its position is $0$ or $1$ with probability $1/2$ each, independently of how long we had to wait for $\tau_{\mathrm{refresh}}$ to happen. Also, if the original state was $\un x=(x_1, x_2, \dots x_d)$, then to reach  $h(\un x)= (1-x_1, 1-x_2, \dots 1-x_d)$, we have to refresh each  coordinate at least once, i.e., $h(\un x)$ is a halting state for $\tau_{\text{refresh}}$. This shows that $\tau_{\mathrm{refresh}}$ is a \emph{separation-optimal strong stationary time} for the lazy RW on the hypercube.

Note that the distribution of $\tau_{\mathrm{refresh}}$ is the same as that of the \emph{coupon collector} problem:
\[ s_x(t)=\Pv_x[\tau_{\text{refresh}} >t] = \Pv[\exists i\in \{1,\dots,d\}: \forall s\le t\    U_s \neq i]\le d \left(1-\frac1d\right)^t.\]
By putting $t = d \log d - \log (\ve) d$, the right hand side of the previous display is less than $e^{\log(\ve)}=\ve $, finishing the proof.
\end{proof}
\begin{remark}\normalfont It is known (see \cite[Example 12.17]{LPW08} that the \emph{total variation mixing time} of the hypercube is at $\frac12 d \log d$, hence we have a factor $2$ between the separation and tv-mixing time on $H_d$. Comparing it to the estimate in \eqref{eq::sepletv}, this shows that the factor $2$ there can be sharp.
\end{remark}

The following lemma will be used later to determine the spectral gap of the lamplighter chain: (\cite[Corollary 12.6]{LPW08})
\begin{lemma}\label{lem::dtlimlambda}
For a reversible, irreducible and aperiodic Markov chain,
\be\ba\label{eq::la_lim_upper} d_x(t) &\le s_x(t) \le  \frac{\la_*^t}{\pi_{\min}},\\
|\la_2|^t&\le 2 d(t)\ea \ee
with $\pi_{\min}= \min_{y\in \CS} \pi(y)$  and $\la_*= \max\{|\la|: \la \text{ eigenvalue of P, } \la \neq 1\}.$
As a consequence we have\[ \lim_{t\to \infty} d(t)^{1/t}= \la_*.\]
\end{lemma}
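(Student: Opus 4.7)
The plan is to exploit the spectral decomposition of the reversible transition matrix $P$. Since $P$ is self-adjoint on $L^2(\pi)$, it has real eigenvalues $1=\lambda_1>\lambda_2\ge\dots\ge\lambda_N>-1$ (the strict inequalities come from irreducibility and aperiodicity) and an orthonormal basis $f_1\equiv 1, f_2,\dots,f_N$ in $L^2(\pi)$. The workhorse identity I will use is
\[
\frac{P^t(x,y)}{\pi(y)} \;=\; \sum_{i=1}^N \lambda_i^t\, f_i(x) f_i(y),
\]
which I would derive by diagonalizing the symmetric matrix $\pi(x)^{1/2}P^t(x,y)\pi(y)^{-1/2}$. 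The Parseval identity $\sum_i f_i(x)^2 = 1/\pi(x)$ follows from expanding the indicator $\mathbf{1}_{\{x\}}/\pi(x)$ in this basis.

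To obtain the first line of \eqref{eq::la_lim_upper}, subtract the $i=1$ term (which equals $1$) from the spectral expansion to get $1-P^t(x,y)/\pi(y) = -\sum_{i\ge 2}\lambda_i^t f_i(x)f_i(y)$. Bound $|\lambda_i|\le\lambda_*$ for $i\ge 2$, apply Cauchy--Schwarz, and use Parseval:
\[
\left|1-\frac{P^t(x,y)}{\pi(y)}\right|\;\le\;\lambda_*^{\,t}\sqrt{\sum_{i\ge 2}f_i(x)^2}\sqrt{\sum_{i\ge 2}f_i(y)^2}\;\le\;\frac{\lambda_*^{\,t}}{\sqrt{\pi(x)\pi(y)}}\;\le\;\frac{\lambda_*^{\,t}}{\pi_{\min}},
\]
giving $s_x(t)\le \lambda_*^{\,t}/\pi_{\min}$. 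The bound $d_x(t)\le s_x(t)$ is exactly \eqref{eq::tvlesep} from the previous lemma.

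For $|\lambda_2|^t\le 2d(t)$, pick $x^*$ maximizing $|f_2|$. Since $f_2\perp f_1\equiv 1$ in $L^2(\pi)$, we have $\sum_y\pi(y)f_2(y)=0$, so
\[
\lambda_2^{\,t}\,f_2(x^*)\;=\;(P^t f_2)(x^*)\;=\;\sum_y\bigl(P^t(x^*,y)-\pi(y)\bigr)f_2(y).
\]
Using the standard estimate $|\sum_y(\mu(y)-\nu(y))h(y)|\le 2\|h\|_\infty\|\mu-\nu\|_{\TV}$ (immediate from \eqref{eq::tv2}) with $\|f_2\|_\infty=|f_2(x^*)|$, and dividing through by $|f_2(x^*)|$, yields $|\lambda_2|^t\le 2d_{x^*}(t)\le 2d(t)$.

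The limit statement is then a simple consequence, but with one subtlety that is the main point to watch. The upper bound $d(t)\le\lambda_*^{\,t}/\pi_{\min}$ immediately gives $\limsup_t d(t)^{1/t}\le\lambda_*$. One is tempted to use $|\lambda_2|^t\le 2d(t)$ directly for the matching lower bound, but this only yields $\liminf d(t)^{1/t}\ge|\lambda_2|$, which can be strictly smaller than $\lambda_*$ when $|\lambda_N|>|\lambda_2|$. The fix is to apply the bound to $P^2$: its spectrum is $\{\lambda_i^2\}$, so its second-largest eigenvalue in absolute value is exactly $\lambda_*^{\,2}$. This gives $\lambda_*^{\,2t}\le 2 d(2t)$. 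Taking $(2t)$-th roots gives the liminf along even times, and monotonicity $d(t+1)\le d(t)$ (which follows from $\|P^{t+1}(x,\cdot)-\pi\|_{\TV}=\|\sum_y P(x,y)(P^t(y,\cdot)-\pi)\|_{\TV}\le d(t)$) transfers the bound to odd times, completing the proof that $\lim_t d(t)^{1/t}=\lambda_*$.
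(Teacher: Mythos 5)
Your proof is correct, and it reconstructs essentially the standard spectral argument that the paper simply cites from \cite[Eq.\ (12.11), (12.13)]{LPW08}: diagonalize $P$ in $L^2(\pi)$, peel off the top eigenvalue, apply Cauchy--Schwarz and Parseval for the upper bound on $s_x(t)$, and test $P^t(x^*,\cdot)-\pi$ against an extremal eigenfunction for the lower bound on $d(t)$. You also correctly flag the one real subtlety in the limit statement: the displayed bound $|\lambda_2|^t \le 2d(t)$ by itself only yields $\liminf_t d(t)^{1/t} \ge |\lambda_2|$, which can fall short of $\lambda_*$ when the most negative eigenvalue dominates (this is only automatic for lazy chains, where the spectrum is nonnegative). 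Your patch via $P^2$ plus the monotonicity $d(t+1)\le d(t)$ is valid. A slightly cleaner route is to observe that your lower-bound computation uses nothing special about $\lambda_2$: for \emph{any} eigenvalue $\lambda_j \neq 1$ with eigenfunction $f_j$ (which is automatically orthogonal to $f_1\equiv 1$), the identity
\[
\lambda_j^{\,t}\, f_j(x^*) \;=\; \sum_y \bigl(P^t(x^*,y)-\pi(y)\bigr) f_j(y)
\]
at a maximizer $x^*$ of $|f_j|$ gives $|\lambda_j|^t \le 2d(t)$. Choosing $j$ to achieve $|\lambda_j| = \lambda_*$ yields $\lambda_*^{\,t} \le 2d(t)$ directly and sidesteps the even/odd bookkeeping.
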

%\begin{comment}
%Take $\phi$ to be an eigenfunction of $P$ with eigenvalue $\la\neq 1$. Since the eigenfunctions are orthogonal with respect to $\langle .,. \rangle_\pi$, so $\Ev_\pi \phi =0$. Thus we have
%\[ |\la^t \phi(x)| = |P^t \phi(x)| = \left| \sum_{y\in \CS} \left[P^t (x,y) \phi(y) - \pi(y)\phi(y) \right]\right|\le \| \phi\|_{\infty} 2 d(t). \]
%Now picking $x\in \CS$ achieving $\|\phi \|_{\infty}$ yields $|\la|^t \le 2d(t)$, finishing the proof of the assertion of the lemma with inequality  in one direction.
%\end{comment}
\begin{proof}
 Follows from \cite[Equation (12.11), (12.13)]{LPW08}.We note that Lemma \ref{lem::sep_tv} implies that the assertion of Lemma \ref{lem::dtlimlambda} stays valid if we replace $d(t)^{1/t}$ by the separation distance $s(t)^{1/t}$.
 \end{proof}

\section{Mixing times of lamplighter graphs}\label{s:lamp}

In this section, we will use the preliminaries from the previous sections to determine the mixing and relaxation time of the random walk on lamplighter graphs. The intuitive representation of the walk is the following: a lamplighter moves according to a simple random walk on the vertices of a \emph{base graph} $G$. Further, there is an identical lamp attached to each vertex $v\in G$, and each of the lamps is either on or off. We denote the state of the lamp at vertex $v\in G$ by $f_v$. Then, as the lamplighter walks along the base graph, he switches on or off lamps on its path randomly. More precisely, we are analyzing the following dynamics below: one move of the lamplighter walk corresponds to three elementary steps: he randomizes the lamp on its current position, then he moves according to a lazy simple random walk on the base graph, then he randomizes the lamp at its arrival position.

Suppose that $\CG$ is a finite connected graph with vertices $V(\CG)$ and edges $E(\CG)$.  We refer to $\CG$ as the base graph. Let $\CX(\CG) = \{\un f \colon V(\CG) \to \{0,1\}\}$ be the set of markings of $V(\CG)$ by elements of $\{0,1\}$.
 The wreath product $\Z_2 \wr \CG$ is the graph whose vertices are pairs $(\un f,x)$
where $\un f=\left(f_v\right)_{v\in V(\CG)} \in \CX(\CG)$ and $x \in V(\CG)$.
There is an edge between $(\un f,x)$ and $(\un g,y)$ if and only if $(x,y) \in E(\CG)$, and $f_z = g_z$ for all $z \notin \{x,y\}$.  Suppose that $P$ is the transition matrix for lazy random walk on $\CG$.  The lamplighter walk $X^\diamond$ is the Markov chain on $\Z_2\wr \CG$ which moves from a configuration $(\un f,x)$ by
\begin{enumerate}
\item picking $y$ adjacent to $x$ in $\CG$ according to $P$, then
\item updating each of the values of $f_x$ and $f_y$ independently to a uniform random value in $\{0,1\}$.
\end{enumerate}
The state of lamps $f_z$ at all other vertices $z\in \CG$ remain fixed.  It is easy to see that with stationary distribution $\pi_\CG$ for the random walk on $G$, the unique stationary distribution of $X^\diamond$ is the product measure
\[ \pid\big((\un f,x)\big)= \pi_\CG(x) \cdot 2^{-|G|},\]
and $X^\diamond$ is itself reversible.  In this notes, we will be concerned with the special case that $P$ is the transition matrix for the \emph{lazy random walk} on $\CG$.  In particular, $P$ is given by
\begin{equation}
\label{eq::lazy_rw_definition}
P(x,y) := \begin{cases} \frac{1}{2} \text{ if } x = y,\\ \frac{1}{2d(x)} \text{ if } \{x,y\} \in E(\CG), \end{cases}
\end{equation}
for $x,y \in V(\CG)$ and where $d(x)$ is the degree of $x$.
 This assumption guarantees that we avoid issues of periodicity.
\begin{figure}[ht]
 % Requires \usepackage{graphicx}
\includegraphics[height=4cm]{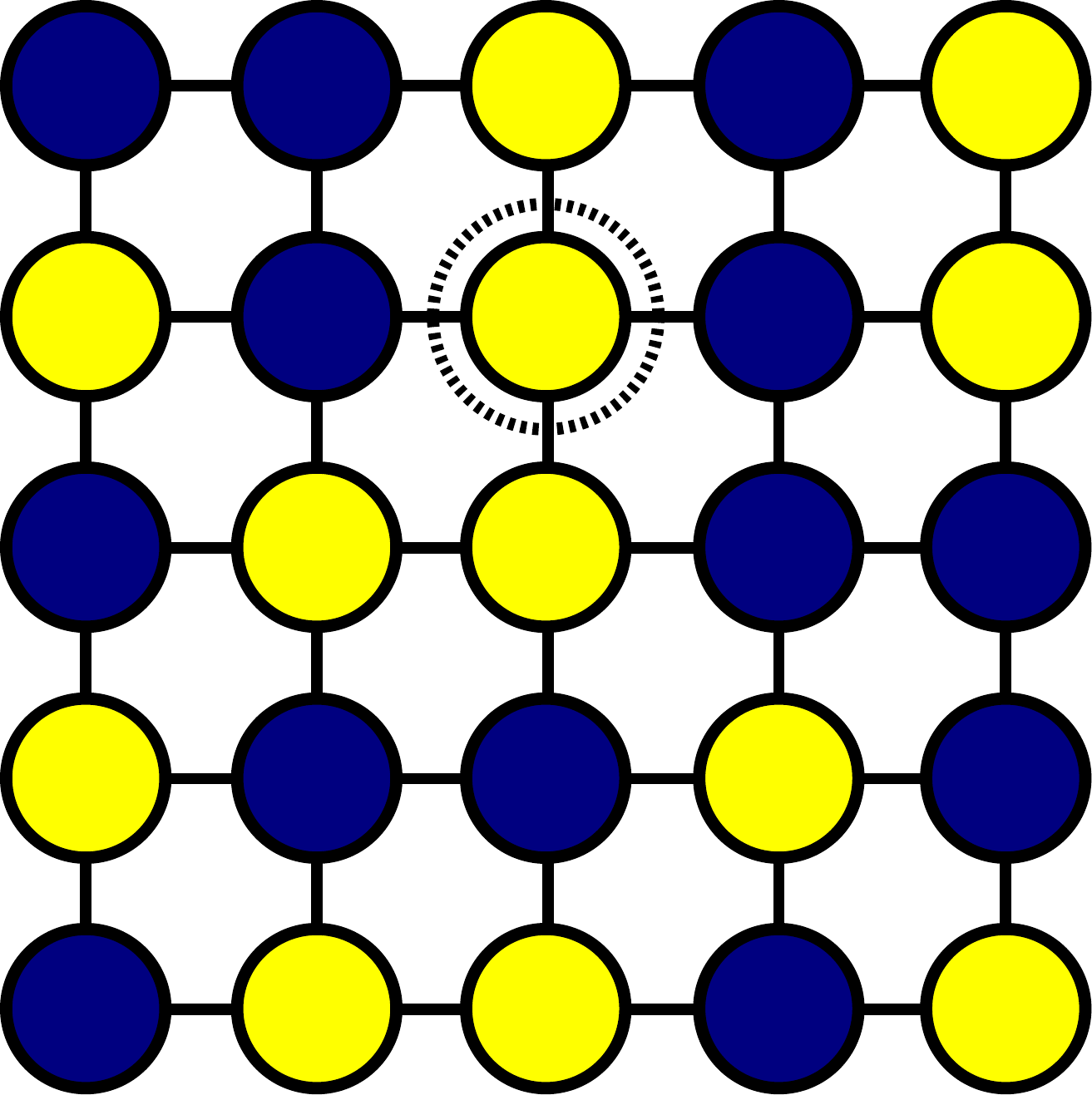}
\caption{A typical state of the lamplighter walk on the $2$-dim torus on $5$ vertices. Lamps that are `on' (resp. `off') are drawn in yellow (resp. blue) and the position of the lamplighter is marked by the dashed circle.}
\end{figure}

We will study below the \emph{total variation} ($TV$) mixing time and the \emph{relaxation time} of these walks. The relaxation time is a more algebraic point of view of mixing,  that looks at the spectral behavior of the transition matrix $P$. Namely, since $P$ is a stochastic matrix, $1$ is the main eigenvalue and all the other eigenvalues of $P$ lie in the complex unit disk. If further the chain is reversible, then the eigenvalues are real and it makes sense to define the relaxation time of the chain by
\[ \trel(G):= \frac{1}{1-\la_2}, \] where $\la_2$ is the second largest eigenvalue of the chain.

In general it is known that for a reversible Markov chain the asymptotic behavior of the relaxation time, the $TV$ and a third notion, the \emph{uniform} mixing time, which is mixing in $\ell_\infty$ norm, can significantly differ, i.e. in terms of the size of the graph $G$ they can have different asymptotics.  More precisely, we have
\[ \trel(G) \le \tmix^{TV}(G, 1/4) \le \tmix^{u}(G, 1/4), \]
see \cite{AF02} or \cite{LPW08}. The lamplighter walk described above is an example where these three quantities have different order of magnitude in terms of $|G|$.
%\begin{comment}
%To understand the behavior of Markov chain sequences, other different notions of mixing times emerged as well, each capturing some different aspect or property of the chain. Aldous \cite{Al82} introduced random stopping times achieving stationary measure. They were studied more by Lov\'asz, Winkler \cite{LW95, LW98}, (E.g. they studied maximum-length-optimal or expectation-optimal stopping times reaching stationary distribution, strong stationary times and forget times.) To find the relation between different notions of mixing is a challenging problem, see \cite{Al82} and the recent papers connecting hitting times to mixing times and stopping rules by Sousi and Peres \cite{PS12} and independently by Oliveira \cite{O12}, or blanket times and cover times to the maxima of Gaussian free fields by Ding, Lee and Peres \cite{DLP11}. For a more comprehensive overview of Markov Chain mixing we refer the reader to the indispensable book \cite{AF02} by Aldous and Fill or \cite{LPW08} by Levin, Peres and Wilmer as our main references. See also the books by H\"aggst\"om \cite{H00}, Jerrum \cite{J03}, or the recent survey by Montenegro and Tetali \cite{MT06}.
%\end{comment}

Throughout, we use the superscript $\diamond$ to specify that a quantity belongs to the lamplighter walk, that is, the underlying graph is $\Z_2\wr G$.
In order to state our general theorems, we first need to review some basic terminology from the theory of Markov chains.
Let $P$ be the transition kernel for a lazy random walk on a finite, connected graph $G$ with stationary distribution $\pi$.

The \emph{maximal hitting time} of $P$ is
\begin{equation}
\label{eqn::thit_definition}
 \thit(G) := \max_{x,y\in V(G)} \E_x[\tau_y],
\end{equation}
where $\tau_y$ denotes the first time $t$ that $X(t) = y$ and $\E_x$ stands for the expectation under the law in which $X(0) = x$.
The random cover time $\tau_{\text{cov}}$ is the first time when all vertices have been visited by the walker $X$, and the cover time $\tcov(G)$ is
\begin{equation}
\label{eqn::tcov_definition}
 \tcov(G) := \max_{x\in V(G)} \E_x[\tau_{\text{cov}}].
\end{equation}

Then we have the following two theorems (from \cite{PR04}):
\begin{theorem}
\label{thm::main_relax}
Let us assume that $G$ is a regular, connected graph.
Then there exist universal constants $0<c_1\le C_1<\infty$ such that the relaxation time of the  lamplighter walk on
$\Z_2\wr G$ satisfies
\begin{align}
 c_1\thit(G)  \leq \trel(\Z_2\wr G) \leq C_1\thit(G), \label{eqn::trel_main}
\end{align}
\end{theorem}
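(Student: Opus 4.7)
The plan is to exploit the product structure $\Z_2 \wr G = \{0,1\}^{V(G)} \times V(G)$ by Fourier-decomposing functions in the lamp coordinate. For $S \subseteq V(G)$, set $\chi_S(\un f) := \prod_{v \in S}(-1)^{f_v}$; every function on $\Z_2 \wr G$ decomposes uniquely as $\psi(\un f, x) = \sum_{S} \chi_S(\un f)\, g_S(x)$. A direct computation using $\Ev[(-1)^{f'}]=0$ for a uniform bit $f'$ shows that the lamp-randomizations kill $\chi_S$ in expectation whenever the walker sits in $S$ (either the pre-move or post-move randomization does it), so
\[
(P^\diamond \psi)(\un f, x) \;=\; \sum_{S} \chi_S(\un f)\, \ind\{x \notin S\}\, (P_{S^c}\, g_S)(x),
\]
where $P_{S^c}(x,y) := P(x,y)\, \ind\{x,y \notin S\}$ is the substochastic kernel obtained from $P$ by killing on entry to $S$. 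Thus $P^\diamond$ is block-diagonal in this basis, and its spectrum is the union of the spectra of the $P_{S^c}$.

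Next I would identify $\la_2(P^\diamond)$. The $S=\emptyset$ block gives $\mathrm{spec}(P)$, and for $S \neq \emptyset$ the block is strictly substochastic. Whenever $S \subset S'$, $P_{(S')^c}$ is a principal submatrix of $P_{S^c}$, so Cauchy interlacing gives $\la_{\max}(P_{(S')^c}) \le \la_{\max}(P_{S^c})$: the maximum over nonempty $S$ is attained on singletons. Combined with the standard reversible-chain bound $\trel(G) \le \thit(G)$ controlling $\la_2(P)$, this yields
\[
1 - \la_2(P^\diamond) \;\asymp\; \min_{v \in V(G)}\bigl(1 - \la_{\max}(P_{V(G)\setminus\{v\}})\bigr).
\]
Since $\max_v \max_x \Ev_x[\tau_v] = \thit(G)$, the theorem reduces to proving the two-sided equivalence $1 - \la_{\max}(P_{V(G)\setminus\{v\}}) \asymp 1/H(v)$ with $H(v) := \max_x \Ev_x[\tau_v]$, uniformly in $v$.

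For one direction, the potential $h_v(x) := \Ev_x[\tau_v]$ on $V(G)\setminus\{v\}$ solves $(I - P_{V(G)\setminus\{v\}}) h_v = \one$, and I would use it (or a truncation) as a test function in the Dirichlet variational formula to conclude $1 - \la_{\max} \le C/H(v)$. For the reverse direction, $\Pv_x[\tau_v > t] = P_{V(G)\setminus\{v\}}^t \one(x)$, and summing over $t$ while expanding $\one$ in the $\ell^2(\pi)$-eigenbasis of the self-adjoint operator $P_{V(G)\setminus\{v\}}$ gives $H(v) \le C'/(1 - \la_{\max})$, with regularity of $G$ keeping $\pi$ uniform so that $L^2$ bounds on the Perron eigenfunction transfer to the $L^\infty$ quantity $H(v)$.

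The main obstacle is precisely this last step: converting the $\ell^2(\pi)$-spectral gap of the killed chain into an $\ell^\infty$ hitting-time estimate requires control on the shape of the principal eigenvector, and this is exactly where the regularity hypothesis on $G$ enters. A conceptually cleaner path, which I would prefer to follow, is to quote the known equivalence $H(v) \asymp 1/(1 - \la_{\max}(P_{V(G)\setminus\{v\}}))$ for reversible chains on regular graphs from the literature (e.g., Aldous--Fill), yielding both directions simultaneously. Combined with the Fourier-block decomposition above, this produces both inequalities $c_1 \thit(G) \le \trel(\Z_2 \wr G) \le C_1 \thit(G)$ with universal constants.
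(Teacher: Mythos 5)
Your proposal is correct but follows a genuinely different route from the paper. The paper proves the upper bound $\trel(\Z_2\wr G)\le C_1\thit(G)$ by constructing the strong stationary time $\tau^\diamond=\tau_{\mathrm{cov}}+\tau_G(X_{\tau_{\mathrm{cov}}})$, bounding its tail via Markov plus geometric blocking and via $\la_G^t$, passing through the separation distance, and then using $\lim_t d(t)^{1/t}=\la_*$ (Lemma~\ref{lem::dtlimlambda}); for the lower bound it defers entirely to a Dirichlet-form test-function argument in \cite{LPW08} or \cite{KP12} and gives no proof. You instead exploit the abelian structure of $\Z_2$ to diagonalize $P^\diamond$ exactly: writing $\psi(\un f,x)=\sum_S\chi_S(\un f)g_S(x)$, the lamp randomizations annihilate $\chi_S$ whenever the walker touches $S$, so $P^\diamond$ decomposes into blocks given by $P$ restricted (Dirichlet-killed) to $S^c$, and Cauchy interlacing identifies $1-\la_2(P^\diamond)=\min\bigl\{1-\la_2(P),\ \min_v\bigl(1-\la_{\max}(P|_{V\setminus\{v\}})\bigr)\bigr\}$. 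This is an exact identity, stronger than what the paper's method yields, and it reduces both bounds to relating the quasi-stationary spectral gap of the chain killed at a single vertex to hitting times. Your computation of the block structure is correct, and the interlacing reduction to singletons is right.

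The remaining ingredient is the equivalence you cite. You are correct that this is where regularity (and laziness) is used, and the direction $1/(1-\la_{\max}(P|_{V\setminus\{v\}}))=\Ev_{\alpha_v}[\tau_v]\le\thit(G)$ is immediate. For the reverse direction, be a little careful: the per-vertex statement $H(v)\asymp 1/(1-\la_{\max}(P|_{V\setminus\{v\}}))$ is slightly stronger than what you need, and it is the version with $\max_v$ that comes out cleanly. Concretely, plugging $h_v=\Ev_\cdot[\tau_v]$ into the Rayleigh quotient (as you propose) gives $1/(1-\la_{\max}(P|_{V\setminus\{v\}}))\ge\Ev_\pi[\tau_v]$, and then one uses the reversible-and-regular estimate $\thit(G)\le 2\max_w\Ev_\pi[\tau_w]$ (a Cauchy--Schwarz bound $|Z_{xv}|\le\sqrt{Z_{xx}Z_{vv}}$ on the fundamental matrix, valid when $\pi$ is uniform) to close the loop. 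This is essentially a repackaging of the Dirichlet-form test-function computation that the paper points to in \cite[Ch.~19.2]{LPW08}: their test function is exactly (a multiple of) your character $\chi_{\{w\}}$ for a worst-case $w$, so your decomposition can be viewed as the full spectral picture behind that one-function trick. One thing your approach buys is the exact identity for $\la_2(P^\diamond)$; one thing it gives up is generality: the Fourier basis $\chi_S$ is special to abelian lamps, so this route does not extend to Theorem~\ref{thm::main_relax2} for general lamp graphs $H$, whereas the strong-stationary-time and Dirichlet-form arguments of \cite{KP12} do.
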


\begin{theorem}\label{thm::main_mixing}
Let $G$ be a regular connected graph.
Then there exist universal constants $0<c_2\le C_2<\infty$ such that the mixing time of the lamplighter walk on $\Z_2\wr G$ satisfies
\be\label{eqn::tmix_main}
 c_2 \tcov(G) \leq \tmix(\Z_2\wr G) \leq C_2 \tcov(G).
\ee
\end{theorem}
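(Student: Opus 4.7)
The plan is to construct a strong stationary time $\tau^\diamond$ for the lamplighter chain $X^\diamond$ with $\Ev[\tau^\diamond]=O(\tcov(G))$; the conclusion $\tmix(\Z_2\wr G)\le C_2\tcov(G)$ then follows from Markov's inequality applied to the strong-stationary-time bound of Theorem~\ref{thm::AD} together with $d^\diamond(t)\le s^\diamond(t)$ from Lemma~\ref{lem::sep_tv}. Let $Y_t$ denote the projection of $X^\diamond_t$ on $G$ (a lazy random walk), let $\tau_{\mathrm{cov}}$ be the cover time of $(Y_s)$, and, given $Y_{\tau_{\mathrm{cov}}}=z$, let $\tau^\star$ be a separation-optimal strong stationary time for the lazy walk on $G$ started at $z$ (provided by Theorem~\ref{thm::AD}); set $\tau^\diamond:=\tau_{\mathrm{cov}}+\tau^\star$. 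The key observation is that at each step the lamplighter refreshes the lamps at its source and target with \emph{fresh} independent fair coins, so conditionally on the trajectory $(Y_s)_{s\le\tau^\diamond}$ the value $f_v(\tau^\diamond)$ is exactly the coin flipped at the most recent visit of $v$, and these coins are independent of one another and of $(Y_s)$. Since every vertex has been visited by $\tau_{\mathrm{cov}}\le\tau^\diamond$, $\un f(\tau^\diamond)$ is uniform on $\{0,1\}^{V(G)}$ and independent of the trajectory; combined with $Y_{\tau^\diamond}\sim\pi_G$ from the strong-stationary property of $\tau^\star$, this makes $\tau^\diamond$ strong stationary for $X^\diamond$. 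Finally, $\Ev[\tau^\diamond]\le\tcov(G)+O(\tmix(G))=O(\tcov(G))$, using the standard inequalities $\tmix(G)=O(\thit(G))$ and $\thit(G)\le\tcov(G)$.

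\textbf{Lower bound.} Set $n:=|V(G)|$. Starting $X^\diamond$ from $(\un 0,x_0)$ with all lamps off, I would use the distinguishing statistic $\phi(\un f,x):=\sum_{v\in V(G)}f_v$. Under $\pid$, $\phi\sim\mathrm{Binomial}(n,1/2)$ with mean $n/2$ and variance $n/4$. Under $X^\diamond$ at time $t$, the same fresh-coin argument shows that conditionally on the trajectory each lamp at a visited vertex is an independent uniform bit while each lamp in the random unvisited set $U(t)$ equals $0$; hence $\phi(t)$ is conditionally $\mathrm{Binomial}(n-|U(t)|,1/2)$ with unconditional mean $n/2-\tfrac12\Ev[|U(t)|]$. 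If one can choose $t=c_2\tcov(G)$ so that $\Ev[|U(t)|]$ is of larger order than $\sqrt n$, with a matching second-moment control on $|U(t)|$, a Chebyshev comparison of the laws of $\phi(t)$ and of $\phi$ under $\pid$ produces a constant total-variation gap, forcing $\tmix(\Z_2\wr G)>t$.

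\textbf{Main difficulty.} The delicate step is controlling $\Ev[|U(t)|]$ at $t$ comparable to $\tcov(G)$ rather than to $\thit(G)$. Since Matthews' bound allows $\tcov(G)/\thit(G)$ to be as large as $\log n$ (for instance on the complete graph or a binary tree), the crude first-moment estimate $\Pv_{x_0}[\tau_v>t]\ge 1-t/\thit(G)$ is uninformative when $t\gg\thit(G)$, and one cannot simply use hitting-time inputs. To cover the regime $\thit(G)\ll t\le c_2\tcov(G)$, I would pass to a carefully chosen subset $S\subseteq V(G)$ witnessing the Matthews-type lower bound on $\tcov(G)$, and exploit regularity and reversibility of the walk on $G$ to control the second moment of $|U(t)\cap S|$; a Paley--Zygmund inequality then guarantees that $|U(t)\cap S|$ is of order $|S|$ with positive probability for $t$ a small multiple of $\tcov(G)$, which in turn forces $\Ev[|U(t)|]$ to dominate $\sqrt n$. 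This second-moment argument, where the regularity of $G$ is used most essentially, is the main technical ingredient.
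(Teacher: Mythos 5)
Your upper bound is essentially the paper's argument: construct $\tau^\diamond = \tau_{\mathrm{cov}} + \tau_G(X_{\tau_{\mathrm{cov}}})$, check it is a strong stationary time for $\Z_2\wr G$, and deduce $\tmix(\Z_2\wr G) = O(\tcov(G))$ via $d^\diamond\le s^\diamond\le\Pv[\tau^\diamond>t]$ together with $\tmix(G)\le\thit(G)\le\tcov(G)$. (The paper bounds the two tails directly — Markov for $\tau_{\mathrm{cov}}$ and separation-optimality plus submultiplicativity for $\tau_G$ — rather than passing through $\Ev[\tau^\diamond]$, but the substance is the same.)

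For the lower bound, however, your route is genuinely different from the paper's, it is not completed, and it would be much harder to carry out. The distinguishing-statistic plan hinges on showing that at $t=c_2\tcov(G)$ the unvisited set satisfies $\Ev[|U(t)|]\gg\sqrt{n}$ \emph{with second-moment control}, uniformly over all regular connected $G$. You correctly identify that this is the sticking point in the regime $\thit(G)\ll t\lesssim\tcov(G)$, but the Matthews/Paley--Zygmund sketch you offer is only a plan, and it is far from clear it succeeds in full generality: for graphs built from weakly connected blocks, $|U(t)|$ can have variance of order $n^2$ at times comparable to $\tcov(G)$, in which case Chebyshev on $\phi$ fails as stated and a more careful conditional argument is needed. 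The paper avoids all of this with a short, clean observation (Lemma~\ref{lem::sep_lower_bound}): the state $\bigl((1-f_0(v))_{v\in G},\,x\bigr)$ is a halting state for $\tau_{\mathrm{cov}}$, hence
\[
s^\diamond_{(\un f_0,x_0)}(t)\;\ge\;\Pv_{(\un f_0,x_0)}[\tau_{\mathrm{cov}}>t],
\]
so the \emph{separation} distance of the lamplighter chain dominates the tail of the cover time directly. Plugging in $t=6\tmix(\Z_2\wr G)$, invoking $s(2t)\le 4 d(t)$ from Lemma~\ref{lem::sep_tv} and submultiplicativity gives $\Pv[\tau_{\mathrm{cov}}>6\tmix(\Z_2\wr G)]\le 1/2$, and a geometric block argument yields $\tcov(G)\le 12\,\tmix(\Z_2\wr G)$. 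No moment estimates on $|U(t)|$, no choice of a distinguishing statistic, and no use of regularity beyond what the upper bound already needs. You should replace your lower-bound sketch with this separation-distance argument; as written, the key second-moment step is a genuine gap, not a routine verification.
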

\subsection{Proofs} Here we modify the proof that can be found in \cite{KP12} for more general lamp graphs to the setting where the lamp graph is $\Z_2$.
We start by constructing an `almost' stationary time $\tau^\diamond$ for the lamplighter walk. More specifically, the first refreshment of a lamp at site $v$ is a strong stationary time on the copy at $v$ of the two-state Markov chain on $\{0,1\}$, and we stop the chain when all lamps reach their individual stopping time, i.e. exactly when we cover all vertices. At
$\tau_{\text{cov}}$, the lamps are already stationary, but the position of the walker not necessarily.

%To make this notion more precise, let us start with introducing the notation
%\be\label{def::localtime} L_v(t)=2\sum_{i=0}^{t}\ind(X_i=v)-\delta_{X_0,v}-\delta_{X_t,v}\ee for the number of moves on the lamp graph $H_v, v\in G$ by the walker up to time $t$.  Slightly abusing terminology, we call it the local time at vertex $v\in G$.
%Let us further denote the random walk with transition matrix $Q$ on $H$ by $Z$. Since the moves on the different lamp graphs $H_v, v\in G$ are taken independently given $L_v(t), v\in G$, we can define for each $v\in G$ an independent copy of the chain $Z$, denoted by $Z_v$, running on $H_v$. Thus, the position of the lamplighter chain at time $t$ can be described as
%\[ (\un F_t, X_t ) = \left( \left(Z_v(L_v(t)) \right)_{v\in G}, X_t \right)\]
%Below we will use copies of a strong stationary time $\tau_H$ for each $v\in G$, meaning that $\tau_H(v)$ is defined in terms of $Z_v$, and given the local times $L_v(t)$, $\tau_H(v)$-s are independent of each other.

It is easy to see that the state of the lamps are already stationary when $\tau_{\text{cov}}$ has happened, that is, for any starting state $(\un f_0,  x_0)$
 \be \label{eq::taucov} \Pv_{(\un f_0,x_0)}\left[ X^\diamond_t =(\un f, x), \tau_{\text{cov}}=t \right] =  2^{-|G|}\cdot  \Pv_{(\un f_0,x_0)}\left[ X_t=x, \tau_{\text{cov}}= t\right]. \ee
%\begin{lemma}\label{lem::opt_sst_lamplighter}
%Let $\tau_H$ be any strong stationary time for the Markov chain on $\Hh$. Take the conditionally independent copies of $\left(\tau_H(v)\right)_{v\in G}$ given the local %times $L_v(t)$, realized on the lampgraphs $H_v$-s and define the stopping time $\tau^\diamond$ for $X^\diamond$ by
%\be\label{eq::tauhg} \tau^\diamond:= \inf\left\{t: \forall v\in G: \tau_H(v)\le L_v(t)\right\}. \ee
%Then, for any starting state $(\un f_0,  x_0)$ we have
%\be \label{eq::tau_1_eq} \Pv_{(\un f_0,x_0)}\left[ X^\diamond_t =(\un f, x), \tau^\diamond=t \right] =  \prod_{v\in G}\!\! \pi_H(f_v)\cdot  \Pv_{(\un f_0,x_0)}\left[ X_t=x, \tau^\diamond= t\right]. \ee
Further, if a lamp is in state $x$, then $1-x$ is a halting state for the two state Markov chain. From this it is not hard to see that the vectors $\left( (1-f_0(v))_{v\in G},y\right)$ are halting state vectors for $\tau_{\text{cov}}$ and initial state $(\un f_0, x_0)$ for every $x_0, y\in G$.
%\end{lemma}
%\begin{remark}\normalfont\label{rem::01lamptau}
%If $H=\{0,1\}$, then $\tau_{H}\equiv 1$, i.e. once the site $v\in G$ is visited, the lamp on $v$ become stationary.  Thus, $\tau^\diamond\equiv\tau_{\text{cov}}$, the random cover time of the walk. Also, if the lamp starts in state $f_0(v)$, then $1-f_0(v)$ is a halting state for $\tau_H$, thus the next Corollary holds in this case.
%\end{remark}
%The proof of Lemma \ref{lem::opt_sst_lamplighter} is not hard but quite long, and can be found in \cite{KP12}. We continue with a corollary of the lemma:
\begin{lemma}\label{lem::sep_lower_bound}
%Let $\tau_H$ be a strong stationary time for the Markov chain on $\Hh$ which has a halting state $h(z)$ for any $z\in H$, and define $\tau^\diamond$ as in Lemma \ref{lem::opt_sst_lamplighter}.
For the separation distance on the lamplighter chain $\Z_2\wr G$ the following lower bound holds:
\[  s_{(\un f_0,x_0)}^\diamond(t) \ge \Pv_{(\un f_0,x_0)}\left[ \tau_{\mathrm{cov}} > t \right].\]
\end{lemma}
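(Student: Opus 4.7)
The plan is to lower-bound the defining maximum
\[ s^\diamond_{(\un f_0,x_0)}(t)=\max_{y^\star}\Bigl(1-\frac{(P^\diamond)^t((\un f_0,x_0),y^\star)}{\pid(y^\star)}\Bigr) \]
by exhibiting a single well-chosen $y^\star$. The natural candidate is a halting state for $\tau_{\mathrm{cov}}$ from $(\un f_0,x_0)$: letting $\un g_0:=(1-f_0(v))_{v\in G}$ be the complementary lamp configuration, I take $y^\star=(\un g_0,y)$ for some $y\in V(G)$ chosen momentarily.

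For any $y$, the halting-state property recorded just above the lemma gives $\{X^\diamond_t=(\un g_0,y)\}\subseteq\{\tau_{\mathrm{cov}}\le t\}$, so
\[ (P^\diamond)^t\bigl((\un f_0,x_0),(\un g_0,y)\bigr)=\Pv_{(\un f_0,x_0)}\bigl[X^\diamond_t=(\un g_0,y),\,\tau_{\mathrm{cov}}\le t\bigr]. \]
Extending \eqref{eq::taucov} from $\{\tau_{\mathrm{cov}}=t\}$ to $\{\tau_{\mathrm{cov}}\le t\}$ (by summing over $\tau_{\mathrm{cov}}=s\le t$) then yields
\[ \Pv_{(\un f_0,x_0)}\bigl[X^\diamond_t=(\un g_0,y),\,\tau_{\mathrm{cov}}\le t\bigr]=2^{-|G|}\Pv_{(\un f_0,x_0)}[X_t=y,\,\tau_{\mathrm{cov}}\le t], \]
and dividing by $\pid((\un g_0,y))=2^{-|G|}\pi_\CG(y)$ gives the clean ratio
\[ \frac{(P^\diamond)^t((\un f_0,x_0),(\un g_0,y))}{\pid((\un g_0,y))}=\frac{\Pv[X_t=y,\,\tau_{\mathrm{cov}}\le t]}{\pi_\CG(y)}. \]

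It remains to choose $y$. Summing the numerator over $y\in V(G)$ gives $\Pv[\tau_{\mathrm{cov}}\le t]$, and $\sum_y\pi_\CG(y)=1$, so the $\pi_\CG$-weighted average of the above ratios over $y$ equals $\Pv[\tau_{\mathrm{cov}}\le t]$. Hence there exists $y^\star\in V(G)$ for which the ratio is at most $\Pv[\tau_{\mathrm{cov}}\le t]$, and plugging the corresponding $(\un g_0,y^\star)$ into the max defining $s^\diamond$ produces
\[ s^\diamond_{(\un f_0,x_0)}(t)\ge 1-\Pv[\tau_{\mathrm{cov}}\le t]=\Pv[\tau_{\mathrm{cov}}>t], \]
which is the claim.

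The step requiring the most care is the extension of \eqref{eq::taucov} to the event $\{\tau_{\mathrm{cov}}\le t\}$, since the paper states the identity only on the atomic event $\{\tau_{\mathrm{cov}}=t\}$. The extension rests on the refresh rule: every visit to a vertex $v$ overwrites the lamp there with a fresh independent uniform bit, so only the \emph{last} refresh before time $t$ at each site matters. Therefore, conditional on any walker trajectory with $\tau_{\mathrm{cov}}\le t$, the lamp vector at time $t$ is uniform on $\{0,1\}^{|G|}$ and independent of that trajectory, which is exactly what the sum of \eqref{eq::taucov} over $\tau_{\mathrm{cov}}=s\le t$ requires.
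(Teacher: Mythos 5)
Your proof is correct and follows essentially the same route as the paper: exhibit the halting-state configuration $(\un g_0,y)$, extend \eqref{eq::taucov} to the event $\{\tau_{\mathrm{cov}}\le t\}$, and note that some $y$ makes the resulting ratio at most $\Pv[\tau_{\mathrm{cov}}\le t]$ (the paper picks the minimizer and observes the ratio is $\le 1$; your averaging argument is the same observation spelled out). If anything you are slightly more careful than the paper in justifying the extension of \eqref{eq::taucov} from $\{\tau_{\mathrm{cov}}=t\}$ to $\{\tau_{\mathrm{cov}}\le t\}$, which the paper does silently.
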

\begin{proof}
Observe that reaching the halting state vector $((1-f_0(v))_{v\in G}, x)$ implies the event $\tau_{\mathrm{cov}}\le t$ so we have
\be \label{eq::cor_est} \frac{\Pv_{(\un f_0, x_0)}\left[ X^\diamond_t= ((1-f_0(v))_{v\in G}, x)\right] }{ \pi_G(x)2^{-|G|} } =
\frac{\Pv_{(\un f_0, x_0)}\left[ X^\diamond_t= ((1-f_0(v))_{v\in G}, x), \tau_{\mathrm{cov}}\le t  \right] }{ \pi_G(x) 2^{-|G|} } \ee
 Now pick a vertex $x=x_{x_0,t}\in G$ which minimizes $\Pv\left[ X_t=x_{x_0,t}| \tau_{\mathrm{cov}}\le t\right]/ \pi_G(x_{x_0,t})$. This quotient is less than $1$ since both the numerator and the denominator are probability distributions on $G$. Then, using this and \eqref{eq::taucov}, $1$ minus the right hand side of \eqref{eq::cor_est} equals
\[ 1- \frac{\Pv_{(\un f_0,x_0)}\left[ X_t = x_{x_0,t} |  \tau^\diamond \le t\right] \Pv_{(\un f_0,x_0)}[\tau^\diamond\le t] }{ \pi_G(x_{x_0,t})} \ge 1-\Pv_{(\un f_0,x_0)}\left[ \tau^\diamond \le t \right].  \]
The separation distance is larger than the left hand side of \eqref{eq::cor_est} by definition, and the proof of the claim follows.
%Note that the proof only works if $\tau_H$ has a halting state and thus it is separation-optimal.
\end{proof}

With this lemma in hand, we can already prove the lower bound in Theorem \ref{thm::main_mixing}.

\begin{proof}[Proof of the lower bound for mixing time of $\Z_2\wr G$]
%Note that from Remark \ref{rem::01lamptau} we have $\tau^\diamond\equiv\tau_{\text{cov}}$.
Let us set $t:=6 \tmix(\Z_2\wr G)$.
Then Lemma \ref{lem::sep_lower_bound} and Lemma \ref{lem::sep_tv} yields us the following sequence of inequalities:
\[  \Pv_{(\un f_0,x_0)}\!\left[ \tau_{\text{cov}}\! >\! 6 \tmix(\Z_2\wr G) \right]\!\le s_{(\un f_0,x_0)}^\diamond\!\left(6 \tmix(\Z_2\wr G)\right)\! \le\!  4 d^\diamond(3 \tmix(\Z_2\wr G)) \le \frac{1}{2}, \]
where in the last inequality we used the sub-multiplicativity property \eqref{eq::dsubmulti}. Note that this estimate is independent of the starting state. Comparing the left and right hand sides, we conclude that we can run the chain in blocks of $6 \tmix(\Z_2\wr G)$, and in each block the graph $G$ is covered with probability at least $1/2$. Thus, $\tau_{\text{cov}}$ can be stochastically dominated by $6 \tmix(\Z_2\wr G) \mathrm{Geo}(1/2)$. Taking expected value yields
\[ \tcov(G) \le 12 \tmix(\Z_2\wr G),\]
finishing the lower bound with $c_2=1/12$.
\end{proof}

\begin{proof}[Proof of the upper bound for mixing time of $\Z_2\wr G$]
The proof of the upper bound in Theorem \ref{thm::main_mixing} is very similar, we just need to make the position of the lamplighter also stationary. We can achieve this by waiting an extra strong stationary time $\tau_G$ after $\tau^\diamond\equiv \tau_{\text{cov}}$ has happened. The existence of a \emph{separation optimal} strong stationary time on $G$ is ensured by Theorem \ref{thm::AD}.

More precisely, we have
\begin{lemma}\label{lem::opt_sst_lamplighter_2}
Let $\tau_G(x)$ be a separation-optimal strong stationary time for $G$ starting from $x\in G$ and define $\tau^\diamond_2$ by
\be\label{eq::tauhg2} \tau^\diamond:= \tau_{\mathrm{cov}}+\tau_G(X_{\tau_{\mathrm{cov}}}), \ee
where the chain is re-started at $\tau_{\mathrm{cov}}$ from $(\un F_{\tau^\diamond},X_{\tau^\diamond})$, run independently of the past and $\tau_G$ is measured in this walk.
Then, $\tau^\diamond$ is a strong stationary time for $\Z_2 \wr G$.
\end{lemma}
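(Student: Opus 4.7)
The plan is to verify the defining identity \eqref{eq::defsst} for $\tau^\diamond$ directly, by showing that at time $\tau^\diamond$ the walker's position is $\pi_G$-distributed, the lamps are i.i.d.\ uniform on $\{0,1\}$, and both are independent of the value of $\tau^\diamond$.

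The first step is to record what is known at time $\tau_{\mathrm{cov}}$. Identity \eqref{eq::taucov} says that, conditional on $\{\tau_{\mathrm{cov}} = r,\ X_{\tau_{\mathrm{cov}}} = x_1\}$, the lamp vector $\un F_{\tau_{\mathrm{cov}}}$ is uniform on $\{0,1\}^{|G|}$; only the walker's position fails to be stationary at $\tau_{\mathrm{cov}}$. Next I would invoke the strong Markov property to restart the lamplighter chain at $\tau_{\mathrm{cov}}$ from $(\un F_{\tau_{\mathrm{cov}}}, x_1)$, so that $\tau_G(x_1)$, the separation-optimal strong stationary time supplied by Theorem \ref{thm::AD}, is a strong stationary time for the walker-projection of this fresh chain. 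The defining property of $\tau_G$ then yields that $X_{\tau^\diamond}$ has law $\pi_G$ independently of $\tau_G(x_1)$, hence independently of $\tau^\diamond - \tau_{\mathrm{cov}}$.

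The main step is to verify that the lamps are still uniform at time $\tau^\diamond$. For this I would condition on the entire walker trajectory $(X_s)_{\tau_{\mathrm{cov}} \le s \le \tau^\diamond}$ together with the value of $\tau^\diamond$. Given this information, the lamp at each vertex $v \in G$ falls into one of two cases: either $v$ is not visited during $[\tau_{\mathrm{cov}}, \tau^\diamond]$, and then $F_v(\tau^\diamond) = F_v(\tau_{\mathrm{cov}})$ is uniform by the previous paragraph; or $v$ is visited, and then $F_v(\tau^\diamond)$ equals the most recent of the independent uniform refreshes performed at $v$, hence is again uniform on $\{0,1\}$. All these sources of randomness (the initial lamp values at $\tau_{\mathrm{cov}}$ and the refreshes along the trajectory) are mutually independent, so conditionally on the trajectory $\un F_{\tau^\diamond}$ is i.i.d.\ uniform on $\{0,1\}^{|G|}$, and therefore unconditionally uniform and independent of the trajectory.

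Putting everything together, conditional on $\{\tau_{\mathrm{cov}} = r,\ \tau^\diamond = s\}$ the pair $(\un F_{\tau^\diamond}, X_{\tau^\diamond})$ has joint law $2^{-|G|} \otimes \pi_G = \pid$; summing over $r$ yields \eqref{eq::defsst} for $\tau^\diamond$. The one subtlety I anticipate is confirming that $\un F_{\tau^\diamond}$ is independent of $X_{\tau^\diamond}$: knowing the endpoint of the walker might appear to bias which lamps have been recently refreshed. This is precisely why I condition on the whole trajectory above, since the argument in the previous paragraph shows that each lamp is uniform \emph{for every} realization of the trajectory, so marginalizing preserves both uniformity and independence from the walker's endpoint.
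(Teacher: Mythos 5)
The paper does not present its own proof of this lemma---it defers to \cite{KP12}---so there is no in-text argument to compare against. Your proof is correct and is the natural argument that the cited reference pursues (specialized here to $\Z_2$ lamps): condition on $\{\tau_{\mathrm{cov}}=r,\ X_{\tau_{\mathrm{cov}}}=x_1\}$ to read off from \eqref{eq::taucov} that $\un F_{\tau_{\mathrm{cov}}}$ is i.i.d.\ uniform; restart the chain via the strong Markov property and use the defining property of the separation-optimal time $\tau_G$ so that $X_{\tau^\diamond}\sim\pi_G$ independently of $\tau_G(x_1)$; and verify that the extra refreshes on $[\tau_{\mathrm{cov}},\tau^\diamond]$ preserve uniformity of the lamps. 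The step you flag as the one subtlety is indeed the crux, and you handle it correctly: conditioning on the full walker trajectory over $[\tau_{\mathrm{cov}},\tau^\diamond]$ and on $\tau^\diamond$, each lamp is uniform---either it keeps its (uniform, independent-of-the-fresh-walk) value from $\tau_{\mathrm{cov}}$, or it equals the last of the i.i.d.\ refresh coins, and these two sources of randomness are independent of each other and of the trajectory---so the conditional law of $\un F_{\tau^\diamond}$ is $2^{-|G|}$ uniformly over trajectories, and marginalizing gives both uniformity and independence from $X_{\tau^\diamond}$. Two cosmetic remarks: the statement's ``re-started $\dots$ from $(\un F_{\tau^\diamond},X_{\tau^\diamond})$'' is evidently a typo for $(\un F_{\tau_{\mathrm{cov}}},X_{\tau_{\mathrm{cov}}})$, which you implicitly correct; and a lamp at $v$ is refreshed during $[\tau_{\mathrm{cov}},\tau^\diamond]$ exactly when $v$ lies on the trajectory segment $(X_s)_{\tau_{\mathrm{cov}}\le s\le\tau^\diamond}$ (each move refreshes both the departure and arrival lamp), so your ``visited'' dichotomy matches the dynamics precisely.
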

The proof of this lemma is omitted here since it is not difficult but quite long, see \cite{KP12}.

With this lemma in hand, we can apply \eqref{eq::sep_ineq} -- the relation between separation distance and strong stationary times --  to get
\be\label{eq::dtupper} d_{(\un f_0,x_0)}^\diamond(t)\le s_{(\un f_0,x_0)}^\diamond(t) \le \Pv_{(\un f_0,x_0)}\left[ \tau_{\text{cov}}+\tau_G(X_{\tau_{\text{cov}}})>t\right].\ee
Now set $t=8\tcov(G)+10 \tmix(G)$.
Then by a union bound the right hand side in \eqref{eq::dtupper} is at most
 \be\label{eq::stupper} \Pv_{x_0}\left[ \tau_{\text{cov}}> 8 \tcov(G )\right] + \max_{v\in G}\Pv_v\left[ \tau_G> 10 \tmix(G)\right]. \ee
 The first term on the right hand side is at most $1/8$ by Markov's inequality, and for the second term, since $\tau_G$ is separation-optimal, (i.e. it is equality in \eqref{eq::sep_ineq}), we can put
 \[ \Pv_v\left[ \tau_G> 10 \tmix(G)\right]= s_G(10 \tmix(G)) \buildrel {\star}\over{\le} 4 d_G(5 \tmix(G)) \buildrel {\triangle}\over{\le} 4 \left(\frac{2}{4}\right)^5 = \frac18, \]
 uniformly over the starting state $v$. In the inequality with $\star$ we used Lemma \ref{lem::sep_tv}, and  the one with $\triangle$ we used the sub-multiplicativity \eqref{eq::dsubmulti}.
 Combining this estimate with \eqref{eq::stupper} and \eqref{eq::dtupper} and the fact that $\tmix(G)\le  \thit(G)\le \tcov(G)$ for all reversible chains (see \cite[Chapter 10.5,11.2]{LPW08}),  yields that
 \[ \tmix(\Z_2\wr G) \le 8\tcov(G)+10 \tmix(G)\le 18 \tcov(G).\]This finishes the proof of the upper bound with $C_2=18$. \end{proof}
Now we turn to investigate the relaxation time of $\Z_2\wr G$. To do so, we will use Lemma \ref{lem::dtlimlambda} and investigate the behavior of $s(t)^{1/t}$ as $t \to \infty.$
\begin{proof}[Proof of the upper bound for relaxation time of $\Z_2\wr G$]
To prove the upper bound, we will estimate the tail behavior of the strong stationary time $\tau^\diamond=\tau_{\text{cov}}(G)+\tau_G(X_{\tau_{\text{cov}}})$ in Lemma \ref{lem::opt_sst_lamplighter_2}, relate it to $s^\diamond(t)$, the separation distance on $\Z_2\wr G$. We will use $\Pv$ for $\Pv_{(\un f, x)}$ for notational convenience.
Combining \eqref{eq::sep_ineq} by union bound we have
\begin{align} s_{(\un f, x)}^{\diamond}(t)&\le \Pv_{(\un f,x)}\left[\tau^\diamond>t\right]  \label{eq::stestimate} \\
&\le \ \Pv[\tau_{\text{cov}}(G)> t/2]\label{eq::cov_not}\\
&+ \max_{y\in G}\Pv_{y}\left[\tau_G>t/2\right] \label{eq::fourth_term}
\end{align}
We write $\tau_w$ for the hitting time of $w\in G$. We claim that the first term \eqref{eq::cov_not} can be bounded from above by:
\be\label{eq::not_cov_estimate} \Pv[\tau_{\text{cov}}(G)>t/2] \le \Pv[\exists w: \tau_w>t/2] \le |\CG| 2 e^{-\tfrac{\log 2}{4}\tfrac{t}{\thit(G)} },\ee
where $\thit(G)$ is the maximal hitting time of the graph $\CG$, see \eqref{eqn::thit_definition}.
To see this, use Markov's inequality on $\tau_w$ to obtain that for all starting states $v\in \CG$ we have
$\Pv_v[ \tau_w> 2\thit(G) ] \le 1/2 $,  and then run the chain on $G$ in blocks of $2\thit(G)$. In each block we hit $w$ with probability at least $1/2$, so we have
\[ \Pv_v[ \tau_w> K (2\thit(G)) ] \le \frac{1}{2^K}.\]
To get a similar bound for arbitrary $t$,  we can move from $\fl{ t/2\thit(G)}$ to $t/2\thit(G)$ by adding an extra factor of $2$, and \eqref{eq::not_cov_estimate} immediately follows by a union bound.

For the second term \eqref{eq::fourth_term} we prove the following upper bound:
\be\label{eq::not_tauG_estimate}  \Pv_v\left[ \tau_{G}\ge t/2 \right] \le |G| e^{-\tfrac{t}{2\trel(G)}}.\ee

First note that according to Lemma \ref{lem::dtlimlambda}, the tail of the strong stationary time $\tau_{G}$ is driven by $\la_{G}^t$ with $\la_G$ being the second largest eigenvalue of the lazy random walk on $G$. More precisely, using the first line in \eqref{eq::la_lim_upper} we have that for any initial state $v\in G$:
\[  \Pv_v\left[ \tau_G \ge t/2 \right] \le s_{G}\left(t/2\right) \le  \frac{1}{\pi_{\min}(G)} \la_{G}^{t/2}
\le |G| \exp\left\{-\frac{(1-\la_{G}) t}{2}\right\},  \]
where we used that regularity of $G$ implies $\pi_{\min}(G)=|G|^{-1}$, and the inequality $1-x\le e^{-x}$ for $x=1-\la_G$. Next we combine the bounds in \eqref{eq::not_cov_estimate} and \eqref{eq::not_tauG_estimate} on \eqref{eq::stestimate} with the second inequality in \eqref{eq::la_lim_upper} to estimate the second largest eigenvalue on $\Z_2\wr G$ as follows:
 \be\label{eq::la_estimate}  |\la_2|^t\le 2d^\diamond(t) \le 2 s^\diamond(t) \le  4 |\CG| \exp\left\{-\frac{\log2}{4}\frac{t}{\thit(G)} \right\} +2|G| \exp\left \{- \frac{t}{2\trel(G)} \right\}.
\ee
In the final step we apply Lemma \ref{lem::dtlimlambda}: we  take the power $1/t$  and limit as $t$ tends to infinity with fixed graph size $|\CG|$ on the right hand side of \eqref{eq::la_estimate} to get an upper bound on $\la_2$.  Then we use that $(1-e^{-x})\le x +o(x)$ for small $x$ and obtain the bound on $\trel(\Z_2\wr G)$ finally:
\[ \trel(\Z_2\wr G) \le \max \left \{ \frac{4}{\log 2} \thit(G), 2 \trel(G) \right\}.\]
 Then, taking into account that  $\trel(G)\le c \tmix(G)\le C \thit(G)$ holds for any lazy reversible chain (see e.g. \cite[Chapter 11.5,12.2]{LPW08}), we can ignore the second term.
\end{proof}

\begin{proof}[Proof of the lower bound for the relaxation time]
We do not include the proof of the lower bound of the relaxation time in these lecture notes since it is based on a somewhat different technique:  it relies on the analysis of the Dirichlet form of the lamplighter walk, with an appropriately chosen test-function $f$. For more details see \cite[Chapter 19.2]{LPW08} for $0-1$ lamps or \cite{KP12} for general lamp graphs.
\end{proof}

\subsection{Generalized lamplighter walks}
One can think of a generalisation of lamplighter walks of the following form: instead of $0-1$ lamps, put at each site of the base graph $G$ an identical copy of machine, whose states are represented by a \emph{lamp graph} $H$ with a fixed Markov chain transition matrix $Q$ on $H$. The walker then does the following: as he follows a simple random walk on the base graph, he modifies the state of the machines along his path randomly according to the transition matrix $Q$. The state space in this case is a vector of the states of each machine plus the position of the walker. We denote the corresponding graph by $H\wr G$. One step of the lamplighter walk is then: refresh the machine of the departure site, move one step on the base graph, refresh the machine on the arrival site. With this dynamics, one can show that the product measure of the stationary measure of $Q$ over $v\in G$ multiplied by $\pi_G$ is stationary for this dynamics and the chain is reversible. We denote the resulting graph by $H\wr G$.
We can characterise the relaxation time of such walks as follows, from \cite{KP12}:
\begin{theorem}
\label{thm::main_relax2}
Let us assume that $\CG$ and $\Hh$ are connected graphs with $\CG$ regular and the Markov chain on $\Hh$ is lazy, ergodic and reversible.
Then there exist universal constants $0<c_1,C_1<\infty$ such that the relaxation time of the  generalized lamplighter walk on
$\Hh\wr \CG$ satisfies
\begin{align}
 c_1  \leq \frac{\trel(\Hh\wr \CG)}{  \thit(\CG) +|\CG|\trel(\Hh)}  \leq C_1, \label{eqn::trel_main2}
\end{align}
\end{theorem}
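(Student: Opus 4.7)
The plan is to adapt the proof of Theorem \ref{thm::main_relax} by letting the lamp chain $Q$ on $H$ play the role that the single-step refresh played in the $\Z_2$ case. Throughout I would rely on the inequality $|\la_2|^t \le 2\,d^\diamond(t) \le 2\,s^\diamond(t)$ from Lemma \ref{lem::dtlimlambda} to convert control of the separation distance into control of the spectral gap; polynomial prefactors in $|G|$ and $|H|$ disappear upon taking the $t$-th root and letting $t\to\infty$, so only exponential rates matter.

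\textbf{Upper bound.} I would build a strong stationary time $\tau^\diamond$ for $H\wr G$ with three ingredients: (i) cover every vertex of $G$, so that every lamp chain has started; (ii) at each $v\in G$, wait until the $H$-chain at $v$, run on the subordinated clock of visits to $v$, has reached its own separation-optimal strong stationary time supplied by Theorem \ref{thm::AD}; (iii) after (ii) has finished, attach an independent separation-optimal time $\tau_G$ on $G$ to mix the walker's position. Component (i) contributes a tail of the form $|G|\exp(-ct/\thit(G))$ exactly as in \eqref{eq::not_cov_estimate}, and (iii) contributes $|G|\exp(-t/(2\trel(G)))$ as in \eqref{eq::not_tauG_estimate}. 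The new component (ii) is estimated by combining $s_H(k)\le |H|\exp(-k/\trel(H))$ from Lemma \ref{lem::dtlimlambda} with a block-Markov concentration showing that once $t\gg \thit(G)$, the visit count $N_v(t)$ to every $v$ is at least $ct/|G|$ with exponentially small failure probability; a union bound over $v$ then gives a tail of the form $|G|\cdot|H|\exp(-ct/(|G|\trel(H)))$. Plugging into $|\la_2|^t\le 2\,\Pv[\tau^\diamond>t]$, taking $t$-th roots and letting $t\to\infty$, and using $\trel(G)\le C\,\thit(G)$, yields $\trel(H\wr G)\le C_1(\thit(G)+|G|\trel(H))$.

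\textbf{Lower bound.} I would prove the two matching lower bounds separately via the variational formula $1/\trel=\min_f \CE^\diamond(f,f)/\var_{\pid}(f)$. For the $|G|\trel(H)$ term, take $f(\un f,x)=\phi(f_v)$ with $\phi$ a second eigenfunction of $Q$ at a fixed $v\in G$, normalized so that $\var_{\pi_H}(\phi)=1$. A single step of the lamplighter dynamics refreshes $f_v$ only when the walker sits at $v$ or steps onto $v$, which happens with stationary probability of order $1/|G|$; hence $\CE^\diamond(f,f)=O(1/|G|)\,\CE_Q(\phi,\phi)=O(1/(|G|\trel(H)))$ while $\var_{\pid}(f)=1$, yielding $\trel(H\wr G)\ge c\,|G|\trel(H)$. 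For the $\thit(G)$ term I would reuse the Dirichlet-form test function constructed in \cite[Chapter 19.2]{LPW08} and generalized in \cite{KP12} (a function depending on a single lamp value combined with an indicator that the walker lies in a carefully chosen region), giving $\trel(H\wr G)\ge c'\,\thit(G)$. Summing these two bounds closes the argument.

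\textbf{Main obstacle.} The delicate step is component (ii) of the upper bound: I need a uniform-in-start-state exponential tail on $\max_{v\in G}(\tau_H^{(v)}-N_v(t))^+$, where $\tau_H^{(v)}$ is the separation-optimal $H$-stopping time for lamp $v$ and $N_v(t)$ its visit count. The visit counts are correlated across $v$, so concentration has to be established through a block argument (after $\thit(G)$, each block of length $2\thit(G)$ produces a visit to $v$ with probability $\ge 1/2$, giving stochastic domination by a binomial) followed by a union bound over $v\in G$. Verifying that the resulting $\tau^\diamond$ is a genuine strong stationary time, in the spirit of Lemma \ref{lem::opt_sst_lamplighter_2}, is bookkeeping but must be handled carefully since the different lamp clocks are driven by the same walker trajectory.
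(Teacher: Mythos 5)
The paper does not actually prove Theorem \ref{thm::main_relax2}; it only sketches the construction (an ``almost'' stationary time from per-vertex stopping times $\tau_H(v)$ measured on the lamp-clocks, followed by an independent $\tau_G$) and defers to \cite{KP12}, explicitly flagging the ``local-time structure of the base graph'' as the new ingredient. Your overall architecture matches that sketch, and your lower-bound Dirichlet-form argument is the standard one. The gap is precisely in the local-time step of the upper bound.

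In component (ii) you claim the block argument --- ``after $\thit(G)$, each block of length $2\thit(G)$ produces a visit to $v$ with probability $\ge\frac12$, giving stochastic domination by a binomial'' --- shows $N_v(t)\ge ct/|G|$ with exponentially small failure probability. It does not. That argument only gives $N_v(t)\succeq\mathrm{Bin}\bigl(\lfloor t/(2\thit(G))\rfloor,\tfrac12\bigr)$, hence $N_v(t)\gtrsim t/\thit(G)$ whp, which is much weaker than $t/|G|$ whenever $\thit(G)\gg|G|$ (e.g.\ $G=C_n$, where $\thit\asymp n^2$). Feeding that into $s_H(k)\le|H|\e{-k/\trel(H)}$ yields a tail $\lesssim|G|\,|H|\exp\{-ct/(\thit(G)\trel(H))\}$, and after taking the $t$-th root you would only obtain $\trel(H\wr G)\le C\,\thit(G)\,\trel(H)$. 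This can exceed $\thit(G)+|G|\trel(H)$ by an unbounded factor (take $G=C_n$ and $\trel(H)\asymp n$: the first is order $n^3$, the second order $n^2$), so the argument as written does not prove the stated upper bound.

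To close the gap one must exploit that the return time $T$ to $v$ from $v$ has \emph{mean} $\E_v[T]=1/\pi(v)=|G|$ (Kac's formula), while only its \emph{tail} is governed by $\thit(G)$, i.e.\ $\Pv_v[T>s]\le 2\e{-cs/\thit(G)}$. Then $N_v(t)$ is a renewal process with these inter-arrivals, and a Chernoff/MGF bound (using $\E[\e{\la T}]\le 1+C\la|G|$ for $\la\le c/\thit(G)$) gives $\Pv\bigl[N_v(t)<ct/|G|\bigr]\le\e{-c't/\thit(G)}$. Splitting $\Pv[\tau_H^{(v)}\text{-time not reached by }t]\le\Pv[\sigma_H^{(v)}>ct/|G|]+\Pv[N_v(t)<ct/|G|]$, taking a union bound over $v$, and only then extracting the exponential rate, produces the tail $|G|\,|H|\exp\{-ct/(|G|\trel(H))\}+|G|\exp\{-c't/\thit(G)\}$ and hence $\trel(H\wr G)\le C_1(\thit(G)+|G|\trel(H))$. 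Your block-by-block coin-flip scheme discards the fact that once the walk is at $v$, revisits are frequent (on the $|G|$ scale, not the $\thit$ scale), and that is exactly what is needed here.

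Two smaller remarks: step (i) (covering $G$) is subsumed by step (ii), since waiting for every $\tau_H(v)\ge 1$ already forces at least one visit to every vertex, so it need not be treated separately; and in the lower bound you should verify that the Dirichlet form of $f(\un f,x)=\phi(f_v)$ on $H\wr\CG$ really satisfies $\CE^\diamond(f,f)\asymp\pi_G(v)\,\CE_Q(\phi,\phi)$ --- this uses regularity of $\CG$ so that $\pi_G(v)=1/|G|$ and the fact that a single lamplighter step updates at most two lamps.
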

\begin{theorem}\label{thm::main_mixing2}
Assume that the conditions of Theorem \ref{thm::main_relax2} hold.
Then there exist universal constants $0<c_2,C_2<\infty$ such that the mixing time of the  generalized lamplighter walk on $\Hh\wr \CG$ satisfies
\be\label{eqn::tmix_main2}\begin{aligned}
 &c_2 \big( \tcov(G)+\trel(H) |G| \log |G| + |G|\tmix(H)\big) \leq \tmix(\Hh\wr \CG), \\
 &\tmix(H\wr G) \leq C_2 \left( \tcov(\CG) +|\CG|\tmix(\Hh,\frac{1}{|\CG|})\right).
\end{aligned}
\ee
If further the Markov chain is such that
\begin{description}
\item[(A)] \label{ass::sst_exist} There is a strong stationary time $\tau_H$ for the Markov chain on $H$ which possesses a halting state $h(x)$ for every initial starting point $x\in H$,
\end{description}
then the upper bound of \eqref{eqn::tmix_main2} is sharp.
\end{theorem}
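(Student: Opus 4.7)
The strategy mirrors Theorem \ref{thm::main_mixing}, replacing the one-visit-per-lamp stopping time by a multi-visit construction tailored to the mixing properties of $H$.

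For the upper bound $\tmix(H\wr G)\le C_2(\tcov(G)+|G|\tmix(H,1/|G|))$, I would condition on the walker's trajectory on $G$: because the lamps at distinct vertices evolve independently given the sequence of visit times, the total-variation distance to $\pid$ splits into the walker's TV distance on $G$ (controlled by $\tmix(G)\le\tcov(G)$) plus the sum of the marginal lamp TV distances. If by time $T$ every vertex has been visited at least $\tmix(H,1/|G|)$ times, each lamp is within $1/|G|$ of $\pi_H$ in TV, and a union bound over the $|G|$ lamps yields total error $O(1)$. Setting $T=C_2(\tcov(G)+|G|\tmix(H,1/|G|))$ then suffices: after $\tcov(G)$ the walker has mixed on $G$, and regularity gives each vertex the stationary frequency $1/|G|$, so blocks of length $O(|G|)$ revisit any fixed vertex with $\Omega(1)$ probability; thus $O(\tmix(H,1/|G|))$ blocks per vertex suffice in expectation, with a geometric tail plus union bound giving uniform concentration.

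The lower bound $c_2(\tcov(G)+\trel(H)|G|\log|G|+|G|\tmix(H))$ arises from three independent obstructions. The $\tcov(G)$ term adapts the halting-state argument of Lemma \ref{lem::sep_lower_bound} verbatim: if some vertex has not been visited, its lamp remains at its initial state, and projecting onto that coordinate lower-bounds the TV distance by $\Pv[\tau_{\text{cov}}>t]$. The $|G|\tmix(H)$ term reflects that the walker visits a typical vertex only $\sim T/|G|$ times by time $T$ (by regularity), so if $T/|G|<\tmix(H)$ the marginal of some lamp differs from $\pi_H$ by $\Omega(1)$ in TV. The $\trel(H)|G|\log|G|$ term follows by combining the relaxation lower bound $\trel(H\wr G)\ge c|G|\trel(H)$ from Theorem \ref{thm::main_relax2} with the general Poincaré-type inequality $\tmix\ge\tfrac12(\trel-1)\log(1/(2\pid_{\min}))$ (see \cite[Theorem 12.4]{LPW08}); since $\pid_{\min}=\pi_{H,\min}^{|G|}/|G|\le 1/|G|$, this produces the $\log|G|$ factor.

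For the sharpness under assumption \textbf{(A)}, the halting-state hypothesis provides via Theorem \ref{thm::AD} a separation-optimal strong stationary time $\tau_H$ for $H$ whose tail is controlled by $s_H(t)\le\la_*(H)^t/\pi_{H,\min}$ through Lemma \ref{lem::dtlimlambda}. In the upper-bound construction the crude ``$\tmix(H,1/|G|)$ visits per vertex'' requirement is then replaced by ``$\tau_H$ has elapsed on each lamp's local clock'', and a coupon-collector analysis across the $|G|$ local clocks produces a strong stationary time of expectation $O(\tcov(G)+|G|\tmix(H)+|G|\trel(H)\log|G|)$, matching the lower bound up to constants. The main technical obstacle will be the $\trel(H)|G|\log|G|$ lower-bound term, which cannot be produced by a direct coupling argument and essentially relies on invoking the spectral lower bound of Theorem \ref{thm::main_relax2} together with the Poincaré-type relation between $\tmix$ and $\trel$.
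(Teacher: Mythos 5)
Your strategy diverges substantially from the paper's. The paper (following \cite{KP12}) constructs the upper bound entirely via strong stationary times: a separation-optimal $\tau_H(v)$ at each vertex $v$, an ``almost stationary'' time when all of them have elapsed, and then a final $\tau_G$ to randomize the walker, with the upper bound read off from the tail of this compound time through \eqref{eq::sep_ineq} and Lemma \ref{lem::sep_tv}. The lower bound under assumption \textbf{(A)} then exploits the halting states exactly as in Lemma \ref{lem::sep_lower_bound}. You instead propose a direct total-variation argument conditioning on the walker's trajectory. That route can be made to work (it is the style of \cite{PR04}), though you should track that a union bound over $|G|$ lamps each with error $1/|G|$ only gives total error of order $1$, so the precision parameter must be $1/(4|G|)$ rather than $1/|G|$; this is harmless since the two quantities are of the same order. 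The more serious difference is at the $\trel(H)|G|\log|G|$ lower-bound term.

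There is a genuine gap there. You invoke a ``Poincar\'e-type inequality $\tmix \ge \tfrac12(\trel-1)\log(1/(2\pid_{\min}))$'' from \cite{LPW08}, but no such lower bound exists. The spectral inequalities go the other way: the $\log(1/\pi_{\min})$ factor appears in the \emph{upper} bound $\tmix \le \trel \log(1/(\ve\pi_{\min}))$, whereas the lower bound is only $\tmix(\ve) \ge (\trel - 1)\log(1/(2\ve))$, with no dependence on $\pi_{\min}$. Combining the latter with $\trel(H\wr G) \ge c|G|\trel(H)$ from Theorem \ref{thm::main_relax2} yields only $\tmix(H\wr G) \ge c'|G|\trel(H)$, which loses the required $\log|G|$ factor. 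The $\log|G|$ must instead come from a product-chain phenomenon: conditionally on the visit profile, the $|G|$ lamps form (approximately) a product of $|G|$ independent $H$-chains each run for about $T/|G|$ steps, and the mixing time of a product of $n$ copies of a chain is bounded below by a constant times $\trel \log n$ (this is the content of \cite[Theorem 20.7]{LPW08} and its proof via the distinguishing eigenfunction statistic $\sum_v f_2(F_v)$). Executing this requires concentration of the local times at a typical vertex around $T/|G|$, which regularity provides, but it is a Wilson's-method/second-moment argument, not a spectral-gap-to-mixing-time implication. Without replacing the nonexistent Poincar\'e inequality by this product-chain argument, the $\trel(H)|G|\log|G|$ term in the lower bound is unproven.
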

The proofs above for $0-1$ lamps can be modified to work for general lampgraphs $H$. In this case, we also have to construct an `almost' stationary time similar to $\tau_{\mathrm{cov}}$ and a true stationary time $\tau^\diamond$.  The first can be done by using copies of a separation-optimal $\tau_H(v)$, $v\in G$, such that each $\tau_H(v)$ is measured only using the transition steps of the chain on the machine $H_v$ at $v\in G$. Then we wait until all of the $\tau_H(v)$-s have happened. One can then show that this time is `almost' stationary in the sense that reaching it, the state of the lamp-graphs are stationary, but the position of the walker is not. A similar estimate to that in Lemma \ref{lem::sep_lower_bound} gives a lower bound on the separation distance.  Adding an extra $\tau_G$ again gives a `true' strong stationary time $\tau^\diamond$.

In most estimates for the mixing and relaxation time of $H\wr G$ we can use these two stopping times, but there are new terms arising: one has to estimate the local-time structure of the base graph and also the behaviour of $\tau_H$-s. The proofs are worked out in \cite{KP12}.

We mention that the upper and lower bound on the mixing time for $H\wr G$ do match for a wide selection of $H$ and $G$, but not in general. It remains an open problem to give a general formula for the mixing time.

\section{Varopoulos-Carne long range estimate}
In this section we move on to give a general bound on transition probabilities of SRW on graphs. Later, we will use this estimate to determine the speed of RW on different groups.
%\subsection{The Varopoulos-Carne estimate}
Let $P = (p(x,y))$ be a transition probability matrix on state space $\CS$. Assume reversibility, i.e., that $\pi(x) > 0$ and $\pi(x) p(x,y) = \pi(y) p(y,x)$ for all $x,y \in \CS$.
\\

We may consider  $\CS$ as the vertex set of an undirected graph where $x,y$ are adjacent iff $p(x,y)>0$. Let $\rho(x,y)$ denote the graph distance in $\CS$. We assume $\CS$ is locally finite (each vertex has finite degree).
We now state the \emph{Varopoulos-Carne long-range estimate}:
\begin{theorem}[Varopoulos-Carne]\label{thm::varcarne}
$\forall$ $x,y \in \CS$ and $\forall t\in \N$,
\begin{equation} \label{vc}
p^t(x,y) \le 2\sqrt{\frac{\pi(y)}{\pi(x)}} \cdot\mathbb{P}(S_t \ge \rho(x,y)) \le 2 \sqrt{\frac{\pi(y)}{\pi(x)}} e^{\frac{-\rho^2(x,y)}{2t}} \,,
\end{equation}
where $(S_t)$ is simple random walk on $\Z$.
\\
\end{theorem}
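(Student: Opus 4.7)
The plan is to exploit the Chebyshev polynomial identity that encodes the simple random walk on $\Z$, and to pair it with the observation that polynomials of $P$ of low degree cannot connect vertices that are far apart. Recall that the Chebyshev polynomials $T_k$ satisfy $T_k(\cos\theta) = \cos(k\theta)$. Starting from the elementary identity $\cos^t\theta = \Ev[\cos(S_t\theta)]$ (obtained by expanding $((e^{i\theta}+e^{-i\theta})/2)^t$), one gets the polynomial identity
\[
x^t = \Ev\bigl[T_{|S_t|}(x)\bigr] = \sum_{k=0}^t \Pv(|S_t|=k)\,T_k(x),
\]
valid on $[-1,1]$ and hence as an identity of polynomials. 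I would then substitute the transition operator $P$ for $x$, obtaining $P^t = \sum_{k=0}^t \Pv(|S_t|=k)\, T_k(P)$ as an identity of matrices.

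The first key step is the geometric one: since $T_k$ is a polynomial of degree $k$, the matrix $T_k(P)$ is a linear combination of $I,P,\ldots,P^k$, and $P^j(x,y)=0$ whenever $j<\rho(x,y)$. Therefore $T_k(P)(x,y)=0$ for $k<\rho(x,y)$, so only terms with $k\ge\rho(x,y)$ survive in the expansion of $P^t(x,y)$.

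The second key step bounds each surviving coefficient $|T_k(P)(x,y)|$. Here I use reversibility: $P$ is self-adjoint on $\ell^2(\pi)$ with spectrum in $[-1,1]$, and $|T_k|\le 1$ on $[-1,1]$, so the spectral theorem (trivial in the finite case and standard via functional calculus in general) yields $\|T_k(P)\|_{\ell^2(\pi)\to\ell^2(\pi)}\le 1$. Writing $\pi(x)T_k(P)(x,y) = \langle T_k(P)\delta_y,\delta_x\rangle_\pi$ and applying Cauchy-Schwarz together with $\|\delta_z\|_{\ell^2(\pi)}^2=\pi(z)$ gives
\[
|T_k(P)(x,y)| \le \sqrt{\pi(y)/\pi(x)}.
\]

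Combining the two steps,
\[
p^t(x,y) \le \sqrt{\pi(y)/\pi(x)}\sum_{k\ge \rho(x,y)}\Pv(|S_t|=k) = \sqrt{\pi(y)/\pi(x)}\cdot \Pv(|S_t|\ge \rho(x,y)),
\]
and the right-hand side equals $2\sqrt{\pi(y)/\pi(x)}\,\Pv(S_t\ge \rho(x,y))$ by the symmetry of $S_t$ (assuming $\rho(x,y)\ge 1$; the case $\rho=0$ is the trivial bound $p^t(x,y)\le 1$). The second inequality in \eqref{vc} is then immediate from Hoeffding's inequality applied to $S_t$, a sum of $t$ i.i.d.\ $\pm 1$ variables, which gives $\Pv(S_t\ge r)\le e^{-r^2/(2t)}$. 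The main conceptual hurdle is the Chebyshev identity and recognizing why it must be combined with the distance-truncation property of $T_k(P)$; once that pairing is in place, the operator-norm bound and Hoeffding's estimate are routine.
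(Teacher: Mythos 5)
Your proof follows essentially the same route as the paper: both rest on the Chebyshev-polynomial identity $x^t=\sum_{k}\Pv(|S_t|=k)\,T_k(x)$, the observation that $T_k(P)(x,y)=0$ whenever $k<\rho(x,y)$, and the $\ell^2(\pi)$-contraction bound $|T_k(P)(x,y)|\le\sqrt{\pi(y)/\pi(x)}$, followed by the Chernoff--Hoeffding tail estimate for $S_t$. The only minor divergence is that the paper first truncates the state space to the finite ball $\{z:\rho(x,z)\le t\}$ so that the spectral bound is applied to a finite matrix, whereas you invoke the spectral theorem / functional calculus for the self-adjoint contraction $P$ on $\ell^2(\pi)$ directly; both are valid.
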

\begin{remark}\normalfont The Varopoulos-Carne estimate gives good bounds on transition probabilities between vertices that are far away from each other. Another, short-distance estimate is the following, that can be found in various forms in the literature, see  e.g.\ \cite[Theorem 17.17]{LPW08}.
Let $P$ be the transition matrix of lazy random walk on a graph of maximal degree $\Delta$. Then
\[\big|P^t(x,x)-\pi(x)\big|\le \frac{\sqrt{2} \Delta^{5/2}}{\sqrt{t}}. \]
\end{remark}
\begin{proof}[Proof of Theorem \ref{thm::varcarne}]
We start by reducing to the finite case.
Fix $t$ and $x$. Denote $\hat{\CS} = \{z: \rho(x,z) \le t\}$.
\\
Now $\forall z,w \in \hat{\CS}$, consider the modified transition matrix
\begin{displaymath}
   \hat{p}(z,w) =  \left\{
     \begin{array}{lr}
      p(z,w) & : z \ne w\\
       p(z,z) + p(z,\CS - \hat{\CS}) & : z = w
     \end{array}
   \right.
\end{displaymath}
Then $\hat{p}$ is reversible on $\hat{\CS}$ with respect to $\pi$.
Since in $t$ steps, the walk started at $x$ cannot exit $\hat{\CS}$, it suffices to prove the inequality for $\hat{\CS}$ in place of $\CS$, so we may assume that $\CS$ is finite.

Let $\xi = \cos{\theta} = \frac{e^{i\theta}+e^{-i\theta}}{2}$. Taking the $t$-th power, we see that the coefficients of the binomial expansion are exactly the transition probabilities of SRW on $\Z$, which gives
$$\xi^t = \sum_{k=-t}^{t} \mathbb{P}(S_t = k) e^{ik\theta}.$$
By taking the real part, we get
\begin{equation} \label{bin}
\xi^t= \sum_{k=-t}^{t} \mathbb{P}(S_t = k) \cos{k\theta}.
\end{equation}
Now denote $Q_k(\xi) = \cos{k\theta}$. Observe that $Q_0(\xi) = 1, Q_1(\xi) = \xi$, and the identity   $$\cos{(k+1)\theta}+  \cos{(k-1)\theta}=2\cos{\theta}  \cos{k\theta}$$ yields that   $Q_{k+1}(\xi) + Q_{k-1}(\xi) = 2 \xi Q_k(\xi)$ for all $k \ge 1$. Thus induction gives that $Q_k$ is polynomial of degree $k$ for all $k \ge 1$; these are the celebrated Chebyshev polynomials. Further, since $Q_k(\xi)=\cos(k \theta)$ for $\xi=\cos\theta \in[-1,1]$ implies the fact that $|Q_k(\xi)| \le 1$ for $\xi\in[-1,1]$.
\\
Using the symmetry of cosine function, we can rewrite (\ref{bin}) in the form
$$\xi^t = \sum_{k=-t}^{t} \mathbb{P}(S_t = k) Q_{|k|}(\xi),$$
which is an identity between polynomials. Applying it to the transition probability matrix $P$ on $\CS$, we infer that
\begin{equation}
\label{mat}
P^t = \sum_{k=-t}^{t} \mathbb{P}(S_t = k) Q_{|k|}(P)
\end{equation}

We know that all eigenvalues of $P$ are in $[-1,1]$. Furthermore, the eigenvalues of $Q_k(P)$ have the form $Q_k(\lambda)$, where $\lambda$ is an eigenvalue of $P$, so they are also in $[-1,1]$. Hence $\|Q_k(P) v\|_\pi \le \|v\|_{\pi}$ for any vector $v$, where $\|v\|_\pi^2=\sum_{x\in\CS} v(x)^2 \pi(x).$
\\
Using this contraction property we can write
\[Q_k(P)(x,y) = \frac{\langle \delta_x, Q_k(P) \delta_y \rangle_{\pi}}{\pi(x)} \le \frac{\|\delta_x\|_\pi \|\delta_y\|_\pi}{\pi(x)} \le \frac{\sqrt{\pi(x)}\sqrt{\pi(y)}}{\pi(x)} = \sqrt{\frac{\pi(y)}{\pi(x)}}.\]
Note that $P^k(x,y)\!=\!0\  \forall k<\rho(x,y)$ implies $Q_k(P)(x,y) = 0$ for $k < \rho(x,y)$.

Hence, by (\ref{mat}), we have
\[p^t(x,y) = \sum_{|k| \ge \rho(x,y)} \mathbb{P}(S_t = k) Q_{|k|}(P)(x,y) \le  \sum_{|k|\ge \rho(x,y)} \mathbb{P}(S_t = k) \sqrt{\frac{\pi(y)}{\pi(x)}} ,\]
proving the first inequality in (\ref{vc}).
The second inequality in (\ref{vc}) is an application of the well-known Bernstein-Chernoff bound
\begin{equation} \label{bernstein}
\Pv(S_t \ge R) \le e^{-R^2/(2t)}.
\end{equation}
For the reader's convenience we recall the proof. Suppose that $\Pv(X=1)=1/2=\Pv(X=-1)$. Then
\[
\Ev(e^{\lambda X})  = \frac{ e^{\lambda} +  e^{-\lambda}}{2} = \sum_{k=0}^{\infty} \frac{\lambda^{2k}}{(2k)!}\\ \le \sum_{k=0}^{\infty} \frac{\lambda^{2k}}{2^k k!} = e^{\lambda^2/2}.
\]
Therefore,
\[\Ev(e^{\lambda S_{t}})= (\Ev(e^{\lambda X}))^t \le e^{t\lambda^2/2}.\]
Finally, by Markov's inequality,,
\[\Pv(S_t \ge R) = \Pv(e^{\lambda S_t} \ge e^{\lambda R}) \le
e^{-\lambda R} \cdot e^{t\lambda^2/2}.\]
Optimizing, we choose $\lambda = R/t$, and (\ref{bernstein}) follows.

\end{proof}
\section{Speed of RW on groups and harmonic functions}
In this section we characterize the speed of random walk on groups in terms of bounded harmonic functions. For more on this topic see Chapter 13 in  \cite{LPbook}.

Let $G$ be a (finite or countable) group, with finite generating set $S$. We assume $S = S^{-1}$, and $d = |S|$. Recall the right-Cayley graph on $G$ is given by $x\sim y \Leftrightarrow y\in xS$, and the corresponding simple random walk (SRW) has
\be\label{def::srw}p_{\sss{SRW}}(x, y) =
\begin{cases}
\frac{1}{d}, & \mbox{ for } y\in xS,\\
0, & \mbox{ otherwise.}
\end{cases}\ee
We define the lazy random walk (LRW) to avoid periodicity issues:
\be\label{def::lazy}p(x,y)=
\begin{cases}
\frac12, & \mbox{ for } y=x \\
\frac{1}{2d}, & \mbox{ for } y\in xS,\\
0, & \mbox{ otherwise.}
\end{cases}\ee
That is, the transition matrix $P= (P_{\sss{SRW}} +I)/2$.
We call $e\in G$ the origin, and denote $\ldist$ the graph distance in $G$.  We write simply $\ldist(e, x)=|x|$.
\begin{definition}
The speed of random walk on $G$ is defined as
\[  v(G):=\lim_{n\to \infty} \frac{\Ev|X_n|}{n}= \text{a.s. } \lim_{n\to \infty}\frac{ |X_n|}{n}.\]
\end{definition}
This definition is valid, since the distance is subadditive by the triangle inequality and the transitivity of $G$:
\[  \ldist(e, X_{n+m}) \le \ldist(e, X_{n}) + \ldist(X_n, X_{n+m}) \ {\buildrel {d}\over =} \  \ldist(e, X_{n}) + \ldist(e, X_{m}).\]
Taking expectation yields that the expected distance is submultiplicative, hence the speed exist.

The main goal of here is to characterize \emph{when is the speed positive?} But first some examples:
\begin{example} For every $d$,  $v(\Z^d)=0$. This is easy to see since $(\Ev|X_n|)^2\le \Ev(|X_n|^2)= \sum_{i=1}^n \Ev(|Y_i|^2)= n$ by denoting $Y_i$ the independent unit length increment of the walk at step $i$.
\end{example}
\begin{example} The speed on the infinite $d$-ary tree $\Pi_d$ is $v(\Pi_d)=\frac{d-2}{d}$. In each step of the walk, there are $d-1$ edges increasing the distance from the root by $+1$ and exactly $1$ edge decreasing the distance, hence the speed is ${d-2}{d}$ for non-lazy RW and $\frac{d-2}{2d}$ for lazy RW.
\end{example}
The third example needs some definitions:
\begin{definition}A state of the lamplighter group $G_d$ on $\Z^d$ is defined as $(S,x)$ where $S \subset\Z^d$ is a finite subset  of vertices and $x\in \Z^d$ is the position of a \emph{marker} or lamplighter.  Every state in $G_d$ is connected to $2d+1$ other states in $G_d$: either the marker moves to a uniformly chosen neighbour of $x$ or it switches the lamp at $x$: i.e.\ removes $x$ from $S$ if $x\in S$, and adds $x$ to $S$ if $x\notin S$. The origin in this walk is $(\emptyset, \un 0)$, i.e. all lamps off, marker at the origin.
\end{definition}
The set $S$ describes which `lamps' are on, and the marker can switch lamps only along his path. He either moves on the base graph $\Z^d$ or switches the lamp where he currently is.
\begin{example} The speed of the lamplighter walk on $G_1$ and $G_2$ is zero, while $v(G_d)>0$ for $d\ge3$.
\end{example}
\begin{proof} For $G_1$ we can use the marginal distribution of the marker is just a SRW on $\Z$, hence its range up to time $n$ is whp less than $c \sqrt{n} \log n$. Thus, any state that the lamplighter can reach in $n$ steps has at most only a connected set of on-lamps of size $c \sqrt{n} \log n$. This has distance at most $K \sqrt{n} \log n$ from the origin, since the marker can just walk along its range, switch off each lamp that is on and return to the origin, taking at most $K \sqrt{n} \log n$ steps for some $K>0$.

For $G_2$, the range of SRW on $\Z^2$ is whp $n/\log n$, so the same argument can be applied to show that the speed is zero.

For $d>3$, the range of SRW on $\Z^d$ is linear in $n$, and with positive probability there are going to be a linear number of lamps on, hence the speed is positive, too.
\end{proof}
\subsubsection*{Discussion}
We see that it is not the growth rate that characterizes the speed: trees and lamplighter groups both grow exponentially. What does characterize the speed?
the answer is given by \emph{bounded harmonic functions}.

\subsection{Bounded harmonic functions and tail $\sigma$-algebras}
We start with a definition:
\begin{definition}
We say that a bounded function $u: G \to \R$ is harmonic for the simple random walk on $G$ if
\[ u(x) = \frac{1}{d} \sum_{y\sim x} u(y),\]
that is we have  $u= P_{\sss{SRW}}u=Pu$.
\end{definition}
We define the tail $\sigma$-algebra as $\mathcal T= \bigcap_n \sigma(X_n, X_{n+1}, \dots)$. $\CT$ contains all events which are independent of the trajectory up to any fixed finite time.
Tail events can easily generate harmonic functions, we list some examples:
\begin{enumerate}
%\item Is the speed taking the value $\xi$?
\item On $\Pi_d$, does the RW end up eventually in a given sub-branch of the tree?
\[ u_1(x):=\Pv_x( \text{RW ends in a given sub-branch of the tree}) \]
\item On $G_d$ with $d\ge3$, is the lamp at $x$ eventually on?
\[ \ba u_2(x)&:=\Pv_x( \text{the lamp at $y$ is going to be eventually on} ) \\
u_3(x)&:= \Pv( \text{the lamps in the subset $A$ are all going to be eventually on})
\ea\]
\end{enumerate}
One can easily argue that $u_1, u_2, u_3$ are non-constant by moving the starting point $x$ further and further away from the points / sets under consideration and using transience properties of the marker.
\begin{definition}
We call $f=f(X_0, X_1, X_2, \dots)$ a \emph{tail-function} if changing finitely many values in the trajectory $(X_0, X_1, X_2, \dots)$ does not change the value of $f$.
\end{definition}
\begin{claim}
Every tail function generates a bounded harmonic function by
\[ u_f(x)= \Ev_x(f(X_0, X_1, X_2, \dots))\]
for random walk on groups or for lazy chains.
\end{claim}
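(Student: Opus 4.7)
The plan is to verify two things: $u_f$ is bounded, and $u_f = Pu_f$. Boundedness is immediate, since if $|f|\le M$ then $|u_f(x)| \le M$ for every $x\in G$. The substance is harmonicity, and it rests on two ingredients: a shift-invariance consequence of the tail hypothesis, and the Markov property of the walk.

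First I would observe that for a tail function $f$ one has
\[ f(X_0, X_1, X_2, \dots) = f(X_1, X_2, X_3, \dots) \]
as random variables, since $f$ depends only on the asymptotic behavior of its input. The paper's phrasing ``changing finitely many coordinates does not change $f$'' is literally weaker than full shift invariance, but for the motivating examples (the eventual subbranch of the tree walk, the eventual state of a given lamp, and the like) the events are genuinely shift-invariant, and more generally the tail and invariant $\sigma$-algebras coincide up to $\Pv_x$-null sets for sufficiently ergodic chains. Taking this identification for granted,
\[ u_f(x) = \Ev_x\bigl[f(X_0, X_1, X_2, \dots)\bigr] = \Ev_x\bigl[f(X_1, X_2, X_3, \dots)\bigr]. \]

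To finish, I would condition on $X_1$ and invoke the Markov property. Conditional on $X_1 = y$, the sequence $(X_1, X_2, X_3, \dots)$ under $\Pv_x$ has the law of a chain $(Y_0, Y_1, \dots)$ started at $y$, so
\[ \Ev_x\bigl[f(X_1, X_2, \dots) \mid X_1 = y\bigr] = \Ev_y\bigl[f(Y_0, Y_1, \dots)\bigr] = u_f(y). \]
Averaging over $y$ with weights $P(x,y)$ yields $u_f(x) = \sum_y P(x,y) u_f(y) = (Pu_f)(x)$, which is the identity $u = Pu$ from the definition of harmonic. The argument is purely Markovian and uses neither the laziness nor the group structure beyond time-homogeneity; the main delicate step, which I would spell out more carefully, is the shift-invariance reduction in the first display.
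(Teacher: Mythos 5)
Your boundedness remark and the Markov-property bookkeeping at the end are fine and agree with the paper. The gap is in the first display, $f(X_0,X_1,\dots)=f(X_1,X_2,\dots)$, which you flag as delicate and then take for granted. The tail property — insensitivity to changing \emph{finitely many} coordinates — is strictly weaker than shift-invariance: shifting a sequence changes infinitely many coordinates, and the two notions genuinely diverge for periodic chains. For non-lazy SRW on $\Z$ the event ``$X_n$ is even for all sufficiently large even $n$'' is a tail function in the paper's sense, yet $u_f(x)=\Pv_x[\cdot]=\mathbf{1}\{x\ \mathrm{even}\}$, and $(Pu_f)(x)=\tfrac12 u_f(x-1)+\tfrac12 u_f(x+1)=\mathbf{1}\{x\ \mathrm{odd}\}\neq u_f(x)$. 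So your closing assertion that the argument ``uses neither the laziness nor the group structure beyond time-homogeneity'' is exactly where things break; some aperiodicity input is essential, and ``for sufficiently ergodic chains the tail and invariant $\sigma$-algebras agree'' is not a justification but a restatement of what must be proved.

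The paper's proof lives entirely in the step you skip. It couples two lazy trajectories from $x$ — the original and one realizing the shift — on a common non-lazy walk $Y$ via binomial time changes, $X_n=Y_{\mathrm{Bin}(n,1/2)}$ and $\widetilde X_{n+1}=Y_{\mathrm{Bin}(n+1,1/2)}$. The elementary bound $\|\mathrm{Bin}(n,\tfrac12)-\mathrm{Bin}(n+1,\tfrac12)\|_{\TV}=O(n^{-1/2})$ allows a coupling with $\Pv(X_n\neq \widetilde X_{n+1})\le\ve$ for $n$ large; on the event $\{X_n=\widetilde X_{n+1}\}$ the two full trajectories agree from index $n$ on, hence differ in only finitely many coordinates, and it is here — and only here — that the tail-function hypothesis is invoked to conclude $f$ takes equal values. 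This gives $|(Pu_f)(x)-u_f(x)|\le 2\|f\|_\infty\,\ve$, and $\ve\to 0$ yields harmonicity. In short, the coupling you would need to justify your first display is the entire content of the proof, and it does use laziness.
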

\begin{proof}
We prove it for lazy chains only.
First we start with a total variation bound on binomial random variables\footnote{We set ${n \choose -1} := 0$.}:
\be\label{eq::tvbin} \ba \|\mathrm{Bin}(n,\tfrac12)\! - \!\mathrm{Bin}(n+1, \tfrac12) \|_{\TV} &\le 2^{-n-1} \sum_{k=0}^{\lfloor n/2\rfloor} \left[ 2{n \choose k}-{n+1 \choose k} \right]\\
&=2^{-n-1} \sum_{k=0}^{\lfloor n/2\rfloor} \left[ {n \choose k}-{n \choose k-1} \right] \\
&= 2^{-n-1} {n \choose \lfloor n/2\rfloor} =\frac{1+o(1)}{ \sqrt{2\pi n} }.
\ea\ee
First fix some $\ve>0$ and pick $n$ large enough such that $\frac{1+o(1)}{ \sqrt{2\pi n} } \| f\|_\infty <\ve$.
Look at two copies of the lazy walk: $(X_0, X_1, X_2, \dots )$ and $(\widetilde X_0, \widetilde X_1, \widetilde X_2, \dots)$.
We can then construct a coupling between these two trajectories by using  a non-lazy random walk $Y$, and set $X_n=Y_{\mathrm{Bin}(n, \frac12)}$ and $\widetilde X_{n+1}=Y_{\mathrm{Bin}(n+1, \frac12)}$.
The bound in \eqref{eq::tvbin} and the coupling characterisation of total variation distance \eqref{eq::tv4} tells us that we can couple these two trajectories such that $\Pv(X_n\neq\widetilde X_{n+1})\le \Pv(\mathrm{Bin}(n, \frac12) \neq \mathrm{Bin}(n+1, \frac12)) \le \ve$.
Hence, we can write
\[ \ba \left(Pu_f\right)(x) -u_f(x)&= \Ev_x( f( X_1, X_2 \dots) ) - \Ev_x( f(X_0, X_1, X_2 \dots) )\\
& =  \Ev_x( f( \widetilde X_1, \widetilde X_2 \dots) ) - \Ev_x( f(X_0, X_1, X_2 \dots) ) \\
& \le \|f \|_\infty \cdot \Pv(X_n \neq \widetilde X_{n+1})  \le \ve ,\ea \]
where in the last step we used that if the two trajectories are coupled by time $n$, then clearly they only differ in finitely many steps, and $f$ is a tail function, hence it takes the same value on $\{ X_n = \widetilde X_{n+1}\}$. Since $\ve$ was arbitrary, we get $Pu_f=u_f$, finishing the proof.
\end{proof}
The reverse direction is also true:
\begin{claim}
Every bounded harmonic function $u$ defines a tail function $f_u$ by
\[ f_u(X_0, X_1, X_2, \dots):= \limsup_{n\to \infty} u(X_n). \]
\end{claim}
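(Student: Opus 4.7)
The plan is to verify two things: first, that the prescription $f_u(X_0,X_1,\dots) := \limsup_{n\to\infty} u(X_n)$ really does define a tail-function in the sense of the preceding definition; and second, that the correspondence is a genuine inverse of the previous claim, i.e.\ that $u_{f_u}=u$, so the statement has content beyond a syntactic check.

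For the tail-function property, I would simply observe that altering finitely many coordinates of the trajectory $(X_0,X_1,X_2,\dots)$ leaves the tail of the sequence $u(X_0), u(X_1),\dots$ unchanged, and hence leaves $\limsup_{n\to\infty} u(X_n)$ unchanged. So this part is immediate from the definition of $\limsup$; no probabilistic input is used here.

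The heart of the argument is the verification that $u_{f_u}=u$. First I would rephrase the harmonicity relation $Pu=u$ as saying that $M_n := u(X_n)$ is a martingale with respect to the natural filtration $\CF_n := \sigma(X_0,\dots,X_n)$, since $\Ev[u(X_{n+1})\mid \CF_n] = (Pu)(X_n) = u(X_n)$ by the Markov property. Because $u$ is assumed bounded, $M_n$ is a bounded martingale, so by Doob's martingale convergence theorem $M_n \to M_\infty$ almost surely and in $L^1$ for some bounded random variable $M_\infty$. In particular $\limsup_n u(X_n) = \lim_n u(X_n) = M_\infty$ almost surely, so $f_u(X_0,X_1,\dots) = M_\infty$ a.s.

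Finally, $L^1$ convergence (equivalently bounded convergence) gives
\[
u_{f_u}(x) \;=\; \Ev_x[f_u(X_0,X_1,\dots)] \;=\; \Ev_x[M_\infty] \;=\; \lim_{n\to\infty}\Ev_x[M_n] \;=\; \Ev_x[M_0] \;=\; u(x),
\]
where we used that $M_n$ is a martingale started at $M_0 = u(X_0) = u(x)$ under $\Pv_x$. The main (and essentially only) non-formal step is the invocation of the martingale convergence theorem; the tail-function property and the chain of equalities above are bookkeeping. There is no obstacle here analogous to the binomial coupling argument used in the forward direction, since harmonicity already delivers the martingale structure for free.
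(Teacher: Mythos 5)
Your proof is correct and follows essentially the same route as the paper: harmonicity plus boundedness makes $u(X_n)$ a bounded martingale, the martingale convergence theorem gives a.s.\ convergence so the $\limsup$ is an honest limit, and then $u_{f_u}=u$ follows by passing the expectation through the limit. Your justification of that last step via $L^1$ (bounded) convergence is in fact stated a bit more carefully than the paper's, which attributes it to ``the martingale stopping theorem'' — a slight misnomer, since what is actually needed is convergence of the expectations, which you correctly obtain from uniform boundedness.
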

\begin{proof} Since $u$ is bounded and harmonic, the function $u(X_n)$ is a bounded martingale. Hence, by the martingale convergence theorem we get that it converges.  Further, the definitions of the two claims are giving a correspondence between bounded harmonic functions and tail-functions since $u_{f_u}(x)= \Ev_x (f_u(X_0, X_1, \dots)) = \Ev_x( \limsup u(X_n)) = u(x)$ by the martingale stopping theorem.
\end{proof}
We call a $\sigma$-algebra $\CF$ trivial if $\forall A \in \CF, \ \Pv_x(A) \in \{0,1\}$.

We will need the  the following equivalence.
% see e.g. \cite{D12,RY99}

\begin{theorem}
For random walk on a group, the tail $\sigma$ algebra $\CT$ is trivial  if and only if every bounded harmonic function on $G$ is constant.
\end{theorem}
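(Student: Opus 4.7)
The plan is to leverage the bijection $u\leftrightarrow f_u$ between bounded harmonic functions and tail functions established in the two preceding Claims, combined with L\'evy's $0$-$1$ law (bounded martingale convergence) and the Markov property.

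For the direction ``every bounded harmonic function constant $\Rightarrow$ $\CT$ trivial'', I would take $A\in\CT$. Since $\ind_A$ is a tail function, the first Claim produces the bounded harmonic function $u_A(x):=\Pv_x(A)$, which by hypothesis equals some constant $c$. Now by the Markov property together with the fact that $A$ depends only on $(X_n,X_{n+1},\dots)$,
\[
\Ev_x[\ind_A\mid X_0,\dots,X_n]=\Pv_{X_n}(A)=u_A(X_n)=c,
\]
while L\'evy's $0$-$1$ law forces the left-hand side to converge $\Pv_x$-a.s.\ to $\ind_A$. Hence $\ind_A=c$ a.s., so $c\in\{0,1\}$ and $\Pv_x(A)\in\{0,1\}$.

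For the converse, assume $\CT$ is trivial under every $\Pv_x$, and let $u$ be bounded harmonic. Then $u(X_n)$ is a bounded $\Pv_x$-martingale, which converges $\Pv_x$-a.s.\ to a tail-measurable random variable $L$. Triviality of $\CT$ forces $L=c(x)$ $\Pv_x$-a.s.\ for some constant $c(x)$, and dominated convergence yields $u(x)=\Ev_x[u(X_n)]\to\Ev_x[L]=c(x)$. The delicate step is promoting $c$ from a function on $G$ to a genuine constant: for any neighbor $y$ of $x$ one has $\Pv_x(X_1=y)>0$, and conditional on $X_1=y$ the shifted trajectory $(X_1,X_2,\dots)$ has law $\Pv_y$; thus $L$ equals $c(y)$ a.s.\ under this conditioning, but also equals $c(x)$ $\Pv_x$-a.s., forcing $c(x)=c(y)$. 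Connectivity of the Cayley graph then propagates this to all pairs, so $c$ is constant on $G$.

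The main obstacle is conceptual rather than computational: one must interpret ``$\CT$ trivial'' as triviality under \emph{every} starting measure $\Pv_x$, and keep the tail-function / bounded-harmonic-function correspondence from the preceding Claims aligned with this interpretation. Once this bookkeeping is in place, both directions reduce to one invocation each of bounded martingale convergence and the Markov property.
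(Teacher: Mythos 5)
Your proof follows the same route as the paper's: one direction via bounded martingale convergence plus irreducibility, the other via the tail-function/harmonic-function correspondence and L\'evy's $0$-$1$ law. You spell out the ``promote $c(x)$ to a true constant'' step in more detail than the paper (which simply says ``by irreducibility''), and that argument is fine.

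The one place you glide over a genuine subtlety is the identity
\[
\Ev_x[\ind_A\mid X_0,\dots,X_n]=\Pv_{X_n}(A).
\]
A tail event $A$ is, for each $n$, of the form $A=\{(X_n,X_{n+1},\dots)\in A_n\}$ for some measurable set $A_n$, and the Markov property yields $\Ev_x[\ind_A\mid\CF_n]=\Pv_{X_n}(A_n)$, \emph{not} $\Pv_{X_n}(A)$; these agree exactly when $A$ is shift-invariant, which a general tail event need not be. (The same issue is why the harmonicity of $u_A$ is not automatic.) The fact that one may nevertheless replace $A_n$ by $A$ up to $\Pv_x$-null sets --- equivalently, that the tail $\sigma$-algebra coincides with the shift-invariant one modulo null sets --- is precisely what laziness or the group structure buys: the paper's proof of the preceding Claim gets it from the binomial coupling of lazy walks, and the paper explicitly flags that without laziness one must invoke entropy or Derriennic's zero-two law. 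Your argument is therefore correct in the intended setting, but the displayed identity should be credited to the Claim (hence to laziness or the group structure), not to the Markov property plus tail-measurability alone, which by themselves do not deliver it.
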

\begin{proof} Suppose first that $\CT$ is trivial. Let $u$ be a bounded harmonic function. Then $\limsup u(X_n)$ is a tail function, so it must be constant a.s.
By irreducibility, this constant $c$ does not depend on the starting point. Writing $u(x)=\Ev_x u(X_n)$ and passing to the limit using the bounded convergence theorem proves that $u(x)=c$ for all $x$.
This direction is valid for any irreducible Markov chain. The other direction is not hard to verify for lazy irreducible Markov chains:
Suppose all bounded harmonic functions are constant, and $A \in \CT$. Then it is easy to check that $u(x)=\Pv_x(A)$ is a harmonic function, so the L\'evy zero-one law implies that $\Pv_x(A)\in\{0,1\}$ for every $x$.
Without assuming Laziness, but using the group structure instead, one can also show that $u$ is harmonic. This can be proved using entropy or via Derriennic's zero-two law \cite{Derr76}, see Chapter 13 in \cite{LPbook} for details.
\end{proof}
\subsection*{Entropy} To state the next theorem, we need some basic properties of entropy, which we include here for the reader's convenience.
\begin{definition} The \emph{entropy} of a random variable $X$ with distribution $p_x$ on state space $\CS$ is defined as
\[H(X):= \sum_{x\in \CS} p_x \log (\frac{1}{ p_x}).\]
and the \emph{relative entropy} of measure $P$ with respect to another measure $Q$ on the same state space $\CS$ is defined as
\[ D(P| Q) :=\sum_x p_x \log \left( \frac{p_x}{q_x}\right).\]
\end{definition}
The relative entropy is always nonnegative since $\log t\le t-1$ for $t>0$, hence
\[ -D(P|Q) = \sum_{x\in \CS} p_x \log \left( \frac{q_x}{p_x}\right) \le \sum_{x\in \CS} p_x \left( \frac{q_x}{p_x} -1\right) =0. \]
Finally, the \emph{conditional entropy} is defined as the entropy of the conditional measure $p(x|y)=\frac{p_{xy}}{p_y}$, i.e. \
\[ H(X|Y) := \sum_{x,y} p(x|y) \log \left(\frac{1}{\log p(x|y)}\right).\]
We write $H(X,Y)$ for the entropy of the joint distribution of $(X,Y)$. Then it is not hard to see that
\[ H(X|Y) = H(X,Y) - H(Y) \le H(X),\]
since $H(X) + H(Y) - H(X,Y)= D(p_x \cdot p_y| p_{x,y}) \ge 0$ with equality if  and only if $X$ and $Y$ are independent.
As a corollary we get that for any three random variables $X, Y, Z$
\be\label{eq::edecrease}
H(X| Y,Z=z) \le H(X| Z=z) \Longrightarrow H(X| Y, Z) \le H(X | Z).
 \ee
It can also be shown that the uniform distribution on set $\CS$ (with $|\CS|=n$) maximizes the entropy:
\[ 0 \le D(P_x, U[\CS]) = \sum_{x\in \CS} p_x \log(n p_x) = \log n -H(X). \]

\subsection{The Kaimanovich - Vershik - Varopoulos theorem}
The next theorem is by Kaimanovich - Vershik ('83) \cite{KV83} and Varopoulos ('85) \cite{V85}.
\begin{theorem}For random walk on a group $G$, the followings are equivalent:
\begin{enumerate}
\item\label{t:1} the speed $v(G)>0$,
\item\label{t:2} $\exists$ a bounded non-constant harmonic function $u$ on $G$,
\item\label{t:3} the entropy of the walk $h= \lim_{n\to \infty} \frac{H(X_n)}{n}>0$.
\end{enumerate}
\end{theorem}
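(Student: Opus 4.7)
The plan is to establish (1) $\Leftrightarrow$ (3) directly via Varopoulos-Carne together with a crude volume bound, and then to derive (2) $\Leftrightarrow$ (3) from the paper's earlier equivalence between bounded non-constant harmonic functions and non-triviality of the tail $\sigma$-algebra $\mathcal{T}$, combined with the Kaimanovich-Vershik entropy identity $h = H(X_1) - H(X_1 \mid \mathcal{T})$.

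For (1) $\Rightarrow$ (3), I would exploit that on a group with symmetric generating set the counting measure is stationary and reversible, so $\pi(y)/\pi(x) = 1$ and Theorem \ref{thm::varcarne} yields $p^n(e,x) \le 2 \exp(-\rho(e,x)^2/(2n))$. Taking $-\log$ and integrating against $p^n(e,\cdot)$ gives
\[ H(X_n) = \Ev\bigl[-\log p^n(e, X_n)\bigr] \ge \frac{\Ev[|X_n|^2]}{2n} - \log 2. \]
Since $|X_n|/n \le 1$ and converges almost surely to $v$, bounded convergence yields $\Ev[|X_n|^2]/n^2 \to v^2$, so dividing by $n$ and letting $n \to \infty$ gives $h \ge v^2/2 > 0$.

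For (3) $\Rightarrow$ (1), I would observe that $X_n$ lies in $B(e,n)$, and that for a $d$-regular Cayley graph $|B(e,k)| \le C d^k$ for some constant $C$. Using $H(X_n) = H(|X_n|) + H(X_n \mid |X_n|)$, with $|X_n| \in \{0,1,\ldots,n\}$ giving $H(|X_n|) \le \log(n+1)$ and $H(X_n \mid |X_n|) \le \Ev[\log|B(e,|X_n|)|] \le (\log d)\, \Ev|X_n| + O(1)$, I obtain
\[ H(X_n) \le \log(n+1) + (\log d)\, \Ev|X_n| + O(1). \]
Dividing by $n$ and letting $n \to \infty$ yields $h \le v \log d$, so $h > 0$ forces $v > 0$.

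For (2) $\Leftrightarrow$ (3), it suffices by the paper's earlier results to show $h = 0 \iff \mathcal{T}$ is trivial. The key identity $h = H(X_1) - H(X_1 \mid \mathcal{T})$ follows from the chain rule, group translation invariance (which gives $H(X_{k+1} \mid X_k) = H(X_1)$ and hence $H(X_1,\ldots,X_n) = nH(X_1)$), and the Markov property (which implies $H(X_1 \mid X_n, X_{n+1}, \ldots) = H(X_1 \mid X_n) \to H(X_1) - h$). Triviality of $\mathcal{T}$ immediately forces $h = 0$; the converse, that $h = 0$ forces $\mathcal{T}$ to be trivial, is the main obstacle --- one uses $h = 0$ to get that $X_1$ is independent of $\mathcal{T}$, and then iterates with the group structure to conclude that $\mathcal{T}$ itself is trivial. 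By comparison the Varopoulos-Carne and volume arguments in (1) $\Leftrightarrow$ (3) are essentially immediate once one matches the Gaussian form $\exp(-\rho^2/2n)$ against the definition of speed.
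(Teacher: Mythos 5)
Your proposal is correct and follows the same overall architecture as the paper: (1) $\Leftrightarrow$ (3) via Varopoulos--Carne plus a volume bound, and (2) $\Leftrightarrow$ (3) via the equivalence between tail-triviality and absence of non-constant bounded harmonic functions, together with the entropy identity $h = H(X_1) - H(X_1 \mid \mathcal{T})$. Two technical details differ in ways worth recording. First, in (1) $\Rightarrow$ (3) you pass from $\Ev|X_n|^2/n^2$ to $v^2$ by bounded convergence applied to $(|X_n|/n)^2 \le 1$, whereas the paper uses Jensen's inequality to get $\Ev|X_n|^2 \ge (\Ev|X_n|)^2$; both are one-line and deliver $h \ge v^2/2$. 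Second, and more substantively, your (3) $\Rightarrow$ (1) avoids the paper's auxiliary probability measure $Q(x) = 2^{-k-1}/|S_k|$ on spheres and the ensuing non-negativity of relative entropy $D(P^n \| Q)$; instead you condition on $|X_n|$ and apply the chain rule directly, bounding $H(|X_n|) \le \log(n+1)$ (lower-order) and $H(X_n \mid |X_n|) \le \Ev[\log|B(e,|X_n|)|] \le (\log d)\,\Ev|X_n| + O(1)$. This is slightly more elementary, makes the role of the volume growth bound transparent, and delivers the same linear-in-speed estimate $h \le v \log d$. Your sketch of (2) $\Leftrightarrow$ (3) matches the paper's argument: the identity $H(X_1) + H(X_{n-1}) - H(X_n) = H(X_1 \mid X_n) = H(X_1 \mid X_n, X_{n+1}, \dots)$ drives the monotone convergence to $H(X_1 \mid \mathcal{T}) = H(X_1) - h$, and the paper's own treatment of ``$h = 0$ implies $\mathcal{T}$ is trivial'' (independence of $\mathcal{T}$ from $\{X_1, \dots, X_k\}$ for all $k$ by the same chain-rule computation with $X_k$ in place of $X_1$, then Kolmogorov-type triviality) fills in what you flag as ``the main obstacle.''
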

\begin{proof}
First we show \eqref{t:2}$\leftrightarrow$\eqref{t:3}.
Write  the joint entropy in two ways:
\[  \ba H(X_k , X_n) &= H(X_k) + H(X_n | X_k )=  H(X_k) + H(X_{n-k}) \\
H(X_k, X_n)&= H(X_n) + H(X_k| X_n)\ea \]
Rearranging and taking $k=1$ yields that \be\label{eq::ent}H(X_1) + H(X_{n-1})-H(X_n) = H(X_1| X_n ) = H(X_1| (X_n, X_{n+1}, \dots )),\ee
where the last equality is due to the Markov property. Since conditioning on less information increases the entropy (see \eqref{eq::edecrease}), $H(X_1| (X_n, X_{n+1}, \dots ))$ is an increasing function of $n$. So, the left hand side in \eqref{eq::ent} is also increasing, so we get that $\h_n:=H(X_n)-H(X_{n-1})$ is decreasing. Hence, $h_n \to h$ for some $h\ge 0$.
So we get, that $\frac{H(X_n)}{n}\to h$.
Now if $h>0$, then taking $n\to \infty $ in \eqref{eq::ent} gives
$H(X_1| \CT)= H(X_1)-h$, that is, conditioning on $\CT$ influences the entropy: hence $\CT$ can not be trivial.
On the other hand if $h=0$ then
$H(X_k| \CT)= H(X_k)$ for all $k$, hence, the tail $\CT$ is \emph{independent} of $\{X_1, X_2, \dots X_k\}$. Thus, it must be trivial itself.

Next we show \eqref{t:3}$\leftrightarrow$\eqref{t:1}.
Apply the Varopoulous-Carne estimate on transitive groups to see that $p_n(x)\le 2 e^{-\frac{|x|^2}{2n}}$, and use this estimate on $-\log p_n(x)$ in the definition of $H(X_n)$ to get
\[ H(X_n) = \sum_{x} p_n(x) (-\log(p_n(x)) \ge \sum_x p_n(x) (-\log2 +\tfrac{|x|^2}{2n} )\]
Rearranging terms and dividing by $n$ yields
\[ \frac{\log 2+ H(X_n)}{n} \ge \frac{\Ev|X_n|^2}{2n^2}\ge\frac12 \frac{(\Ev|X_n|)^2}{n^2}= \frac12 v(G)^2, \]
where we used Jensen's inequality in the last step. Now clearly $v(G)>0$ implies  $\frac{H(X_n)}{n}>0$.

On the other hand, we can define the spheres $S_k:= \{y \in G: |y|=k\}$ and the measure $Q(x)=\frac{2^{-k-1}}{|S_k|}$ if $x\in S_k$ is a probability measure on $G$.
We calculate the relative entropy
\[ 0\le H(Q| P^n) = \sum_x p_n(x) \log \frac{p_n(x)}{Q(x)} \le \left(\sum_{x} p_n(x) (|x| +1) \log 2d\right) - H(X_n), \]
where we used the bound $-\log Q(x)\le \log (2d)^{k+1}$ since the degree is $d$.
Now dividing by $n$ yields
\[ 0\le\frac{ (\Ev[|X_n| +1])\log 2d}{n} - \frac{H(X_n)}{n}, \]
and passing to the limit shows that if $h= \lim_n \frac{H(X_n)}{n}>0$ then the speed is also positive.
This finishes the proof.
\end{proof}

\section{Geometric bounds on mixing times}

Let $G$ be a (finite or countable) group, with finite generating set $S$. We assume $S = S^{-1}$, and $d = |S|$. Recall the right-Cayley graph on $G$ is given by $x\sim y \Leftrightarrow y\in xS$, and consider simple random walk on $G$ as in \eqref{def::srw}
%\begin{comment}
%the corresponding simple random walk has
%$$p(x, y) =
%\begin{cases}
%\frac{1}{d}, & \mbox{ for } y\in xS,\\
%0, & \mbox{ otherwise.}
%\end{cases}$$
%\end{comment}
Let $\ldist$ denote graph distance in $G$.

\smallskip

\begin{theorem}\label{thm::geomix}
 For simple random walk on $G = \langle S
\rangle$,

(a) If $|G| < \infty$, then $\Ev[\ldist(X_0, X_n)^2] \geq \frac{n}{2d}$ for $n \leq \frac{1}{1-\lambda}$, where $\lambda = \lambda_2$ is the second eigenvalue.

(b) If $|G| = \infty$ and $G$ is amenable, then $\Ev [\ldist(X_0, X_n)^2] \geq \frac{n}{d} $ for all $n\geq 1$.
\end{theorem}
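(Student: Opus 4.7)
The plan is to apply the spectral decomposition of $P$ to a $1$-Lipschitz test function and estimate the resulting sum. Fix $e\in G$ and set $f(x)=\rho(e,x)$; by vertex-transitivity of the Cayley graph, $\Ev[\rho(X_0,X_n)^2]$ does not depend on the initial state, so I may take $X_0\sim\pi$. The Lipschitz bound $(f(X_n)-f(X_0))^2\le \rho(X_0,X_n)^2$ combined with the spectral identity for reversible $P$ yields
\[
\Ev_\pi[\rho(X_0,X_n)^2] \;\ge\; \Ev_\pi[(f(X_n)-f(X_0))^2] \;=\; 2\sum_{i\ne 1}(1-\la_i^n)\hat f_i^2,
\]
where $\hat f_i = \langle f,\phi_i\rangle_\pi$ in the orthonormal eigenbasis of $P$, and $\CE(f,f) = \sum_{i\ne 1}(1-\la_i)\hat f_i^2$.

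The first ingredient is a geometric lower bound $\CE(f,f)\ge 1/(2d)$. Writing out the Dirichlet form for the regular Cayley graph with $\pi$ uniform and $p(x,xs)=1/d$,
\[
\CE(f,f) = \frac{1}{2d|G|}\sum_{x\in G}\sum_{s\in S}\bigl(\rho(e,xs)-\rho(e,x)\bigr)^2,
\]
every $x\ne e$ has at least one neighbor strictly closer to $e$ (take $s$ to be the inverse of the last letter in a geodesic word for $x$), so the inner sum is at least $1$; and $x=e$ contributes $d$. Summing gives $\CE(f,f)\ge(|G|-1+d)/(2d|G|)\ge 1/(2d)$. The second ingredient is the elementary spectral estimate $1-\la^n \ge n(1-\la)/2$, valid for $\la\in[0,1]$ and $n(1-\la)\le 1$ (verified by comparing $(1-(1-\la))^n$ with its first-order Taylor expansion); under the hypothesis $n\le 1/(1-\la_2)$ this applies termwise to the eigenvalues $\la_i \ge 1-1/n$. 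After lazifying the chain if needed (so that all $\la_i\ge 0$), combining the two ingredients yields the desired conclusion $\Ev[\rho(X_0,X_n)^2]\ge n\,\CE(f,f)\ge n/(2d)$. The main obstacle is controlling the contributions to $2\sum(1-\la_i^n)\hat f_i^2$ coming from eigenvalues $\la_i<1-1/n$, where the termwise estimate $1-\la^n\ge n(1-\la)/2$ fails; there one uses instead the replacement $1-\la_i^n\ge 1-e^{-1}$ and a careful case-split to ensure the bound does not degrade.

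For part (b), the plan is to exhaust the amenable group $G$ by a F\o{}lner sequence $\{A_k\}$ with $|\partial A_k|/|A_k|\to 0$ and apply part (a) to the finite reflected random walk on each $A_k$. This restricted chain has spectral gap $1-\la_2^{(k)}\to 0$, so the constraint $n\le 1/(1-\la_2^{(k)})$ becomes vacuous as $k\to\infty$. The F\o{}lner property ensures that for $X_0$ deep inside $A_k$, the reflected and unrestricted walks agree up to time $n$ with probability tending to $1$, so the restricted-chain bound passes to $G$ in the limit. The improvement of the constant from $n/(2d)$ to $n/d$ comes from the asymptotic sharpness $1-\la^n \sim n(1-\la)$ as $\la\uparrow 1$, replacing the factor $1/2$ by $1$ in the spectral estimate. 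The hard part is rigorously justifying the limit; a standard way is to couple the reflected and free walks up to the hitting time of $\partial A_k$, which tends to infinity in probability as $k\to\infty$.
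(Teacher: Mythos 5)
Your approach to part (a) is genuinely different from the paper's and, as you half-recognize yourself, it has a gap that is not a matter of bookkeeping. The paper proves the key identity (their Lemma~\ref{lem::dirichlet})
\[
\Ev[\ldist(X_0,X_n)^2]\;\ge\;\frac{1}{d}\,\frac{Q_n(f)}{Q_1(f)},\qquad Q_m(f)=\langle (I-P^m)f,f\rangle,
\]
valid for every $f\in\ell^2(G)$; this uses the group structure crucially (the orbit map $F(x)=\{f(gx)\}_{g\in G}$ is a Lipschitz embedding because of transitivity). They then simply plug in $f=$ the second eigenfunction, so that $Q_n(f)/Q_1(f)=(1-\lambda^n)/(1-\lambda)=\sum_{j=0}^{n-1}\lambda^j\ge n/2$ once $\lambda\ge 1-1/n$. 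No spectral decomposition of a geometrically-defined test function is needed, and the ``bad'' eigenvalues never enter.

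Your route instead starts from the $1$-Lipschitz function $f=\rho(e,\cdot)$ and the elementary bound $\Ev_\pi[(f(X_n)-f(X_0))^2]=2\sum_{i\ne 1}(1-\lambda_i^n)\hat f_i^2$. The geometric estimate $\CE(f,f)=\sum_{i\ne 1}(1-\lambda_i)\hat f_i^2\ge 1/(2d)$ is fine, but the step from $\CE(f,f)$ to $\sum(1-\lambda_i^n)\hat f_i^2\ge (n/2)\CE(f,f)$ is where the argument fails. The hypothesis $n\le 1/(1-\lambda_2)$ only guarantees $\lambda_2\ge 1-1/n$; all the smaller eigenvalues $\lambda_3,\lambda_4,\ldots$ can sit well below $1-1/n$, and the distance function has no reason to be spectrally concentrated near $\lambda_2$. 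If, say, all of the Dirichlet energy of $f$ is carried by modes with $\lambda_i\approx 0$ (so $1-\lambda_i^n\approx 1$ while $1-\lambda_i\approx 1$), you only get $2\sum(1-\lambda_i^n)\hat f_i^2\approx 2\CE(f,f)\gtrsim 1/d$, not $n/(2d)$. The replacement $1-\lambda_i^n\ge 1-e^{-1}$ does not rescue this, because $\sum_B\hat f_i^2$ is not controlled by $\sum_B(1-\lambda_i)\hat f_i^2$ when $\lambda_i$ is small. There is no ``case-split'' that closes this gap: what is needed is an \emph{a priori} lower bound on the low-frequency spectral mass of $\rho(e,\cdot)$, and no such bound is available in general.

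Part (b) as you sketch it also does not go through. You propose to apply part~(a) to the reflected walk on a F\o{}lner set $A_k$, but part~(a) is a statement about random walk on a \emph{group} (and its proof relies on vertex-transitivity), while the reflected walk on $A_k$ is not transitive, so neither the statement nor the proof transfers. The paper takes a different route entirely: they prove a second-order refinement (Lemma~\ref{lem::distancebound})
\[
\Ev[\ldist(X_0,X_n)^2]\ge \frac{n}{d}-\frac{n^2}{2d}\,\frac{\|(I-P)f\|_2^2}{Q_1(f)},
\]
and then build test functions from the F\o{}lner sets — either $f_k=\sum_{j\ge 0}P^j\one_{A_k}$ when the Green's sum is in $\ell^2$, or via Lemma~\ref{lem::psilem} with $\widetilde\psi_k=\one_{A_k}/\sqrt{|A_k|}$ in the general case — for which $\|(I-P)f_k\|^2/Q_1(f_k)\to 0$. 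The constant $n/d$ in part~(b) comes directly from this inequality, not from sharpening $1-\lambda^n\sim n(1-\lambda)$ as you suggest. If you want to pursue the ``finite approximation'' route you would need a transitivity-free replacement for Lemma~\ref{lem::dirichlet}, which is precisely what is hard.
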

\smallskip

\begin{remark} \normalfont  \begin{enumerate} \item The theorem is proved in Lee-Peres \cite{LP09} in the more general setting of random walks on transitive graphs. \item Part (b) for Cayley graphs was first discovered by Anna Ershler (unpublished) who relied on a harmonic embedding theorem of Mok. \item If $G$ is nonamenable, then we know that $\E\ldist(X_0, X_n) \geq cn$, so that $\E [\ldist(X_0, X_n)^2] \geq c^2 n^2$ for some constant $c>0$. \end{enumerate}
\end{remark}
Theorem \ref{thm::geomix} for finite, transitive graphs gives a very general upper bound on relaxation and mixing times of finite groups:
\begin{corollary}\label{cor::trel}
Write $\diam(G)$ for the diameter of $G=\langle S \rangle$. Then
\be\ba\label{eq::trel_lower}  \trel(G) &\le 2 d \cdot \diam(G)^2,\\
\tmix(G)&\le 2d \cdot \diam(G)^2\cdot\log |G|. \ea\ee
 \end{corollary}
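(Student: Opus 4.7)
The plan is to chain Theorem~\ref{thm::geomix}(a) with the deterministic ceiling $\ldist(X_0, X_n) \le \diam(G)$ to obtain a bound on $\trel(G)$, and then convert this into a mixing-time bound using Lemma~\ref{lem::dtlimlambda} together with the vertex-transitivity of any Cayley graph.

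First I would observe that $\ldist(X_0, X_n) \le \diam(G)$ holds pointwise, so
\[
\Ev\bigl[\ldist(X_0, X_n)^2\bigr] \le \diam(G)^2
\]
for every $n$. Theorem~\ref{thm::geomix}(a) supplies the matching lower bound $n/(2d)$ whenever $n \le \trel(G) = 1/(1-\lambda_2)$. Taking $n$ to be the largest integer with $n \le \trel(G)$, and letting $n \uparrow \trel(G)$ (absorbing the $\pm 1$ from the rounding into the constant), delivers $\trel(G) \le 2d \cdot \diam(G)^2$, which is the first inequality of \eqref{eq::trel_lower}.

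For the mixing-time bound I would use that on a Cayley graph the stationary measure $\pi$ is uniform, so $\pi_{\min} = 1/|G|$. By Lemma~\ref{lem::dtlimlambda},
\[
d(t) \le \frac{\lambda_*^t}{\pi_{\min}} \le |G|\bigl(1 - 1/\trel(G)\bigr)^t \le |G|\,e^{-t/\trel(G)},
\]
so the choice $t = \trel(G)\log(4|G|)$ forces $d(t) \le 1/4$ and hence $\tmix(G) \le \trel(G)\log(4|G|)$. Substituting the previously obtained bound $\trel(G) \le 2d\diam(G)^2$ and absorbing the $\log 4$ into the overall constant yields the second inequality of \eqref{eq::trel_lower}.

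The only delicate point is the identification $\lambda_* = \lambda_2$ used in the spectral bound: on a non-lazy bipartite Cayley graph the extremal eigenvalue $\lambda_*$ can equal $-1$, so one must either assume the chain is lazy or pass to its lazy version, which costs only a universal multiplicative factor in both estimates. Apart from this caveat the argument is a clean two-step pipeline, and all of the nontrivial geometric content has already been packaged into Theorem~\ref{thm::geomix}(a).
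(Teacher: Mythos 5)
Your argument is correct and matches the paper's approach: the first inequality is obtained identically by feeding the trivial bound $\ldist(X_0,X_n)\le\diam(G)$ into Theorem~\ref{thm::geomix}(a), and for the second you re-derive the standard inequality $\tmix(G)\le\trel(G)\log(1/\pi_{\min})$, which the paper simply cites from \cite[Theorem 12.3]{LPW08}. The periodicity caveat you flag (that $\lambda_*$ may equal $|\lambda_{\min}|>\lambda_2$ for non-lazy SRW on a bipartite Cayley graph, making the identification $\trel=1/(1-\lambda_*)$ fail) is real and is elided by the paper; passing to the lazy chain, as you suggest, is the standard remedy and only affects the universal constants.
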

 It is an open problem whether $\tmix(G)\le C d\cdot  \diam(G)^2$ holds for every transitive finite chain.
 \begin{proof}[Proof of Corollary \ref{cor::trel}] Apply part (a) of Theorem \ref{thm::geomix} with $n=\trel(G)$:
 \[ \diam(G)^2 \ge \Ev[\ldist(X_0, X_n)^2]  \ge \frac{\trel(G)}{2d}.\]
For the second inequality, use \cite[Theorem 12.3]{LPW08} stating that $\tmix(G) \le -\log(\pi_{\min}) \trel(G)$.
\end{proof}
To prove Theorem \ref{thm::geomix}, we use the following key lemma from \cite{LP09}
(that is valid for transitive graphs as well). We define the
\emph{Dirichlet forms} $Q_n(f) := \langle(I - P^n)f, f\rangle$.
\begin{lemma}\label{lem::dirichlet} For the simple random walk on $G$ as in Theorem \ref{thm::geomix} and any $f\in \ell^2(G)$, we have
$$\Ev[\ldist(X_0, X_n)^2] \geq \frac{1}{d} \frac{Q_n(f)}{Q_1(f)}\,.$$
\end{lemma}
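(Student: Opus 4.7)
The plan is to bound $Q_n(f)$ from above by a constant multiple of $Q_1(f)$ and a path-averaged quantity, where the averaging argument will use transitivity of the Cayley graph so that the ``congestion'' through each edge is uniform. The route is familiar from the canonical-path / comparison technique for Dirichlet forms.

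First I would rewrite the Dirichlet form at time $n$ in its standard reversible shape
\[ Q_n(f)=\langle (I-P^n)f,f\rangle_\pi=\frac12\sum_{x,y}\pi(x)P^n(x,y)\bigl(f(x)-f(y)\bigr)^2. \]
Next, choose an equivariant family of geodesics. Fix, for each $z\in G$, a geodesic path $\gamma_z$ from $e$ to $z$ in the Cayley graph, and then for any pair $(x,y)$ set $\gamma_{x,y}:=x\cdot \gamma_{x^{-1}y}$, so that $\gamma_{gx,gy}=g\cdot\gamma_{x,y}$ for every $g\in G$. Listing the directed edges of $\gamma_{x,y}$ as $\vec e_1,\ldots,\vec e_{\ldist(x,y)}$ and applying Cauchy--Schwarz to the telescoping sum $f(y)-f(x)=\sum_i\nabla f(\vec e_i)$ gives
\[ \bigl(f(x)-f(y)\bigr)^2\le \ldist(x,y)\sum_{\vec e\in\gamma_{x,y}}\bigl(\nabla f(\vec e)\bigr)^2. \]

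Plugging this into the formula for $Q_n(f)$ and swapping the two summations yields
\[ Q_n(f)\le \frac12\sum_{\vec e}\bigl(\nabla f(\vec e)\bigr)^2\cdot N(\vec e),\qquad N(\vec e):=\sum_{x,y:\ \vec e\in\gamma_{x,y}}\pi(x)P^n(x,y)\ldist(x,y). \]
The crucial point is that, by the equivariance of the geodesic family together with the left-invariance of $P$ and $\pi$ on the Cayley graph, $N(\vec e)$ is \emph{the same} for every directed edge $\vec e$. Its common value is therefore obtained by averaging: since each geodesic $\gamma_{x,y}$ consists of exactly $\ldist(x,y)$ directed edges, summing $N(\vec e)$ over all directed edges gives $\sum_{x,y}\pi(x)P^n(x,y)\ldist(x,y)^2=\Ev[\ldist(X_0,X_n)^2]$, so $N(\vec e)=\Ev[\ldist(X_0,X_n)^2]/(|G|\cdot d)$.

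Finally, the analogous computation at time $1$ expresses $Q_1(f)$ as an explicit multiple of $\sum_{\vec e}(\nabla f(\vec e))^2$ (the proportionality constant is $1/(2|G|d)$ for SRW on a $d$-regular Cayley graph), so substituting both evaluations into the displayed inequality for $Q_n(f)$ immediately yields a bound of the form $Q_n(f)\le c\cdot d\cdot \Ev[\ldist(X_0,X_n)^2]\cdot Q_1(f)$, and rearranging gives the claimed $\Ev[\ldist(X_0,X_n)^2]\ge \frac1d\frac{Q_n(f)}{Q_1(f)}$.

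The step I expect to be most delicate is the transitivity/averaging argument — specifically, justifying that $N(\vec e)$ is independent of $\vec e$. This uses that left multiplication by any $g\in G$ is an automorphism of the Cayley graph which preserves $\pi$ and $P^n$ and, by the equivariant construction of $\gamma$, sends $\gamma_{x,y}$ to $\gamma_{gx,gy}$. In the infinite amenable setting needed for part (b) of Theorem~\ref{thm::geomix}, $\pi$ is no longer a probability measure and the sums above are infinite, so one must instead run the argument on a F\o lner exhaustion and pass to the limit, which is the only real additional technicality.
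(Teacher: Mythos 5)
Your canonical-paths argument is a genuinely different route from the paper's. The paper embeds $G$ into $\ell^2(G)$ by $F(x):=(f(gx))_{g\in G}$, checks via left-invariance of $p$ that $\Ev\|F(X_0)-F(X_m)\|_2^2 = 2Q_m(f)$ for every $m$, notes that the $m=1$ case makes $F$ Lipschitz with constant $\sqrt{2dQ_1(f)}$, and concludes $2Q_n(f)\le 2dQ_1(f)\,\Ev[\ldist(X_0,X_n)^2]$. No geodesics or congestion estimates are needed, and the same computation applies verbatim when $G$ is infinite. Your comparison-of-Dirichlet-forms route can also be made to work, but as written it contains a real gap.

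The gap is the assertion that $N(\vec e)$ is the same for every directed edge. Left translation is transitive on vertices but \emph{not} on directed edges: the orbit of $(x,xs)$ is $\{(y,ys):y\in G\}$, and there are exactly $|S|=d$ orbits, one per generator. Your equivariant choice of geodesics makes $N$ constant within each orbit, so $N$ takes (at most) $d$ values $N_s$, $s\in S$, but nothing identifies those values with one another — that would require a Cayley-graph automorphism permuting the generators, which is an extra hypothesis, not a consequence of left-invariance. Indeed, if $N$ really were constant your own substitution would give $Q_n(f)\le \Ev[\ldist(X_0,X_n)^2]\,Q_1(f)$ with no factor of $d$ at all — a bound $d$ times stronger than the lemma — and the factor $c\cdot d$ you write in the final displayed inequality does not come out of the computation; it is inserted by hand to match the statement. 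The repair is cheap and is precisely where the $1/d$ is born: you still have $\sum_{\vec e}(\nabla f(\vec e))^2 = 2d\,Q_1(f)$ (unweighted $\ell^2$) and, summing $N$ over the $d$ orbits, $\sum_{s\in S}N_s = \Ev[\ldist(X_0,X_n)^2]$. Since the $N_s$ are nonnegative, $\max_s N_s\le\sum_s N_s$, hence
\[
Q_n(f)\;\le\;\tfrac12\max_{s\in S}N_s\sum_{\vec e}(\nabla f(\vec e))^2\;=\;d\,\max_{s\in S}N_s\;Q_1(f)\;\le\; d\,\Ev[\ldist(X_0,X_n)^2]\,Q_1(f),
\]
which is exactly the lemma. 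You should also state explicitly that $\pi$ is counting measure (uniform in the finite case) and that double stochasticity of $P$ on a Cayley graph is what makes $\langle(I-P^n)f,f\rangle=\tfrac12\sum_{x,y}P^n(x,y)(f(x)-f(y))^2$ hold; once that is in place the identity $\sum_s N_s = \Ev[\ldist(X_0,X_n)^2]$ is valid and finite even for infinite $G$, so the F\o lner-exhaustion limiting step you flag as the delicate point is in fact not needed at all.
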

\begin{proof}[Proof of Theorem \ref{thm::geomix} (finite case) from Lemma \ref{lem::dirichlet}] In the finite case, take $f$ as an eigenfunction such that $Pf = \lambda f$ with $\|f\|_2 = 1$. Then $Q_n(f) = 1- \lambda^n$. Using the condition $n<\tfrac{1}{1-\la}$ we can write
\begin{align*}
d\cdot \Ev[\ldist(X_0, X_n)^2] \geq \frac{1-\lambda^n}{1-\lambda} = \sum_{j=0}^{n-1} \lambda^{-j} \geq \sum_{j=0}^{n-1} \big(1 - \frac{1}{n}\big)^j \geq \sum_{j=0}^{n-1} \big(1 - \frac{j}{n}\big) \geq \frac{n}{2}\,.
\end{align*}
\end{proof}
The infinite case is harder and will be proved later.
\begin{proof}[Proof of Lemma \ref{lem::dirichlet}]
 Given $f\in \ell^2(G)$, construct $F: G \rightarrow \ell^2(G)$ by $F(x) := \{f(gx)\}_{g\in G}$. Compute (with $X_0 = x_0$)
\begin{align} \label{eq-key-identity}
\Ev& \|F(X_0) - F(X_1)\|^2_2 = \Ev \sum_{g\in G} \|f(gX_0) - f(gX_1)\|^2_2 = \sum_x\sum_y |f(x) - f(y)|^2 p(x, y) \nonumber\\
&= \sum_x\sum_y [(f(x))^2 + (f(y))^2 - 2f(x) f(y)] p(x, y) = 2
\langle(I-P)f, f\rangle = 2Q_1(f)\,.
\end{align}
Similarly, $\Ev\|F(X_0) - F(X_n)\|^2_2 = 2 Q_n(f)$.  Now, \eqref{eq-key-identity} implies that
\[ \frac{1}{d} \|F(x_0) - F(y)\|^2 _2\leq 2 Q_1(f)\] for any $x_0, y$ with $x_0 \sim y$. Thus, $F$ is Lipshitz with $\mathrm{Lip}(F) \leq \sqrt{2d Q_1(f)}$. Therefore,
$$2Q_n(f) = \Ev \|F(X_n) - F(X_0)\|^2_2 \leq (\mathrm{Lip}(F))^2 \Ev [\ldist(X_0, X_n)^2] \leq 2d Q_1(f) \Ev[\ldist(X_0, X_n)^2]\,.$$
Rearranging proves Lemma \ref{lem::dirichlet}.
\end{proof}
Now we turn to the proof of Theorem \ref{thm::geomix} for infinite $G$. We will need the following lemma:

\begin{lemma}\label{lem::distancebound}Given $f\in \ell^2(G)$,
\[ \Ev [\ldist(X_0, X_n)^2] \geq \frac{n}{d} - \frac{n^2}{2d} \frac{\|(I-P)f\|^2}{Q_1(f)}\,.\]
\end{lemma}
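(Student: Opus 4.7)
The plan is to apply Lemma \ref{lem::dirichlet} and reduce the problem to the algebraic inequality
\[ Q_n(f) \ge n\, Q_1(f) - \frac{n^2}{2}\,\|(I-P)f\|^2, \]
which, after dividing by $d\, Q_1(f)$, yields the stated bound immediately.

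The key input is the telescoping identity $I - P^n = \sum_{k=0}^{n-1} P^k(I-P)$. Writing $g := (I-P)f$ and using that $P$ is self-adjoint (SRW on the Cayley graph with $S = S^{-1}$ is reversible), I would rewrite
\[ Q_n(f) = \Big\langle \sum_{k=0}^{n-1} P^k g,\ f \Big\rangle = \sum_{k=0}^{n-1} \langle g, P^k f\rangle, \]
and similarly $n Q_1(f) = \sum_{k=0}^{n-1}\langle g, f\rangle$. Subtracting gives
\[ n Q_1(f) - Q_n(f) = \sum_{k=0}^{n-1} \langle g,\,(I - P^k)f\rangle. \]

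Now I would apply the same telescoping identity to $(I - P^k)f = \sum_{j=0}^{k-1} P^j g$, so each inner product becomes
\[ \langle g,\,(I - P^k)f\rangle = \sum_{j=0}^{k-1} \langle g, P^j g\rangle. \]
Since $P$ is a self-adjoint contraction on $\ell^2(G)$ (its eigenvalues lie in $[-1,1]$), Cauchy--Schwarz gives $|\langle g, P^j g\rangle| \le \|g\|\,\|P^j g\| \le \|g\|^2$ for every $j\ge 0$. Summing over $0 \le j < k < n$,
\[ n Q_1(f) - Q_n(f) \le \sum_{k=0}^{n-1} k\, \|g\|^2 = \frac{n(n-1)}{2}\,\|g\|^2 \le \frac{n^2}{2}\,\|(I-P)f\|^2. \]
Combining with Lemma \ref{lem::dirichlet} finishes the proof.

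There is no real obstacle here; the whole argument is an exercise in the functional calculus for the reversible operator $P$. The only point that deserves care is the sign in the Cauchy--Schwarz step when $P$ has negative eigenvalues (as it can, since we are considering non-lazy SRW), but this is handled automatically by taking absolute values before summing.
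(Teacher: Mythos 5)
Your proof is correct and is essentially the paper's proof in slightly different clothing. The paper introduces first differences $\Delta_j = Q_{j+1}(f) - Q_j(f) = \langle P^j f, (I-P)f\rangle$ and bounds the second differences $|\Delta_j - \Delta_{j-1}| = |\langle P^{j-1}(I-P)f, (I-P)f\rangle| \le \|(I-P)f\|_2^2$ by Cauchy--Schwarz and contractivity, then sums $\Delta_j \ge \Delta_0 - j\delta$; your double-telescoping $nQ_1(f) - Q_n(f) = \sum_{k<n}\sum_{j<k}\langle g, P^j g\rangle$ with $g = (I-P)f$ is term-for-term the same computation, and your note about taking absolute values to handle negative eigenvalues corresponds to the paper's absolute value on $\Delta_j - \Delta_{j-1}$.
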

\begin{proof}We use Lemma \ref{lem::dirichlet}. We need to lower bound $Q_n(f)$, and show that it grows almost linearly. For this, we use the differences and bound second differences as follows:
$$\Delta_j = Q_{j+1}(f) - Q_j(f) = \langle P^j f - P^{j+1}f, f \rangle = \langle(I-P) P^j f, f\rangle = \langle P^j f, (I-P)f\rangle.$$
Thus,
\[\ba |\Delta_j - \Delta_{j-1}| &= |\langle P^{j-1}(I-P)f, (I-P)f\rangle| \\
& \leq \|P^{j-1} (I-P) f\|_2 \cdot \|(I-P) f\|_2 \leq \|(I-P)f\|^2_2 := \delta\,,\ea \]
by Cauchy-Schwarz. Now $\Delta_0 = Q_1(f)$ and $\Delta_j \geq \Delta_0 - j\delta$ whence
$$Q_n(f) = \sum_{j=0}^{n-1} \Delta_j \geq n\Delta_0 - \frac{n(n-1)}{2} \delta \geq n Q_1(f) - \frac{n^2 \delta}{2}\,.$$
Thus,
$$\frac{Q_n(f)}{Q_1(f)} \geq n - \frac{n^2 \|(I-P)f\|^2}{2 Q_1(f)}$$
and the lemma follows from Lemma \ref{lem::dirichlet}.
\end{proof}
Proving the theorem for $G$ infinite is harder; we first give the proof under an additional assumption.
\begin{assumption}\label{a::star} Suppose that $\sum_{j=0}^\infty (P^j \one_{\{x_0\}})(x): =  \mathrm{Green}(x_0, x)$ is in $\ell^2(G)$.
\end{assumption}

\begin{proof}[Proof of Theorem \ref{thm::geomix} (infinite case) assuming Assumption \ref{a::star}]
Note that Lemma \ref{lem::distancebound} gives the statement of theorem if we can find a sequence of functions $f_k$ for which $\frac{\|(I-P)f_k\|^2_2}{ Q_1(f_k)} \to 0$.

Let $\{A_k\}$ be a sequence of F\"olner sets, i.e., $\delta_k := \frac{|\partial_E A_k|}{|A_k|} \to 0$ as $k \to \infty$. Here $\partial_E A$ denotes the \emph{edge-boundary} of the set $A$, i.e. the edges between $A$ and $A^c$. Write $\psi_k = \one_{A_k}$ and $f_k = \sum_{j=0}^\infty P^j \psi_k$. Assumption \ref{a::star} implies that $f_k \in \ell^2(G)$. Note that $(I-P)f_k = \psi_k$ and $f_k(x) = \Ev_x [\sum_{j=0}^\infty \one_{\{X_j\in A_k\}}]$. If $\ldist(x, A_k^c) \geq r$, then $f_k(x) \geq r$, so combining these yields
\begin{align*}
Q_1(f_k)& = \langle(I-P)f_k, f_k \rangle = \sum_{x\in A_k} f_k(x) \geq r |\{x\in A_k : \ldist(x, A_k^c) \geq r\}| \\
&\geq r [|A_k| - d|\partial_E A_k|] = r |A_k| (1 - d\delta_k)\,.
\end{align*}
Letting $k\to \infty$ gives $\liminf_{k\to \infty} \frac{Q_1(f_k)}{|A_k|} \geq r$ whence $\frac{Q_1(f_k)}{|A_k|} \to \infty$ since $r$ was arbitrarily large.
By Lemma \ref{lem::distancebound},
$$\E [\ldist(X_0, X_n)^2] \geq \frac{n}{d} - \frac{n^2}{2d} \frac{|A_k|}{Q_1(f_k)}\,.$$
Letting $k\to \infty$ proves the theorem assuming Assumption \ref{a::star}.
\end{proof}
\subsection*{Removing Assumption \ref{a::star}}
For the next lemma, we recall that if $P$ is  transient or
null-recurrent, then we have the pointwise limit,
\begin{equation}\label{eq:pwlimit}
P^i f \to 0 \quad \textrm{for every } f \in \ell^2(V).
\end{equation}
\begin{lemma}\label{lem::psilem}
Suppose that $P$ satisfies \eqref{eq:pwlimit} and, for some $\theta
\in (0,\frac12)$, there exists an $f \in \ell^2(V)$ with $\|f\|_2=1$
and $\|Pf-f\|_2 \leq \theta$. Then there exists a $\varphi \in
\ell^2(V)$ such that
\begin{equation}\label{eq:psijs}
\frac{\|(I-P) \varphi\|^2_2}{\langle \varphi, (I-P) \varphi\rangle}
\leq 32 \,\theta.
\end{equation}
\end{lemma}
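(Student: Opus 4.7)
The plan is to reduce the problem to a single functional-calculus computation by taking $\varphi$ to be a spectral truncation of $f$ that keeps only the component near the eigenvalue $1$. Since $P$ is self-adjoint on $\ell^2(V,\pi)$ with spectrum in $[-1,1]$, $f$ has a scalar spectral measure $\nu$, a Borel probability measure on $[-1,1]$ (because $\|f\|_2=1$), satisfying $\langle h(P)f,f\rangle=\int h\,d\nu$ for every bounded Borel $h$. In this language the hypothesis $\|Pf-f\|_2\le\theta$ becomes $\int (1-\lambda)^2\,d\nu(\lambda)\le\theta^2$.

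My candidate is $\varphi:=\mathbf{1}_{[1-32\theta,\,1)}(P)f$. To show $\varphi\neq 0$ I would combine two facts. First, Markov's inequality applied to the spectral hypothesis yields $\nu(\{1-\lambda>32\theta\})\le \theta^2/(32\theta)^2=1/1024$, so $\nu([1-32\theta,1])\ge 1023/1024$. Second, the pointwise assumption \eqref{eq:pwlimit} rules out any atom of $\nu$ at $\lambda=1$: otherwise $g:=\mathbf{1}_{\{1\}}(P)f\in\ell^2(V)$ would be nonzero and satisfy $P^j g = g$ pointwise for every $j$, contradicting $P^j g\to 0$. Combining, $\nu([1-32\theta,1))\ge 1023/1024>0$, hence $\|\varphi\|_2^2 = \nu([1-32\theta,1))>0$.

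The ratio bound is then immediate from the functional calculus: $(I-P)\varphi = \mathbf{1}_{[1-32\theta,1)}(P)(I-P)f$ since the multipliers commute, so
\[
\|(I-P)\varphi\|_2^2 = \int_{[1-32\theta,1)}(1-\lambda)^2\,d\nu,\qquad \langle\varphi,(I-P)\varphi\rangle=\int_{[1-32\theta,1)}(1-\lambda)\,d\nu.
\]
On the domain of integration $0<1-\lambda\le 32\theta$, so $(1-\lambda)^2\le 32\theta(1-\lambda)$ pointwise, and integrating gives \eqref{eq:psijs}. (For $\theta\ge 1/16$ the window absorbs all of $[-1,1)$ and $\varphi=f$; the conclusion is then trivial via $(1-\lambda)^2\le 2(1-\lambda)$ on $[-1,1]$, so the interesting regime is small $\theta$.)

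The main subtlety I anticipate is the justification that $\nu$ has no atom at $\lambda=1$, which is the only step where the pointwise hypothesis \eqref{eq:pwlimit} is actually invoked and which requires Borel functional calculus for a reversible (possibly unbounded-degree) $P$. Modulo that point, the proof is just the spectral theorem together with the pointwise inequality $(1-\lambda)^2\le 32\theta(1-\lambda)$ on the spectral window.
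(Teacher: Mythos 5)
Your proof is correct, and it takes a genuinely different route from the paper's. The paper chooses $\varphi$ to be the Ces\`aro sum $\varphi_k=\sum_{i=0}^{k-1}P^if$; the numerator is then trivially bounded by $\|(I-P^k)f\|_2^2\le4$, and all the work is in lower-bounding the denominator, which the paper rewrites as $\langle2\varphi_k-\varphi_{2k},f\rangle$ and attacks through a dyadic decomposition of $a_m=\langle\varphi_{2^m},f\rangle$, using $\langle f,P^jf\rangle\ge1-j\theta$ on one end and \eqref{eq:pwlimit} (to force $a_N/2^N\to0$) on the other, to extract a good $k=2^m$. You instead invoke the spectral theorem for the bounded self-adjoint operator $P$ on $\ell^2(V)$ and take $\varphi$ to be the spectral truncation of $f$ to the window $[1-32\theta,1)$, reducing the ratio bound to the pointwise inequality $(1-\lambda)^2\le32\theta(1-\lambda)$ on that window; \eqref{eq:pwlimit} enters exactly once, to rule out an atom of the spectral measure at $1$, which guarantees the truncation is nonzero and, since $1-\lambda>0$ there, that the denominator is strictly positive. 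Your route is shorter and more conceptual, and it actually yields the bound with any constant strictly greater than $1$ in place of $32$ (widen the window to $[1-c\theta,1)$ for any $c>1$), so it is sharper. The paper's route is more elementary: it works entirely with inner products, contractivity, and self-adjointness, avoiding the Borel functional calculus and producing an explicit test function without reference to spectral projections---a mild advantage in self-containedness, which you correctly identify as the only nontrivial ingredient your argument imports.
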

\begin{proof}[Proof of Theorem \ref{thm::geomix} for infinite $G$ without Assumption \ref{a::star}] The proof follows by picking $f:=\widetilde\psi_k=\one_{A_k}/ \sqrt{ |A_k|}$ for the F\"olner sets defined above. By picking $k$ large enough $\widetilde\psi_k$ satisfies the condition of Lemma \ref{lem::psilem} for arbitrarily small $\theta>0$, since in this case $\|f\|_2^2=|A_k|/ |A_k| =1$ and \[ \|P\widetilde \psi_k-\widetilde\psi_k\|_2^2= \sum_{x\in A_k} \Ev(X_1 \in A^c) \le \frac{|\partial_E A_k  |}{  |A_k|} = \delta_k \to 0.\] Combining then these with Lemma \ref{lem::distancebound} yields the proof.
\end{proof}
\begin{proof}[Proof of Lemma \ref{lem::psilem}]
Given $f \in \ell^2(V)$ and $k \in \mathbb N$, we define
$\varphi_{k} \in \ell^2(V)$ by
$$
\varphi_{k} = \sum_{i=0}^{k-1} P^i f.
$$
First, using $(I-P)\varphi_{k} = (I-P^k) f$ and the fact that $P$ is
a contraction, we have
\begin{equation}
\label{eq:lapest} \|(I-P)\varphi_{k}\|^2_2 \leq 4 \|f\|^2_2.
\end{equation}

On the other hand,
\begin{eqnarray*}
\langle \varphi_{k}, (I-P)\varphi_{k} \rangle &=& \langle \varphi_{k}, (I-P^k) f\rangle \\
&=&
\left\langle (I-P^k) \sum_{i=0}^{k-1} P^i f, f \right\rangle \\
&=& \langle 2 \varphi_{k}-\varphi_{2k}, f \rangle,
\end{eqnarray*}
where in the second line we have used the fact that $I-P^k$ is
self-adjoint. Combining this with \eqref{eq:lapest} yields
\begin{equation}\label{eq:ratio1}
\frac{\|(I-P)\varphi_{k}\|^2_2}{\langle \varphi_{k},
(I-P)\varphi_{k}\rangle} \leq \frac{4 \|f\|^2_2}{\langle 2
\varphi_{k}-\varphi_{2k},f\rangle}.
\end{equation}
%To finish the proof, we now find an $f$ with $\|f\|=1$ and $k \in \mathbb N$ such that
%$\langle 2 \varphi_{k}-\varphi_{2k},f\rangle \geq \frac{1}{8\theta}$.  This is the content of the following lemma.
The following claim will conclude the proof.

\medskip

%\begin{claim}\label{cl:phi}
\noindent{\bf Claim}: There exists a $k \in \mathbb N$ such that
\begin{equation}\label{eq:wantit}
\langle 2 \varphi_{k} - \varphi_{2k}, f \rangle \geq
\frac{1}{8\theta}.
\end{equation}
%\end{claim}
%\begin{proof}
It remains to prove the claim. By assumption, $f$ satisfies
$\|f\|_2=1$, and $\|Pf-f\|_2 \leq \theta$.  Since $P$ is a contraction,
we have $\|P^j f - P^{j-1} f\|_2\leq \theta$ for every $j \geq 1$,
and thus by the triangle inequality, $\|P^j f - f\|_2 \leq j \theta$
for every $j \geq 1$. It follows by Cauchy-Schwarz that $\langle f,
(I-P^j) f \rangle \leq j\theta$, therefore
\begin{equation*}\label{eq:innerprod}
\langle f, P^j f \rangle \geq 1-j \theta.
\end{equation*}
Thus for every $j \geq 1$,
$$
\langle \varphi_{2^j}, f \rangle \geq 2^j (1-2^j \theta).
$$

Fix $\ell \in \mathbb N$ so that $2^{\ell} \theta \leq \frac12 \leq
2^{\ell+1} \theta$, yielding
\begin{equation}\label{eq:M}
\langle \varphi_{2^{\ell}}, f \rangle \geq \frac{1}{8\theta}.
\end{equation}
%\end{proof}
\medskip

Now, let $a_m = \langle \varphi_{2^m}, f \rangle$, and write, for
some $N \geq 1$,
$$
a_{\ell} - \frac{a_{N}}{2^{N-\ell}} = \sum_{m=\ell}^{N-1} \frac{2
a_m - a_{m+1}}{2^{m-\ell+1}}.
$$
By \eqref{eq:pwlimit}, we have $\langle P^i f, f \rangle \to 0$ as
$i \to \infty$, hence $\lim_{N \to \infty} \frac{a_N}{2^N} = 0$.
Using \eqref{eq:M} and taking $N \to \infty$ on both sides above
yields
$$
\frac{1}{8\theta} \leq a_{\ell} = \sum_{m=\ell}^{\infty} \frac{2 a_m
- a_{m+1}}{2^{m-\ell+1}}.
$$
Since $\sum_{m=\ell}^{\infty} \frac{1}{2^{m-\ell+1}}=1$, there must
exist some  $m \geq \ell$ with $2 a_m - a_{m+1} \geq
\frac{1}{8\theta}$. This establishes the claim~(\ref{eq:wantit}) for
$k=2^m$ and, in view of (\ref{eq:ratio1}),  completes the proof of
the lemma.
%\end{proof}
\end{proof}
%%%%%%%%%%%%%%%%%%%%%%%%%%
%%%%%%%%%%%%%%%%%%%%%%%%%%%%
\section{Balanced random walks with interaction}
First we start with some examples.
\subsection{Some examples}

\begin{example}
  A martingale $(X_n)_n$ in $\Z^2$, moves horizontally at times
  $t\in[2^{2k},2^{2k+1})$ with $k$ even and vertically
  $t\in[2^{2k+1},2^{2k+2})$ (to nearest neighbours, with equal
  probabilities in both cases).
\end{example}

Informally, this process is between one and two dimensional, as it has
long one-dimensional segments.

\begin{claim}
  This process is transient.
\end{claim}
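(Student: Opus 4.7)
My plan is to exploit the block structure of the process to decouple the two coordinates, then apply Borel--Cantelli at the block level rather than pointwise in time. Partition the time axis into dyadic intervals $I_j = [2^j, 2^{j+1})$ of length $2^j$; during $I_j$ the walk moves only horizontally if $j$ is even and only vertically if $j$ is odd. Writing $X_t = (H_t, V_t)$, let $F_j$ denote the ``frozen'' coordinate during $I_j$ (so $F_j = V$ if $j$ is even and $F_j = H$ if $j$ is odd). The key observation is that $F_j$ is \emph{constant} on $I_j$: so $X_t = 0$ for some $t \in I_j$ forces $F_j \equiv 0$ throughout $I_j$.

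The frozen value on $I_j$ is just the total displacement contributed by the earlier blocks of opposite parity; since those blocks are mutually independent and each performs simple random walk, $F_j|_{I_j}$ is distributed as a single $\pm 1$ random walk $S_{n_j}$ on $\Z$ of length $n_j = \sum_{i < j,\ i \not\equiv j \pmod 2} 2^i = \Theta(2^j)$. By the local central limit theorem for simple random walk (equivalently Stirling's formula applied to $\binom{2n}{n} 2^{-2n}$),
\[
\Pv(F_j \equiv 0 \text{ on } I_j) = \Pv(S_{n_j} = 0) = O(n_j^{-1/2}) = O(2^{-j/2}).
\]
Summing in $j$ gives a finite total, so Borel--Cantelli ensures that for all sufficiently large $j$ we have $F_j \not\equiv 0$ on $I_j$, almost surely. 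Since $\{t \ge 1 : X_t = 0\} \subset \bigcup_{j : F_j \equiv 0 \text{ on } I_j} I_j$ is then almost surely a finite union of finite sets, the walk returns to the origin only finitely often, i.e., it is transient.

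The mildly interesting point, and presumably the reason for the remark preceding the claim, is that a naive ``expected number of returns'' argument fails here: a block-by-block estimate using the local CLT on \emph{both} coordinates gives $\sum_t \Pv(X_t = 0) = +\infty$, consistent with the fact that $X_t$ alone is not Markov. Returns come in \emph{bursts} -- on the rare event $\{F_j \equiv 0 \text{ on } I_j\}$ (probability $\Theta(2^{-j/2})$), the active coordinate on $I_j$ is a long one-dimensional walk that visits $0$ of order $2^{j/2}$ times, which exactly cancels the smallness of the event. The right object to apply Borel--Cantelli to is therefore the block-level event, not the pointwise event $\{X_t = 0\}$; this replacement is essentially the whole content of the proof, and I do not anticipate any technical obstacle beyond verifying the variance computation $n_j = \Theta(2^j)$.
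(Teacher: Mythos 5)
Your proof is correct and takes essentially the same approach as the paper: both note that a visit to a fixed target during block $I_j$ forces the frozen coordinate there to equal the target value, an event of probability $O(2^{-j/2})$ by the local CLT, which is summable in $j$, so Borel--Cantelli at the block level yields only finitely many returns. The paper states this more tersely; your closing remark on why the naive pointwise estimate $\sum_t \Pv(X_t=0)=\infty$ does not contradict transience (returns cluster into bursts, and $X_t$ alone is not Markov) is a nice supplementary observation but is not needed for the argument.
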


\begin{proof}
  In the $k$th horizontal segment, the process can only visit $x$ if it is
  on the right horizontal line, which has probability
  $O(1/\sqrt{2^k})$. Since this is summable, the process only visits $x$ finitely many times. Similarly for vertical segments.
\end{proof}

\begin{example}[Benjamini--Kozma--Schapira \cite{BKS}]
  A martingale $(X_n)_n$ in $\Z^2$, moves vertically on the first visit to
  each site, and horizontally on subsequent visits.
\end{example}

\begin{question}
  Is this recurrent or transient?  \cite{BKS} includes this and several
  other open problems of similar nature.
\end{question}

\noindent\begin{minipage}{.7\linewidth}
\begin{example}[Nina Gantert; see Ofer Zeitouni's St.\ Flour lecture notes on RWRE]
  On $\Z^2$ again, a martingale moves horizontally with probability $2/3$
  (long arrows) and vertically with probability $1/3$ when $|x|<|y|$, and
  with opposite probabilities otherwise (including $|x|=|y|$).
\end{example}
\end{minipage}
\hfill
\begin{tikzpicture}[scale=.33]
  \path[use as bounding box] (-4,1)  rectangle (4.3, 3);
  \draw (-4,-4) -- (4,4);  \draw[-] (-4,4) -- (4,-4);
  \draw[<->] (3.5,-1) -- (3.5,1);   \draw[<->] (3,0) -- (4,0);
  \draw[<->] (-3.5,-1) -- (-3.5,1);   \draw[<->] (-3,0) -- (-4,0);
  \draw[<->] (-1,3.5) -- (1,3.5);   \draw[<->] (0,3) -- (0,4);
  \draw[<->] (-1,-3.5) -- (1,-3.5);   \draw[<->] (0,-3) -- (0,-4);
\end{tikzpicture}

\begin{proposition}\label{prop:eg3}
  This process is transient.
\end{proposition}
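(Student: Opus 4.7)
The plan is to establish transience by showing $R_n:=\sqrt{X_n^2+Y_n^2}\to\infty$ almost surely; for an irreducible Markov chain on $\Z^2$ this is equivalent to each state being visited only finitely often. A useful first observation is that $R_n^2-n$ is a martingale, since a horizontal step changes $X^2+Y^2$ by $\pm 2X+1$ (mean $+1$) and a vertical step by $\pm 2Y+1$ (mean $+1$) irrespective of which is chosen. Hence $\Ev R_n^2=R_0^2+n$. This by itself is not enough—the recurrent two-dimensional simple random walk obeys the same identity—so the argument must exploit the degenerate rank-one step covariance and the attraction to the diagonals $|x|=|y|$.

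My approach is a Lyapunov-function argument. I look for a positive function $h:\Z^2\to(0,\infty)$ with $h(x,y)\to 0$ as $x^2+y^2\to\infty$, such that $(h(X_n))$ is a supermartingale outside some finite set $K$. Given such $h$, optional stopping yields $\Pv_{(x_0,y_0)}(\text{ever hit }K)\le h(x_0,y_0)/\min_K h$, which tends to $0$ as $(x_0,y_0)$ recedes; combined with the martingale identity for $R_n^2-n$ (which guarantees the walker leaves every finite set starting from any state) this forces $R_n\to\infty$. The natural candidate is $h_0(x,y):=(x^2+y^2)^{-\alpha}$ with $\alpha>0$ small. A discrete Taylor expansion gives, in the region $|x|<|y|$,
\[
\Ev\!\left[h_0(X_{n+1})-h_0(X_n)\mid X_n=(x,y)\right]=\tfrac{\alpha}{3}\,r^{-2\alpha-4}\!\left[(4\alpha+1)x^2+(2\alpha-1)y^2\right]+O(r^{-2\alpha-3}),
\]
with the symmetric expression when $|x|>|y|$. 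This is nonpositive outside a thin wedge around the diagonals $|x|=|y|$ whose angular width shrinks with $\alpha$, so $h_0$ is already a supermartingale away from the diagonal.

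The hard part is to modify $h_0$ inside the wedge. I would try $h(x,y):=h_0(x,y)\cdot\psi(u)$ with $u:=\min(|x|,|y|)/\max(|x|,|y|)\in[0,1]$ and $\psi:[0,1]\to(0,1]$ a smooth function close to $1$ on most of the interval but dipping near $u=1$. The heuristic is that in the region $|x|<|y|$ the walker prefers horizontal steps, so $|X|$ fluctuates faster than $|Y|$ and the ratio $u$ is pushed toward $1$; symmetrically in the other region. Hence $\psi(u)$ contributes a negative drift inside the wedge, and with $\alpha$ and the profile of $\psi$ tuned appropriately this contribution should overwhelm the small positive drift of $h_0$ there while barely affecting the clean negative drift of $h_0$ outside the wedge. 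The main obstacle is a careful quantitative verification that the two drifts combine to give a supermartingale uniformly in $(x,y)$ outside a finite set, and that the discrete Taylor remainders do not spoil the balance; the sign book-keeping in the narrow diagonal strip is the most delicate part. Note that the chain is \emph{not} reversible—detailed balance fails on edges straddling the diagonal, because the equations $\pi(x,y)p_H(x,y)=\pi(x+1,y)p_H(x+1,y)$ and the analogous vertical ones have no common solution—so the Varopoulos--Carne bound does not apply directly, and the Lyapunov route above seems the natural one.
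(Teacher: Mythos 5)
Your route --- a superharmonic Lyapunov function vanishing at infinity, Lemma~\ref{lem:transient_function}, plus the martingale $R_n^2-n$ to guarantee escape from finite sets --- is legitimate in principle, and your Taylor expansion for $h_0(x,y)=(x^2+y^2)^{-\alpha}$ is correct. But you stop precisely at the step on which the whole argument rests. You show that $h_0$ has the \emph{wrong} (positive) drift in a wedge around the diagonals, propose to repair it with $h=h_0\cdot\psi(u)$ where $u=\min(|x|,|y|)/\max(|x|,|y|)$, and then declare the sign bookkeeping in that wedge ``the most delicate part'' and leave it undone. This is not a detail: the radial drift coming from $h_0$ and the angular drift coming from $\psi(u)$ are of the same order $r^{-2\alpha-2}$, so whether the negative angular contribution actually dominates the positive radial one --- uniformly over the wedge, compatibly with the adjacent region where $h_0$ alone must remain superharmonic, and robustly against the discrete remainder terms --- requires a genuine estimate that you have not supplied. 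As written this is a plan, not a proof.

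You also missed the shortcut the paper has already set up. Because the chain is not reversible --- as you correctly observe --- superharmonic \emph{functions} and excessive \emph{measures} are genuinely different objects, and it is the latter that works painlessly here. Lemma~\ref{lem:transient_measure} only asks for a positive $\mu$ with $\mu P\le\mu$ pointwise and $\mu P\neq\mu$. Take $\mu\equiv 1$, so the question is whether the column sums $\sum_x p(x,y)$ are at most $1$. At a generic site $y=(a,b)$ (away from the axes and diagonals) the four neighbors send mass $\tfrac13,\tfrac13,\tfrac16,\tfrac16$ into $y$, summing to exactly $1$, and the asymmetry of the rule near the origin only ever lowers this: for instance the column sum at $(0,0)$ is $4\cdot\tfrac16=\tfrac23<1$. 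Hence $\mu\equiv1$ is excessive and non-invariant, and transience follows in three lines with no Taylor expansion, no wedge correction, and no remainder control.
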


For the proof we use the following basic results.

\begin{lemma}\label{lem:transient_function}
  If a Markov chain on $S$ has non-constant $\phi:S\to\R^+$ with $P\phi\leq
  \phi$ (pointwise) then the chain is transient.
\end{lemma}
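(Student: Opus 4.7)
The plan is to use the martingale convergence theorem applied to $\phi(X_n)$ together with the recurrence hypothesis to force $\phi$ to be constant, yielding a contradiction. Under the (implicit) assumption that the chain is irreducible, the result reduces to checking that a non-constant superharmonic function cannot coexist with recurrence.

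First I would observe that $P\phi\le \phi$ says precisely that $M_n:=\phi(X_n)$ is a supermartingale with respect to the natural filtration of the chain, and $M_n\ge 0$ since $\phi:S\to\R^+$. By Doob's supermartingale convergence theorem, there is a random variable $L$ with $M_n\to L$ almost surely (and $L$ is finite a.s.\ because $\Ev M_n \le \Ev M_0<\infty$ when starting from a fixed state).

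Next I would argue by contradiction. Assume the chain is recurrent (and irreducible), and start it from any state $x_0\in S$. Then for every $x\in S$, almost surely $X_n=x$ for infinitely many $n$, so the sequence $\phi(X_n)$ takes the value $\phi(x)$ infinitely often. Combining this with a.s.\ convergence $\phi(X_n)\to L$, we conclude $L=\phi(x)$ a.s.\ for \emph{every} $x\in S$. Since the limit $L$ is a single (a.s.) constant, this forces $\phi$ to be constant on $S$, contradicting the hypothesis that $\phi$ is non-constant. Hence the chain cannot be recurrent, i.e.\ it is transient.

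The only subtlety — which I would flag in the proof — is the standing assumption of irreducibility: without it, one could have $\phi$ non-constant across distinct communicating classes while each class is recurrent. In the setting of the paper (where Lemma \ref{lem:transient_function} is applied to a specific nearest-neighbour chain on $\Z^2$) irreducibility is automatic, and the argument above suffices. I do not expect any serious technical obstacle; the only care needed is in justifying the existence of a finite a.s.\ limit of $M_n$ starting from a fixed state, which is immediate from the non-negative supermartingale convergence theorem.
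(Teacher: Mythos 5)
Your proof is correct and is essentially the same argument the paper gives in one line: $\phi(X_t)$ is a nonnegative supermartingale, hence converges a.s., which contradicts recurrence (since a recurrent irreducible chain visits every state infinitely often). You have just made explicit the use of the supermartingale convergence theorem and the (implicit) irreducibility hypothesis, both of which the paper glosses over.
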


\begin{proof}
  $\phi(X_t)$ is a non-negative super-martingale, and so must converge,
  which contradicts recurrence.
\end{proof}

\begin{lemma}[Excessive measure]\label{lem:transient_measure}
  If $\mu P \leq \mu$ pointwise and $\mu P \neq \mu$ for a positive measure
  $\mu$ on $S$, then $(X_t)$ is transient.
\end{lemma}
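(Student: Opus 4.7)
The plan is to convert the excessive-measure hypothesis into finiteness of the Green's function, and then invoke the standard criterion (for an irreducible chain, $G(y_0,y_0)<\infty$ is equivalent to transience). No need to invoke Lemma \ref{lem:transient_function} via a clever superharmonic function; a direct telescoping argument is cleaner.

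First I would iterate the hypothesis: since $\mu P \leq \mu$ and $P$ has nonnegative entries, $\mu P^{n+1} = (\mu P^n) P \leq \mu P^n$, so by induction
\[ \mu P^{n+1} \leq \mu P^n \leq \cdots \leq \mu \qquad \text{pointwise, for every } n\geq 0.\]
Set $g := \mu - \mu P$; by assumption $g$ is a nonnegative measure, not identically zero, say $g(y_0) > 0$. The key computation is the telescoping identity
\[ \sum_{n=0}^{N-1} g P^n \;=\; \sum_{n=0}^{N-1} (\mu P^n - \mu P^{n+1}) \;=\; \mu - \mu P^N \;\leq\; \mu.\]
Letting $N\to\infty$ by monotone convergence, and interchanging the order of summation, yields for every $x\in S$
\[ \sum_{y\in S} g(y)\, G(y,x) \;=\; \sum_{n\geq 0} g P^n(x) \;\leq\; \mu(x) \;<\; \infty,\]
where $G(y,x) := \sum_n P^n(y,x)$ is the Green's function.

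Since $g(y_0)>0$, this forces $G(y_0,x)<\infty$ for every $x$; in particular $G(y_0,y_0)<\infty$, which is precisely the statement that the walk started from $y_0$ returns finitely often a.s.\ and hence is transient at $y_0$. For an irreducible chain this immediately propagates: $G(x,x)<\infty$ for every $x$, so $(X_t)$ is transient.

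The one minor obstacle is a tacit regularity assumption: one needs $\mu$ to be $\sigma$-finite (equivalently, locally finite on $S$) so that `$\mu(x)<\infty$' is meaningful and so that the Fubini/monotone-convergence interchange above is valid. If the chain fails to be irreducible, the argument still shows transience on the communicating class of $y_0$ where $g$ is supported, and one restricts attention there; this is implicit in the statement "the chain is transient" as used throughout the section.
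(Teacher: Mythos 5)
Your proof is correct and takes a genuinely different route from the paper. The paper argues by contradiction via the \emph{time-reversed chain}: if $P$ were recurrent (and irreducible), it would admit the stationary occupation measure $\pi$; one forms $\hat p(x,y)=\pi(y)p(y,x)/\pi(x)$, observes that $\hat P$ has the same return probabilities and is hence also recurrent, and then checks that $\phi=\mu/\pi$ is a non-constant non-negative function with $\hat P\phi\le\phi$, which by Lemma \ref{lem:transient_function} forces $\hat P$ to be transient --- a contradiction. Your argument bypasses the reverse chain and Lemma \ref{lem:transient_function} entirely: you telescope $\sum_{n<N}(\mu P^n-\mu P^{n+1})=\mu-\mu P^N\le\mu$, let $N\to\infty$, and interchange sums (Tonelli, non-negative terms) to get $\sum_y g(y)\,G(y,x)\le\mu(x)$ with $g=\mu-\mu P\ge0$ and $g(y_0)>0$, which bounds $G(y_0,y_0)$ and gives transience by the Green's-function criterion. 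The paper's approach buys a conceptually tidy reduction to the already-proved harmonic-function lemma and makes the duality between excessive measures and super-harmonic functions explicit; yours is more self-contained and elementary, avoiding any appeal to the reverse chain, at the cost of invoking the standard (but unproved in these notes) equivalence ``$G(y_0,y_0)<\infty$ $\iff$ transient.'' Your caveat about $\sigma$-finiteness of $\mu$ is well placed and matches the paper's implicit convention; both arguments also implicitly assume irreducibility, which holds in all applications here.
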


\begin{proof}
  For any recurrent irreducible chain we have a stationary measure given by
  $\pi(x) = \E_a \sum_{i=0}^{\tau_a^+-1} 1_{X_i=x}$, where $\tau_a^+$ is
  the return time, and $a$ is an arbitrary reference state.  Consider the
  reverse chain with transitions $\hat p(x,y) =
  \frac{\pi(y)p(y,x)}{\pi(x)}$. Then $\pi$ is also stationary for $\hat P$.
  Moreover $P^n_{xx} = \hat P^n_{xx}$, and so $\hat P$ is also recurrent.

  In our case, the assumptions imply that $\phi = \frac\mu\pi$ has $\hat
  P\phi \leq \phi$.  By Lemma \ref{lem:transient_function} $\hat P$ is transient, and so
  $P$ must be transient as well.
\end{proof}

\begin{proof}[Proof of Proposition \ref{prop:eg3}]
  Consider $\mu\equiv 1$.  Then $\mu P\leq \mu$ and is strictly smaller at
  $0$.
\end{proof}

\subsection{Walks with few step distributions}

\cite{BKS} raise the following questions.

\begin{question}
  Fix two measures $\mu_1,\mu_2$ on $\Z^d$, $d\geq 3$ with mean $0$ and
  bounded support of full dimension.  Consider a process that makes steps
  with law $\mu_2$ on the first visit to a site, and $\mu_1$ on all
  subsequent visits.  When is this recurrent/transient?
\end{question}

\begin{question}
  More generally, what if the process moves from $X_t$ by $\mu_1$ or
  $\mu_2$ and the choice is adapted to $\CF_t$.
\end{question}
The next theorem answers these questions (from \cite{SPP12})
\begin{theorem}\label{thm:mainSPP}
  Fix any two measures $\mu_1,\mu_2$ on $\Z^d$, $d\geq 3$ with mean $0$ and
  bounded support of full dimension.  Let $(X_t)_t$ be a process such that
  conditioned on $\CF_t$ the step $X_{t+1}-X_t$ has law either $\mu_1$ or
  $\mu_2$.  Then $(X)$ is transient.
\end{theorem}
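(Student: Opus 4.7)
The plan is to construct a bounded, positive, non-constant function $\phi\colon \Z^d \to (0,\infty)$ that is superharmonic for both transition kernels $P_i(x,y):=\mu_i(y-x)$, meaning $P_i\phi \le \phi$ pointwise for $i=1,2$. Given such a $\phi$, the process $M_t:=\phi(X_t)$ is a nonnegative supermartingale for any $\CF_t$-adapted choice of increment law, since $\E[\phi(X_{t+1})\mid\CF_t]=P_{i_t}\phi(X_t)\le\phi(X_t)$ whichever $i_t\in\{1,2\}$ is chosen. By the martingale convergence theorem, $\phi(X_t)\to L$ almost surely. If $|X_t|$ did not tend to infinity, the walk would be confined to a finite set eventually (since $\phi\to 0$ at infinity), so some site $x_0$ would be visited infinitely often; but then $L=\phi(x_0)$, while after each return the very next step lands at a site $x_0+\xi$, with $\xi$ drawn from $\mu_{i_t}$, and using that $\mu_i$ has full-dimensional support while $\phi$ is strictly maximized only at its peak, there is a uniformly positive probability that $|\phi(X_{t+1})-\phi(x_0)|\ge\delta$ for some fixed $\delta>0$, contradicting convergence. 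This is the adaptive analogue of Lemma~\ref{lem:transient_function} and forces transience.

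For the construction I would take the ansatz $\phi(x) := (x^{\top} A x + 1)^{-\alpha}$ for some positive-definite matrix $A$ and a small $\alpha>0$. Writing $\Sigma_i$ for the covariance of $\mu_i$ and using that $\mu_i$ has mean zero and bounded support, a second-order Taylor expansion yields
\[
 (P_i\phi-\phi)(x) \;=\; \tfrac12 \sum_{j,k}(\Sigma_i)_{jk}\,\partial_j\partial_k\phi(x) \;+\; O\!\bigl(\phi(x)/|x|^{3}\bigr).
\]
Computing the Hessian of $\phi$ explicitly and changing variables to $y=A^{1/2}x$, the condition $P_i\phi\le\phi$ reduces in the large-$|x|$ regime to the eigenvalue inequality
\[
 2(\alpha+1)\,\lambda_{\max}(M_i) \;\le\; \tr(M_i), \qquad M_i:=A^{1/2}\Sigma_iA^{1/2},\ i=1,2,
\]
while at small $|x|$ superharmonicity is automatic: at the origin $\phi$ is strictly maximized so $P_i\phi(0)<\phi(0)$, and intermediate $x$ inherit the bound via the same Hessian computation, using the slack coming from the ``$+1$'' in the denominator (small $|x|$ makes the leading-order term $2(\alpha+1) y^{\top}M_i y$ small relative to $(|y|^2+1)\tr(M_i)$).

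The main obstacle is the matrix-theoretic step of producing a single $A\succ 0$ for which both $2\lambda_{\max}(M_i)<\tr(M_i)$ hold simultaneously; this is exactly where the hypothesis $d\ge 3$ enters. For a single covariance the choice $A=\Sigma_i^{-1}$ gives $M_i=I$ with $\lambda_{\max}(M_i)/\tr(M_i)=1/d\le 1/3$, well below the threshold $1/2$, so each individual feasible set $\CA_i:=\{A\succ 0:2\lambda_{\max}(M_i)<\tr(M_i)\}$ is open and nonempty in the cone of positive-definite matrices. My plan is to prove $\CA_1\cap\CA_2\ne\emptyset$ by a continuity/interpolation argument along the curve $A_t:=((1-t)\Sigma_1+t\Sigma_2)^{-1}$, $t\in[0,1]$, tracking the two ratios $\lambda_{\max}(M_i(t))/\tr(M_i(t))$ and showing that the $d\ge 3$ slack prevents both from crossing $1/2$ simultaneously, or, failing that, by an explicit choice such as the symmetrized geometric mean $A=(\Sigma_1^{1/2}\Sigma_2\Sigma_1^{1/2})^{-1/2}$. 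For $d=2$ the inequality degenerates to $\lambda_{\max}(M_i)<\lambda_{\min}(M_i)$, which is never satisfiable, in agreement with the recurrence of mean-zero bounded walks on $\Z^2$. I expect this matrix-theoretic feasibility lemma to be the genuinely non-trivial ingredient; once it is in hand, the rest reduces to the Taylor-plus-supermartingale argument sketched above.
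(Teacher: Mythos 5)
Your overall strategy is the paper's: a superharmonic Lyapunov function of the form $(x^\top A x + 1)^{-\alpha}$, a second-order Taylor computation reducing superharmonicity at large $|x|$ to the eigenvalue condition $2\lambda_{\max}(M_i) < \tr(M_i)$ on the transformed covariances, and then a matrix-feasibility lemma producing one $A$ that works for both $\mu_1$ and $\mu_2$ simultaneously. Both the paper and you observe that once $\phi$ is superharmonic for both kernels, $\phi(X_t)$ is a nonnegative supermartingale regardless of the $\CF_t$-adapted choice, which is how the interaction is disposed of (this is the role of Lemma~\ref{lem:transient_function} together with the remark in the paper that one $\phi$ may serve all increment laws).

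However, the feasibility lemma, which you rightly flag as the crux, is left unproven, and both routes you suggest have concrete counterexamples. For the interpolation $A_t=((1-t)\Sigma_1+t\Sigma_2)^{-1}$, take $\Sigma_1=\diag(\varepsilon,1,1)$, $\Sigma_2=\diag(1,1,\varepsilon)$. Writing $u=(1-t)\varepsilon+t$, $v=(1-t)+t\varepsilon$ (so $u+v=1+\varepsilon$), one gets $M_1(t)=\diag(\varepsilon/u,1,1/v)$ and $M_2(t)=\diag(1/u,1,\varepsilon/v)$, and the condition $2\lambda_{\max}<\tr$ for $M_1(t)$ simplifies to $u^2<\varepsilon(1+\varepsilon)$, with $v^2<\varepsilon(1+\varepsilon)$ for $M_2(t)$ by symmetry. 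Since $u+v=1+\varepsilon$, these cannot both hold once $\varepsilon\le 1/3$, so the curve contains no feasible point — even though $A=I$ works directly for this pair. The geometric mean also fails: with $\Sigma_1=I$, $\Sigma_2=\diag(1,1,\varepsilon)$, your $A=\Sigma_2^{-1/2}$ gives $M_1=\diag(1,1,\varepsilon^{-1/2})$, for which $2\lambda_{\max}=2\varepsilon^{-1/2}>2+\varepsilon^{-1/2}=\tr$ as soon as $\varepsilon<1/4$.

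The paper closes this gap with a short explicit construction: change coordinates so that $M_1=I$; orthogonally diagonalize $M_2=\diag(a,b,c)$ with $a\ge b\ge c>0$ (orthogonal conjugation keeps $M_1=I$); then apply $A=\diag(\sqrt{b/a},1,1)$. The pair becomes $\diag(b/a,1,1)$ and $\diag(b,b,c)$: the first has $\lambda_{\max}=1$ and trace $b/a+2>2$, the second has $\lambda_{\max}=b$ and trace $2b+c>2b$, so both satisfy the condition. You would need something equally concrete, or a genuinely verified topological argument, to make your proposal complete; as written the feasibility step is a real gap.
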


In contrast, there are recurrent processes with three possible step
distributions:

\begin{example}
  In $\Z^3$, make a step of $\pm1$ in the coordinate with maximal absolute
  value with probability $1-2\ve$, and in each of the other coordinates
  with probability $\ve$ each.
\end{example}

\begin{theorem}
  This process is recurrent for $\ve>0$ small enough.  In $\Z^d$ a similar
  construction works with $d$ measures.
\end{theorem}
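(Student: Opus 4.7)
The guiding intuition is that for small $\ve$ the walk essentially performs a one-dimensional simple random walk on whichever coordinate currently has the largest absolute value, since that coordinate is selected with probability $1-2\ve$ at each step. Because 1D SRW is recurrent, the maximum coordinate should return to zero in time of order $R^{2}$, where $R=\max_i|x_i|$. In that same window each of the other two coordinates moves only at rate $\ve$, so by Azuma--Hoeffding it accumulates only $O(R\sqrt{\ve})$ displacement. Hence each time the max hits zero, the new max is smaller by a factor of order $\sqrt{\ve}$, and iterating should force $R\to 0$ almost surely.

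The natural Lyapunov candidate is $\phi(x)=\log R(x)$. In the generic region where one coordinate $x_j$ is the strict maximum (by at least $1$), a direct computation gives
\[
  (P\phi)(x)-\phi(x) \;=\; \tfrac{1-2\ve}{2}\log\!\bigl(1-R^{-2}\bigr) \;\sim\; -\tfrac{1-2\ve}{2R^{2}},
\]
matching the superharmonicity of $\log|y|$ for 1D SRW. If this bound held on all of $\Z^{3}\setminus F$ for some finite $F$, the standard recurrence criterion --- a non-negative Lyapunov function tending to infinity and superharmonic off a finite set forces an irreducible chain to visit $F$ infinitely often --- would conclude the proof.

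The obstruction is the tie set. If $|x_j|=|x_k|=R$, then moving the tied coordinate $x_j$ away from $0$ strictly increases $R$, whereas moving it toward $0$ leaves $R$ unchanged because $x_k$ still attains $R$. A short computation in that state gives $(P\phi-\phi)(x)\sim +\tfrac{1-2\ve}{2R}$, a positive drift of much larger order than the $-R^{-2}$ generic gain. So $\phi=\log R$ alone is not superharmonic, and a single-step Lyapunov inequality will not suffice.

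I would therefore replace the pointwise inequality by a cycle/epoch argument. Define $T_0=0$ and inductively $T_{n+1}=\inf\{t>T_n:X^{j_n}_t=0\}$, where $j_n\in\{1,2,3\}$ is the index of a maximal coordinate at time $T_n$, and set $R_n=R(X_{T_n})$. Each $X^{j_n}_t$ is a discrete-time martingale with $\pm 1$ increments whenever it moves, and it moves at rate at least $\ve$; so standard gambler's-ruin estimates give that $T_{n+1}-T_n$ is of order $R_n^{2}$ (up to a factor depending on $\ve$) with probability bounded away from zero. On the same event, Azuma--Hoeffding applied to the rate-$\ve$ walks on the other two coordinates shows that each of them accumulates at most $O(R_n\sqrt{\ve})$ displacement. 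Combining, there is $p_0>0$, independent of $n$, and an absolute constant $C$, with
\[
  \Pv\bigl[R_{n+1}\le CR_n\sqrt{\ve}\,\big|\,\CF_{T_n}\bigr]\ge p_0
\]
whenever $R_n$ is large. For $\ve$ small a Borel--Cantelli / geometric-decay argument then forces $R_n\to 0$ almost surely, giving recurrence at the origin. The main obstacle I anticipate is controlling the event that the identity of the maximal coordinate switches during a cycle (a second coordinate can temporarily rise to tie $X^{j_n}$); the displacement estimate for the non-maximal coordinates must be arranged to survive such switches, which I would handle by restricting to the good event that no switch occurs before $X^{j_n}$ first hits zero, and showing this event has probability bounded away from $0$ uniformly in $R_n$ for sufficiently small $\ve$.
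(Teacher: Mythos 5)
The paper gives only a one-line sketch --- ``the proof is based on careful construction of a Lyapunov function,'' motivated by the Bessel-process heuristic --- so there are no details to compare against, and your epoch/cycle argument is in any case a different route. Your diagnosis of the tie-set obstruction to the naive candidate $\phi=\log R$ is correct and worth keeping: the drift there is $+\Theta(R^{-1})$, swamping the $-\Theta(R^{-2})$ gain elsewhere, which is presumably exactly why the paper describes its Lyapunov construction as ``careful.''

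The epoch argument, however, has genuine gaps. The contraction claim $\Pv[R_{n+1}\le CR_n\sqrt\ve\mid\CF_{T_n}]\ge p_0$ is not justified: at time $T_n$ the two non-$j_n$ coordinates are only known to lie in $[0,R_n]$, and if one of them is of order $R_n$ it will still be of order $R_n$ at time $T_{n+1}$ (an epoch of length $\Theta(R_n^2)$ only adds $O(R_n\sqrt\ve)$ of fluctuation to a rate-$\ve$ coordinate), so $R_{n+1}\approx R_n$ occurs with probability bounded away from zero. The ``good event that no switch occurs before $X^{j_n}$ first hits zero'' to which you propose to restrict actually has probability tending to $0$ as $R_n\to\infty$, since $|X^{j_n}|$ decays to $0$ while the other coordinates do not, so a switch must occur; this removes the proposed workaround for the obstacle you correctly anticipate. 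And the complementary bad event is uncontrolled: the hitting time of $0$ by a one-dimensional $\pm1$ martingale has infinite mean and heavy tails, so on long epochs the other coordinates can grow without bound and the increment $\log R_{n+1}-\log R_n$ has a heavy upper tail. A positive-probability contraction alone therefore does not force $R_n\to 0$ (nor is that convergence the right target for a balanced walk), and it does not by itself deliver the Foster-type negative-drift estimate for $\log R_n$ one would actually need. Controlling the coordinate switches and the heavy-tailed epoch lengths is the hard part of the problem, and the proposal does not resolve it.
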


Compare this to a continuous diffusion with larger variance in the radial
direction.  The absolute value is a Bessel process, and by adjusting the
covariance matrix, we can control the dimension and even make it less than
$2$, making the process recurrent.  The proof is based on careful
construction of a Lyapunov function.

\begin{proof}[Proof of Theorem \ref{thm:mainSPP}]
  First we investigate the case of a single increment measure $\mu$.
  Let $Z$ have law $\mu$, and consider $M = \Cov(\mu) = \Ev( Z Z^T)$.  By
  applying a linear map, we may assume this is a diagonal matrix
  $\diag(\lambda)$.

  Let $\phi(x) = |x|^{-2\alpha}$.  Using a Taylor expansion we have
  \[
  \frac{\phi(x+z)}{\phi(x)} = 1 - \frac{2\alpha x^T z}{|x|^2}
  - \frac{\alpha|z|^2}{|x|^2} + \frac{\alpha(\alpha+1)}{2}
  \frac{4x^Tzz^Tx}{|x|^4} + O(|x|^{-3}).
  \]
  Taking expectation (with $\Ev(Z)=0$) we get
  \begin{align*}
  \Ev\left( \frac{\phi(x+Z)}{\phi(x)}\right)
  &= 1 + \frac{\alpha}{|x|^4} \left(-\Ev |Z|^2 |x|^2 + 2(\alpha+1) x^T M x\right)
  + O(|x|^{-3}) \\
  &= 1 + \frac{\alpha}{|x|^4} \sum_i |x_i|^2 ( 2(\alpha+1) \lambda_i - \tr M)
  + O(|x|^{-3}) \\
  \end{align*}
  If
  \begin{equation}
    2\lambda_{\max} < \tr M     \label{eq:need}
  \end{equation}
  and $\alpha>0$ is sufficiently small then we get transience, since the
  sum is negative and dominates the error term.  We can truncate $\phi$ so
  that the inequality holds for small $x$ as well. Hence, transience follows from Lemma \ref{lem:transient_function}.

 Clearly \eqref{eq:need} is impossible for 2-dimensional matrices, so we need
  dimension at least 3.

  Note that if there are several increment laws $\mu_i$, the same $\phi$ may be
  super-harmonic for all of them simultaneously.  In that case, an
  arbitrary adapted choice of $\mu_i$ for the steps does not affect
  transience.

  For steps with a single law, we may consider instead the process
  $M^{-1/2} X$ which has $\Cov=I$, and \eqref{eq:need} holds.

  For a pair of matrices, we can always ensure $\eqref{eq:need}$, hence transience is guaranteed:

  \begin{claim}
    For any pair of $3\times3$ symmetric positive definite matrices
    $M_1,M_2$ there is an $A$ so that $A M_i A^T$ both satisfy
    \eqref{eq:need}.
  \end{claim}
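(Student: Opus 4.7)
My plan is to reduce the problem to a canonical form via simultaneous diagonalization, and then to exhibit a single explicit diagonal choice. First, since $M_1$ is positive definite, the matrix $N := M_1^{-1/2} M_2 M_1^{-1/2}$ is symmetric positive definite, so I would pick an orthogonal matrix $O$ with $O N O^T = D := \diag(d_1, d_2, d_3)$, sorted so that $0 < d_1 \le d_2 \le d_3$. Then $A_0 := O M_1^{-1/2}$ satisfies $A_0 M_1 A_0^T = I$ and $A_0 M_2 A_0^T = D$, so it suffices to find a further invertible matrix $B$ with both $B B^T$ and $B D B^T$ satisfying \eqref{eq:need}, and then set $A := B A_0$.

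Next I would look for $B = Q^{1/2}$ with $Q = \diag(q_1, q_2, q_3)$ and $q_i > 0$. In that case $B B^T = Q$ and $B D B^T = \diag(q_i d_i)$, so the problem reduces to choosing positive reals $q_i$ such that both triples $(q_1, q_2, q_3)$ and $(q_1 d_1, q_2 d_2, q_3 d_3)$ satisfy the triangle inequality (each entry is strictly less than the sum of the other two). My explicit proposal is
\[ q_1 = q_2 = \tfrac{1}{d_2}, \qquad q_3 = \tfrac{1}{d_3}. \]
The two triples then become $(1/d_2, 1/d_2, 1/d_3)$ and $(d_1/d_2, 1, 1)$. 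Using $d_1 \le d_2 \le d_3$, one checks quickly that the largest entries are $1/d_2$ and $1$ respectively, while the sum of the other two exceeds the largest by $1/d_3 > 0$ and $d_1/d_2 > 0$; both triangle inequalities thus hold strictly.

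The main obstacle I expect is identifying this $Q$: since the pair $(B B^T, B D B^T)$ has the generalized eigenvalues $\{d_1, d_2, d_3\}$ as an invariant, there is no hope of making both matrices multiples of the identity. The naive attempt $q_i = 1/d_i$, which yields $B D B^T = I$, forces $Q$ to have eigenvalues $\{1/d_1, 1/d_2, 1/d_3\}$, and the triangle condition $1/d_1 < 1/d_2 + 1/d_3$ can fail whenever $d_1$ is much smaller than $d_2, d_3$. The insight in the proposal above is to equalize the two potentially-largest entries of $B D B^T$ by setting $q_2 d_2 = q_3 d_3 = 1$ and then take $q_1 = q_2$, so that $Q$ itself has only a mild anisotropy (a single small eigenvalue rather than a single large one that would violate the triangle inequality).
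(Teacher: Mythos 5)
Your proof is correct and follows essentially the same route as the paper: both reduce via simultaneous diagonalization to the pair $(I, D)$ with $D$ diagonal, then apply a further diagonal conjugation designed to produce a repeated largest eigenvalue in one of the two resulting matrices (yours gives $\diag(1/d_2,1/d_2,1/d_3)$ and $\diag(d_1/d_2,1,1)$; the paper's $A=\diag(\sqrt{b/a},1,1)$ gives $\diag(b/a,1,1)$ and $\diag(b,b,c)$, the same solution up to overall scaling and relabeling of axes). Your version is in fact a touch more careful, as it makes explicit the ordering $d_1\le d_2\le d_3$ that the paper's choice implicitly relies on.
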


  To see this, first apply some $A$ to make $M_1$ the identity, next
  diagonalize $M_2$ by a unitary matrix, (thus keeping $M_1=I$).  If at
  this point $M_2=\diag(a,b,c)$ apply $A=\diag(\sqrt{b/a},1,1)$ to finish,
  as the matrices are now $\diag(b/a,1,1)$ and $\diag(b,b,c)$.
\end{proof}

\noindent{\bf Acknowledgement.} We are grateful to Omer Angel,  Jian Ding and Miki Racz for scribing some of these notes, and to Lucas Boczkowski and Perla Sousi for helpful corrections.

\bibliographystyle{amsalpha}
\bibliography{spb}

\providecommand{\bysame}{\leavevmode\hbox to3em{\hrulefill}\thinspace}
\providecommand{\MR}{\relax\ifhmode\unskip\space\fi MR }
% \MRhref is called by the amsart/book/proc definition of \MR.
\providecommand{\MRhref}[2]{%
  \href{http://www.ams.org/mathscinet-getitem?mr=#1}{#2}
}
\providecommand{\href}[2]{#2}
\begin{thebibliography}{LPW08}

\bibitem[AD86]{AD86}
David Aldous and Persi Diaconis, \emph{Shuffling cards and stopping times},
  Amer. Math. Monthly \textbf{93} (1986), 243--297.

\bibitem[AF02]{AF02}
David Aldous and James~Allen Fill, \emph{Reversible markov chains and random
  walks on graphs}, University of California, Berkeley, 2002.

\bibitem[BKS11]{BKS}
Ita{\i} Benjamini, Gady Kozma, and Bruno Schapira, \emph{A balanced excited
  random walk}, C. R. Math. Acad. Sci. Paris \textbf{349} (2011), no.~7-8,
  459--462. \MR{2788390 (2012d:60133)}

\bibitem[Der76]{Derr76}
Yves Derriennic, \emph{Lois Çz{\'e}ro ou deuxÈ pour les processus de {M}arkov.
  {A}pplications aux marches al{\'e}atoires}, Annales de l'institut Henri
  Poincar{\'e} (B) Probabilit{\'e}s et Statistiques \textbf{12} (1976), no.~2,
  111--129 (fre).

\bibitem[KP12]{KP12}
J\'ulia Komj\'athy and Yuval Peres, \emph{Mixing and relaxation time for random
  walk on wreath product graphs}, Electronic Journal of Probability \textbf{18}
  (2012), no. 71, 1--23.

\bibitem[KV83]{KV83}
Vadim~A. Ka{\u\i}manovich and Anatoly~M. Vershik, \emph{Random walks on
  discrete groups: boundary and entropy}, Ann. Probab. \textbf{11} (1983),
  no.~3, 457--490. \MR{704539 (85d:60024)}

\bibitem[LP13]{LP09}
James~R. Lee and Yuval Peres, \emph{Harmonic maps on amenable groups and a
  diffusive lower bound for random walks}, Ann. Probab. \textbf{41} (2013),
  no.~5, 3392--3419.

\bibitem[LP15]{LPbook}
Russell Lyons and Yuval Peres, \emph{Probability on trees and networks},
  Cambridge University Press, 2015, In preparation. Current version available
  at {\tt http://mypage.iu.edu/\string~rdlyons/}.

\bibitem[LPW08]{LPW08}
David~A. Levin, Yuval Peres, and Elisabeth~L. Wilmer, \emph{Markov chains and
  mixing times}, American Mathematical Society, 2008.

\bibitem[PPS13]{SPP12}
Yuval Peres, Serguei Popov, and Perla Sousi, \emph{On recurrence and transience
  of self-interacting random walks}, Bulletin of the Brazilian Mathematical
  Society, New Series \textbf{44} (2013), no.~4, 841--867 (English).

\bibitem[PR04]{PR04}
Yuval Peres and David Revelle, \emph{Mixing times for random walks on finite
  lamplighter groups}, Electron. J. Probab. \textbf{9} (2004), no. 26,
  825--845. \MR{2110019 (2005m:60007)}

\bibitem[Var85]{V85}
Nicholas~Th. Varopoulos, \emph{Long range estimates for {M}arkov chains}, Bull.
  Sci. Math. (2) \textbf{109} (1985), no.~3, 225--252. \MR{822826 (87j:60100)}

\end{thebibliography}

%\bibliographystyle{amsalpha}
%\begin{thebibliography}{A}

%\bibitem [A]{A} T. Aoki, \textit{Calcul exponentiel des op\'erateurs
%microdifferentiels d'ordre infini.} I, Ann. Inst. Fourier (Grenoble)
%\textbf{33} (1983), 227--250.

%\bibitem [B]{B} R. Brown, \textit{On a conjecture of Dirichlet},
%Amer. Math. Soc., Providence, RI, 1993.

%\bibitem [D]{D} R. A. DeVore, \textit{Approximation of functions},
%Proc. Sympos. Appl. Math., vol. 36,
%Amer. Math. Soc., Providence, RI, 1986, pp. 34--56.

%\end{thebibliography}

\end{document}